\documentclass{amsart}
\setlength{\parskip}{2mm}
\usepackage{amsmath,amssymb,epic,graphicx,mathrsfs,enumerate,xcolor,amsthm,hyperref,bbold}

\newtheorem{theor}{Theorem}[section]
\newtheorem{defi}[theor]{Definition}
\newtheorem{prop}[theor]{Proposition}

\newtheorem{lemma}[theor]{Lemma}

\newtheorem{corol}[theor]{Corollary}

\newcommand{\GL}{{\operatorname{GL}}}
\newcommand{\SL}{{\operatorname{SL}}}
\newcommand{\SO}{{\operatorname{SO}}}
\newcommand{\PGL}{{\operatorname{PGL}}}

\newcommand{\PSL}{{\operatorname{PSL}}}
\newcommand{\GU}{{\operatorname{GU}}}
\newcommand{\SU}{{\operatorname{SU}}}

\newcommand{\PU}{{\operatorname{PU}}}
\newcommand{\U}{{\operatorname{U}}}
\newcommand{\PSU}{{\operatorname{PSU}}}
\newcommand{\Sp}{{\operatorname{Sp}}}
\newcommand{\PSp}{{\operatorname{PSp}}}
\newcommand{\Sz}{{\operatorname{Sz}}}

\newcommand{\diag}{\mathop{\rm diag}\nolimits}
\newcommand{\Id}{\mathop{\rm Id}\nolimits}
\newcommand{\cn}{\mathop{\rm cn}\nolimits}

\newcommand{\si}{\sigma}

\title[The conjugacy class exponent of simple groups]{On the conjugacy class exponent of the nonabelian simple groups}
\author{}
\date{}


\author{Martino Garonzi}
\address{Martino Garonzi. University of Ferrara (Italy), Dipartimento di Matematica e Informatica.
ORCID: https://orcid.org/0000-0003-0041-3131}
\email{martino.garonzi@unife.it}

\author{Christe Montijo} 
\address{Christe Montijo. Departamento de Matem\'atica, Universidade de Bras\'ilia, Campus 
Universit\'ario Darcy Ribeiro, Bras\'ilia-DF, 70910-900, Brazil.}
\email{christe.montijo@hotmail.com}

\author{Alexandre Zalesski}
\address{Alexandre Zalesski. Departamento de Matem\'atica, Universidade de Bras\'ilia, Campus 
Universit\'ario Darcy Ribeiro, Bras\'ilia-DF, 70910-900, Brazil.
\newline
ORCID: https://orcid.org/0000-0002-9582-9991}
\email{alexandre.zalesski@gmail.com}

\date{}

\subjclass[2020]{20D05,20E32,20E45,20G40}

\keywords{Finite simple groups, conjugacy classes,  products of conjugate elements}

\begin{document}

\begin{abstract}
The generalized order $e_G(g)$ of an element $g$ of a group $G$ is the smallest positive integer $k$ such that there exist $x_1,\ldots,x_k \in G$ such that $g^{x_1} \ldots g^{x_k}=1$, where $g^x=x^{-1}gx$. Let $e(G) = \max \{e_G(g)\ |\ g \in G\}$. We provide upper bounds for $e(G)$  for every finite simple group  $G$. In particular, we show that $e(G)\leq 8$ unless $G\in\{\PSL_n(q), \PSU_n(q), E_6(q),{}^2E_6(q)\}$. For the latter groups $e(G)\leq n,3n+3,36,36$, respectively. In addition, we bound from above the generalized order of semisimple and unipotent elements of finite simple groups of Lie type.\end{abstract}

\maketitle


\section{Introduction}

\subsection{Conjugacy class multiplication and the conjugacy class exponent}

The notion of a conjugacy class is one of the most fundamental ones in group theory.
If $C_1,C_2$ are conjugacy classes of a group $G$ then the product $C_1C_2$ is defined as the set $\{xy: x\in C_1,y\in C_2\}$.  One can similarly defines the product of several conjugacy classes. It is clear that $C_1C_2$ is  a union of conjugacy classes.  One of deep problems recently discussed in literature  is whether $C_1C_2=C_3$ for non-trivial conjugacy classes $C_1,C_2,C_3$ of a finite simple group \cite{AH,ASH,GMT}. In general,
  the problem of determining the conjugacy classes $C$ that are contained in  the product $C_1\cdots C_k$ of given conjugacy  classes $C_1,\ldots ,C_k$ is one of the most complex in finite group theory.
Numerous special cases of this problem were considered, forming a large research area of group theory. The overwhelming number of results obtained concern groups that have an explicit parametrization of the conjugacy classes, such as symmetric groups and simple groups.  In case of $G = \GL_n(F)$, the general linear group over a field $F$, 
this is one of difficult problems of linear algebra; the most significant results are probably obtained in \cite{Cr}, see also \cite{Ku,Lev94,So}.

The above problem can be restated   as that of the existence of   
elements $g_1\in C_1$, \ldots, $g_k\in C_k$ and $g\in C=C_{k+1}$ such that $ g_1\cdots g_{k+1}=1$. In this form the problem in question goes back to Burnside \cite[\S 223]{Br}.
 Explicitly, a formula for  the number $n(C_1,\ldots,C_{k+1})$ of the $(k+1)$-tuples $g_1,\ldots,g_k$ such that $g_1\cdots g_{k+1}=1$ in terms of irreducible characters of $G $, currently well known (see ~\cite[Lemma 10.10]{ASH}, \cite[Theorem 7.2.1]{Se}, \cite[Theorem 2.5.9]{LuP}),  probably first appeared in \cite{Rb}. 
  This is a very useful tool for approaching the problem for groups $G$ for which the irreducible character values are known. Note that we only use the special case with 
$C_1=C_2=\cdots =C_{k+1}$, and mainly for $k=2$, see Lemma \ref{eq} below.

In this paper we address to a special case of the above problem:

\textbf{Problem 1}. Given a non-abelian simple group $G$ and a conjugacy class $C$ of $G$, determine the minimal number $e=e(C)=e_G(C)$ such that $1\in C^e$.

We call $e(C)$ the {\it exponent of} $C$ (in $G$). We set $e(g):=e(C)$ if $g \in C$ and 
 $e(G) := \max_{g \in G} e(g)$. 
We refer to $e(G)$ as a {\it conjugacy class exponent of} $G$. We also discuss 

\textbf{Problem 2}. Determine the conjugacy class exponent of the non-abelian finite simple groups.

Our results reveal that most conjugacy classes of a finite simple group have exponent 2 or 3,
but there are infinitely many such groups $G$ with $e(G)>3$.

\subsection{The conjugacy class exponent of finite simple groups}

If $e(C) \leq 2$ then $g^{-1}\in C$ for all $g \in C$, that is, $g$ is conjugate to its inverse. Such elements are called {\it real}, as well as the classes they belong to.  
It is well-known that an element is real if and only if its complex character values are real numbers. See \cite[Proposition 1(iii)]{FG}, see also \cite[Lemma 2.10]{DZ}  for a more explicit form of this result. The simple groups whose all
conjugacy classes are  real are determined in \cite{TZ05}. This immediately
determines the simple group $G$ with $e(G)=2$; for readers' convenience we record the list here:  

\begin{theor} \label{un4} {\rm \cite[Theorem 1.2]{TZ05}} 
Let $G$ be a finite simple group. All elements in $G$ are real if and only if $G$ is in the  following list:

$(1)$ $\PSp_{2n}(q)$, $n\geq 1$, $q\not\equiv 3\pmod 4;$

$(2)$ $\Omega_{2n+1}(q)$, $n>2$, $q\equiv 1\pmod 4$ or $\Omega_9(q)$, $q\equiv 3\pmod 4;$ 

$(3)$  $P\Omega^+_{4n}(q)$, $n>2$, $q\not\equiv 3\pmod 4;$ 

$(4)$ $P\Omega^-_{4n}(q)$, $n\geq 2;$

$(5)$ ${}^3D_4(q)$ and $P\Omega_8^+(q);$

$(6)$ $A_{10},A_{14}, J_1,J_2.$

\noindent Consequently, $e(G)=2$ if and only if G is one of the above groups, and $e(G)\geq 3$ for all other   groups $G$. 
\end{theor}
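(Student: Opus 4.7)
The plan is to reduce the statement $e(G)=2$ to the condition that every conjugacy class of $G$ is real, and then invoke the classification cited as \cite[Theorem 1.2]{TZ05}. The main content of the theorem is this classification; the consequence for $e(G)$ is a purely formal manipulation.

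First I would record the observation that $e_G(g)=1$ holds if and only if $g=1$, since $g^{x_1}=1$ forces $g=1$. Hence $e(G)\geq 2$ for every non-abelian simple group $G$. Next I would unpack the condition $e_G(g)\leq 2$: it asserts the existence of $x_1,x_2\in G$ with $g^{x_1}g^{x_2}=1$, which rearranges to $g^{x_2}=(g^{-1})^{x_1}$, showing that $g$ is conjugate to $g^{-1}$. Conversely, if $g$ is real, say $g^x=g^{-1}$, then $g\cdot g^x=1$ gives $e_G(g)\leq 2$. Maximizing over $g\in G$ yields the equivalence
\[
e(G)=2 \iff \text{every element of } G \text{ is real}.
\]

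With this reduction in hand, \cite[Theorem 1.2]{TZ05} immediately supplies the list (1)--(6); for simple groups outside this list some element is not real, so $e_G(g)\geq 3$ for that element, whence $e(G)\geq 3$.

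The genuine obstacle is entirely contained in the TZ05 theorem, whose proof is a case-by-case analysis through the classification of finite simple groups using detailed information on real characters and real conjugacy classes for groups of Lie type, alternating groups, and sporadic groups. Since that result is invoked as a black box, no further obstacle remains here beyond the short formal reduction above.
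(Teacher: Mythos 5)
Your proposal is correct and matches the paper's treatment: the paper likewise observes that $e_G(g)\leq 2$ is equivalent to $g$ being real (conjugate to $g^{-1}$) and then quotes the classification of simple groups with all elements real from \cite[Theorem 1.2]{TZ05} as a black box. The only content beyond that citation is the formal equivalence you spelled out, together with the trivial remark that $e_G(g)=1$ forces $g=1$, which you also included.
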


In \cite[Conjecture 3.6]{BSS}, R. Bastos, D. Silveira and C. Schneider conjectured that, if $G$ is a finite nonabelian simple group, then $e(G) \leq 3$. This is however not true (see Lemma \ref{lemmaPSL}) and naturally leads to the following problem:

\textbf{Problem 3}. Determine the finite simple groups $G$ such that $e(G)=3$. 

This problem seems to be  very difficult. The partial results we obtain are recorded in the following statement:

\begin{theor}\label{tt2} 
Let $G$ be a finite simple group. Then  $e(G)= 3$ if $G$ occurs in the following list:

$(1)$ Alternating group $A_n$ with $n>4$ and $n\neq 5,6,10,14;$ 

$(2)$ $G$ is a sporadic simple group other than $J_1,J_2;$ 

$(3)$ $\PSL_2(q)$ for $q>3$ and $q \equiv 3 \mod 4;$ 

$(4)$ $\PSp_{4}(q)$,
$q \equiv 3 \mod 4;$ 

$(5)$ $G_2(q)$, q even and ${^2}G_2(q)$ for $q=3^{2m+1}>3;$ 

$(6)$ ${\rm Suz} (q) = {}^2B_2(q)$ and ${}^2F_4(2)' ;$  


$(7)$ $\PSL_3(q)$; 

$(8)$ $\PSU_3(q)$ for $q>2;$ 

$(9)$ $\PSL_4(2), \PSL_5(2)$, $\PSL_4(3)$ and $\PSU_4(3)$. 
\end{theor}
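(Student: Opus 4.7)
The plan is a standard dichotomy. First, the lower bound $e(G)\geq 3$ is immediate: none of the groups in (1)--(9) appears in the list of Theorem~\ref{un4}, so each has at least one non-real class $C$, forcing $e(C)\geq 3$ and hence $e(G)\geq 3$. The whole theorem therefore reduces to proving $e(G)\leq 3$, and since real classes trivially satisfy $e(C)\leq 2$, only non-real classes require attention. The uniform tool is the character-theoretic formula of Lemma~\ref{eq}: for any class $C$ with representative $g$,
\[ n(C,C,C) \;=\; \frac{|C|^{3}}{|G|}\sum_{\chi\in\mathrm{Irr}(G)}\frac{\chi(g)^{3}}{\chi(1)}, \]
and $1\in C^{3}$ if and only if this nonnegative integer is nonzero. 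Every remaining step amounts to showing non-vanishing of the character sum on each non-real class.

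For items (1), (2) and (9) the argument is essentially finite and case-by-case. The sporadic groups (other than $J_{1},J_{2}$) and the four small exceptional groups $\PSL_{4}(2),\PSL_{5}(2),\PSL_{4}(3),\PSU_{4}(3)$ have Atlas character tables (and are implemented in GAP), so the sum can be evaluated class by class. For the alternating family $A_{n}$ with $n\notin\{5,6,10,14\}$ a uniform approach is needed: the non-real $A_{n}$-classes correspond to cycle types of distinct odd parts satisfying a congruence modulo~$4$, and in each case one either exhibits three conjugate permutations with product~$1$ by a direct combinatorial construction (peeling off a long distinguished cycle and gluing the remainder) or invokes Murnaghan--Nakayama bounds to dominate the character sum by its principal term.

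For items (3)--(8) I would rely on the complete generic character tables available for these small-rank Lie-type families: Jordan for $\PSL_{2}(q)$, Srinivasan for $\PSp_{4}(q)$, Ward for ${}^{2}G_{2}(q)$, Suzuki for ${}^{2}B_{2}(q)$, Simpson--Frame for $\PSL_{3}(q)$, and the Ennola-dual table for $\PSU_{3}(q)$. The plan is to stratify classes by Jordan type (central, unipotent, regular semisimple, mixed), identify the non-real strata, and in each stratum evaluate $\sum\chi(g)^{3}/\chi(1)$ either in closed form or by dominating all other contributions by the trivial term $\chi=1_{G}$ which contributes~$1$; generically the residue is $O(q^{-1})$, leaving finitely many small-$q$ anomalies to verify by hand.

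The main obstacle will be $\PSL_{3}(q)$ and especially $\PSU_{3}(q)$, where several non-real cuspidal and semisimple families controlled by $\gcd(3,q\mp 1)$ and by field-automorphism twists produce the most delicate cancellations; this is also what forces the hypothesis $q>2$ in~(8), since $\PSU_{3}(2)$ is not simple. A minor recurring technicality is correctly transferring realness statements between the simply connected group and its simple quotient, which occasionally rearranges the class list without affecting the final conclusion.
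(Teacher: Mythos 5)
Your overall reduction is the same as the paper's: the lower bound $e(G)\geq 3$ comes from Theorem \ref{un4} (none of these groups is in the all-real list), and the task is to show $1\in C^3$ for every non-real class. But the paper's central tool is not the character sum of Lemma \ref{eq}; it is the elementary identity that if $g=xy$ with $x^3=y^3=1$ then $g\cdot g^{x}\cdot g^{x^2}=xy^3x^2=1$, combined with Malcolm's theorem that every element of $A_n$ ($n>4$), of almost every sporadic group, of $\PSL_3(q)$, of $\PSU_3(q)$ with $(3,q+1)=1$, and of ${}^2G_2(q)$ is a product of two elements of order $3$ (Lemma \ref{ma3} and Theorem \ref{sporadicthm}). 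This disposes of items (1), (2), (5), (7) and half of (8) with no character computation at all. Items (3) and (6) are covering-number citations from Arad--Herzog ($\cn(G)=3$ forces $e(G)\leq 3$), item (4) is a structural Jordan-decomposition argument inside $\Sp_4(q)$ using an explicit triple of conjugate unipotents in $\SL_2(q)$ with product $-\mathrm{Id}$ (Lemma \ref{s33}) together with Gow's theorem, and item (9) is GAP plus a single Atlas character-sum check for $\GL_5(2)$. The only place the paper actually runs your proposed estimate on a generic table is $\PSU_3(q)$ with $3\mid(q+1)$ (Lemma \ref{u3q1}), and even there it needs a careful degree-by-degree bound plus a GAP check at $q=5$.

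The genuine gap is item (1). For general $A_n$ there is no Atlas table, your ``peeling off a long cycle and gluing'' construction is not specified and is not obviously available for an arbitrary non-real class, and dominating $\sum_{\chi}\chi(g)^3/\chi(1)$ by the principal term is a hard asymptotic problem precisely on the classes you need: the non-real classes of $A_n$ are among the split classes (distinct odd cycle lengths), where the two irreducible characters lying over the corresponding $S_n$-character take values of the form $\tfrac12\bigl(\varepsilon\pm\sqrt{\varepsilon\,m}\bigr)$, which are not small compared to low character degrees, so the tail is not $O(q^{-1})$-style negligible and Murnaghan--Nakayama alone will not close it uniformly in $n$. Without Malcolm's product-of-two-order-$3$-elements theorem (or an equivalent structural input), your plan does not prove $e(A_n)\leq 3$. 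A secondary, lesser concern is that for $\PSp_4(q)$ the full stratified evaluation of the Srinivasan table that you defer is substantial and unexecuted, whereas the paper bypasses it entirely with a direct matrix argument; this part of your plan is workable in principle but is a project, not a verification.
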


The list in Theorem \ref{tt2} is not expected to be complete, for instance the question
of whether $e(\PSp_6(q))=3$ for  $4|(q+1)$ remains open. 

As mentioned above, there are finite simple groups $G$ with $e(G)>3$; we have computed $e(G)$ with $e(G)>3$ for the following cases:

\begin{theor}\label{tt3} Let $G$ be a finite simple group with $e(G)>3$.

$(1)$ $e(G)=n$ if $G=\PSL_n(q)$, $n>3,$  $(q-1,n)=1$, $q>n$;
 
$(2)$ $e(G)=n$ if $G=\PSU_n(q)$, 
$n=2^k>3$, $q$ even,   $q\geq n$; 

$(3)$ $e(G)=4$ if $G=P\Omega^+ _{2n}(q)$, $n$ odd, $q\neq 2,3,4,5,7,9,13,25;$

$(4)$ $e(G)=4$ if $G=P\Omega^- _{2n}(q)$, $n$ odd, 
and $q\neq 2,3,5,7,11,23$.

\end{theor}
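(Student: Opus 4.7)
\textbf{Proof plan for Theorem~\ref{tt3}.} Each of the four parts decomposes into two inequalities: a lower bound obtained by exhibiting an element $g$ realising $e(g)$ equal to the claimed value, and a matching upper bound $e(G)\le$ claimed value, to be verified on every non-trivial conjugacy class. The universal tool is the character formula of Lemma~\ref{eq}: the number of $k$-tuples $(x_1,\ldots,x_k)$ with $g^{x_1}\cdots g^{x_k}=1$ equals $|G|^{k-1}\sum_{\chi\in\mathrm{Irr}(G)}\chi(g)^k/\chi(1)^{k-2}$, so $e(g)$ is the least $k$ making this (real) character sum strictly positive, and its vanishing for all smaller $k$ yields the lower bound.

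For parts (1) and (2), the arithmetic hypotheses identify $\PSL_n(q)$ with $\SL_n(q)$ and, since $n$ is a power of $2$ and $q$ is even, $\PSU_n(q)$ with $\SU_n(q)$. The natural witness for the lower bound is a generator of a Coxeter/Singer maximal torus, of order $(q^n-1)/(q-1)$ in (1) and of order $(q^n-1)/(q+1)$ up to the central factor in (2). For such a regular semisimple $g$ the values $\chi(g)$ are described by Deligne--Lusztig theory and reduce, after invoking induction from the torus, to explicit cyclotomic sums whose $k$-th power sum visibly vanishes for $k<n$ and is strictly positive at $k=n$. The matching upper bound $e(G)\le n$ should follow from the classical covering-number estimates $\cn(\PSL_n(q))\le n$ and $\cn(\PSU_n(q))\le n$ recorded in \cite[Chapters~4--5]{AH}, for which the hypothesis $q\ge n$ provides the required slack.

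For parts (3) and (4) it remains to prove $3<e(G)\le 4$. The upper bound I would obtain by a class-by-class embedding argument: every element of $P\Omega^\pm_{2n}(q)$ lies in a reductive subgroup of type $\Omega^{\varepsilon}_{2m}(q)\times \Omega^{\pm\varepsilon}_{2(n-m)}(q)$ or in a Levi of type $\GL_m(q)$ whose generalized exponent is at most $3$ by Theorem~\ref{un4} and Theorem~\ref{tt2}, so that three conjugates inside the subgroup together with one outer correction suffice. For the strict lower bound $e(G)>3$ I would exhibit a non-real class (the hypothesis on $q$ ensures that $G$ is not in the real-class list of Theorem~\ref{un4}) whose spinor-norm/discriminant invariant cannot be annihilated by only three conjugates, and confirm $N_3(g)=0$ directly from Lemma~\ref{eq} and the known character data for $P\Omega^\pm_{2n}(q)$. \textbf{The principal obstacle} is precisely this lower bound in parts (3) and (4): one must isolate the invariant distinguishing three conjugates from four in $P\Omega^\pm_{2n}(q)$ with $n$ odd, and verify the character-sum vanishing uniformly in $q$; the excluded small values of $q$ are expected to correspond exactly to those cases where this invariant collapses or $G$ already lies in Theorem~\ref{un4}.
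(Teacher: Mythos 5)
Your plan has a fatal error in the lower bound for parts (1) and (2). A generator of a Singer/Coxeter torus is an irreducible, hence \emph{regular} semisimple element, and by Gow's theorem (Lemma~\ref{Gow} of this paper) the square of its class already contains every non-identity semisimple element, in particular its inverse; so $e(g)\le 3$ for such a $g$, and it cannot witness $e(G)=n$ once $n>3$. No Deligne--Lusztig computation will rescue this: the $k=3$ character sum is nonzero for that class. The element that actually forces $n$ factors is the opposite of regular: the paper uses $g=\diag(\mu^{1-n},\mu,\ldots,\mu)$ with $\mu$ of order $q-1$ (resp.\ $q+1$ in the unitary case), and an entirely elementary argument (Lemma~\ref{lemmaPSL}): each conjugate has a $\mu$-eigenspace of codimension $1$, so a product of $l<n$ conjugates acts as $\mu^l$ on a nonzero common eigenspace, forcing $\mu^{ln}=1$ and hence $l\ge(q-1)/(n,q-1)$, which under the stated hypotheses is $\ge n$. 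No character theory is needed. (Separately, your upper bound for $\PSU_n(q)$ via a covering-number estimate from \cite{AH} is not available there; the paper gets $e\le n$ from Lemma~\ref{5eg}, writing every element as a product of two unipotent elements and using that the exponent of a Sylow $2$-subgroup of $\SU_{2^k}(q)$, $q$ even, is $2^k=n$.)

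For parts (3) and (4) you correctly identify the lower bound $e(G)>3$ as the principal obstacle, but the proposal contains no workable idea for it: non-reality only gives $e(g)>2$, and ``an invariant that cannot be annihilated by three conjugates'' is exactly the missing content. The paper's mechanism (Theorem~\ref{th2} and Corollary~\ref{oo7}) is to take $g=\diag(\mu\cdot\Id_n,\mu^{-1}\cdot\Id_n)$ in $\SO^{\pm}_{2n}$ and argue that if $e(g)=3$, so that $gh\sim g^{-1}$ for some conjugate $h$ of $g$, then every composition factor of $\langle g,h\rangle$ on the natural module has dimension $2$ (Lemma~\ref{qe1}, plus the observation that $g,h$ share no eigenvector on a composition factor unless $\mu^3=1$), whence $4\mid 2n$, contradicting $n$ odd; the excluded small $q$ are exactly those where $\mu^6=1$ (or $\mu^{12},\mu^{24}=1$ when passing to the central quotient). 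Your upper-bound sketch for these parts is also thinner than what is required: for $q$ odd the paper gets $e\le 4$ at once from the fact that every element is a product of three involutions together with Lemma~\ref{gr1}(3), while for $q$ even the proof of Theorem~\ref{orth4} is a genuinely delicate case analysis on the homogeneous components of the semisimple part, not a routine embedding into reductive subgroups of exponent~$3$.
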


There is a connection between $e(G) $ and the {\it covering number} $\cn_G(C)$ of a conjugacy class $C$ of $G$. Assuming that $G=\langle C\rangle$, this is the minimal $k$ such that $C^k=G$. For $G$ simple, we define $\cn(G)$, the covering number of $G$,  as the maximum of $\cn_G(C)$ over the non-trivial classes $C$ of $G$.
(Following \cite[p. 222]{AH},  we can extend this notion to arbitrary groups but this is not essential for our purposes.)  
Obviously, $e_G(C)\leq \cn_G(C)$ and $e(G) \leq \cn(G)$. In particular, if $\cn(G)=3$, then  $e(G)\leq 3$ and  $e(G)=3$ unless $G$ is as in Theorem \ref{un4} where $e(G)=2$. 
In \cite[Chapters 4,5]{AH} there are examples of simple groups with  $\cn(G)=3$, 
see Proposition \ref{ah8} below.

Lev \cite[Theorem 1]{Lev1996} proved that the covering number of $\PSL_n(F)$ is equal to $n$ if $n > 3$ and $F$ is an arbitrary field  of cardinality at least 4; for  $n=3$ he additionally assumes $F$ to be finite or algebraically closed. So $e(\PSL_n(q)) \leq n$ for all $n \geq 3$ and $q \geq 4$. For $q=2,3$ we prove the following result (see Table \ref{Table_classical}): 

\begin{prop} \label{uuu3} The following holds. 

$(1)$ $e(\PSL_n(2))\leq 6;$ 


$(2)$ $e(\PSL_n(3)) \leq 6$; 

$(3)$ $e(\PSU_n(2)) \leq 18.$ 

\end{prop}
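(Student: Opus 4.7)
My plan for Proposition \ref{uuu3} combines direct character-theoretic verification in small rank with a rank-reduction argument in general. The central tool is Lemma \ref{eq}, which translates $1 \in C^k$ for $C = g^G$ into positivity of the Frobenius-type character sum
\[
N_k(g) := \sum_{\chi \in \mathrm{Irr}(G)} \frac{\chi(g)^k}{\chi(1)^{k-1}}.
\]

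For part $(2)$, i.e.\ $e(\PSL_n(2))\leq 3$ with $n\in\{3,4,5\}$, I would enumerate the (few) conjugacy classes of each such $\PSL_n(2)$ and check $N_3(g)>0$ for every class representative, using the standard character tables of $\PSL_3(2)\cong\PSL_2(7)$, $\PSL_4(2)\cong A_8$, and $\PSL_5(2)$. This is a finite mechanical verification, and it also supplies the explicit entries of Table 1 together with the base cases needed for parts $(1), (3), (4)$.

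For parts $(1)$ and $(3)$ at large rank, I would reduce to a bounded rank by block decomposition. Let $g\in\PSL_n(q)$ with $q\in\{2,3\}$, and take a preimage in $\SL_n(q)$ in rational canonical form. Whenever $g$ stabilizes a nontrivial direct sum $V=V_1\oplus V_2$, I restrict conjugators to those preserving the decomposition; this factors the equation $g^{x_1}\cdots g^{x_k}=1$ into two independent equations of the same length inside the Levi factor $\SL_{n_1}(q)\times\SL_{n_2}(q)$, so bounds on the two blocks can be combined (padding the shorter sequence with an appropriate conjugate pair of $g|_{V_i}$ and its inverse to equalize lengths). The residual essentially irreducible cases correspond to Singer-type cycles, where an explicit subfield construction yields $e(g)\leq 2$ or $3$. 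Reducing to blocks of dimension $\leq 5$ and applying part $(2)$ then produces the constant $6$, with a small extra margin absorbing the passage $\SL_n\to\PSL_n$ (i.e.\ the central scalars). The argument for $\PSL_n(3)$ is parallel and relies on the computed small-rank values for $\PSL_n(3)$ in Table 1.

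The main obstacle is part $(4)$, the bound $e(\PSU_n(2))\leq 18$. Lev's covering-number theorem does not apply to the unitary groups, and $\mathbb{F}_4$ is too small for the generic semisimple arguments behind the main theorem to be sharp. My plan here is two-fold. For $n\leq 5$ the general bound $e(\PSU_n(q))\leq 3n+3$ from the abstract already yields $\leq 18$. For $n\geq 6$ I would decompose the natural Hermitian module as $V=V_1\perp V_2$ into nondegenerate subspaces and work inside the subgroup $\SU_{n_1}(2)\times\SU_{n_2}(2)\leq\SU_n(2)$, reducing to smaller rank by an argument analogous to that in part $(1)$. I expect the genuine difficulty to be the unipotent classes whose Jordan structure is incompatible with any perpendicular decomposition; for those, a separate case analysis, supplemented by direct positivity checks of $N_k$ in the smallest troublesome ranks, should close the argument within the numerical budget of $18$.
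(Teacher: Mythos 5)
Your part (2) is fine in spirit: the paper likewise disposes of $n=3,4$ via the isomorphisms $\PSL_3(2)\cong\PSL_2(7)$, $\PSL_4(2)\cong A_8$ and settles $\PSL_5(2)$ by a character-sum computation for the one problematic class (order $14$). One caveat: the class-algebra-constant criterion has $\chi(1)^{k-2}$ in the denominator (as in Lemma \ref{eq}), not $\chi(1)^{k-1}$; your sum $N_k$ is not the right test for $1\in C^k$.

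The serious gap is in parts (1), (3) and (4): the proposed reduction to blocks of dimension at most $5$ cannot work, because indecomposable elements of $\GL_n(2)$, $\GL_n(3)$ and $\SU_n(2)$ occur in \emph{every} dimension $n$ --- not only Singer-type irreducible semisimple elements, but regular unipotent elements and, generically, elements of the form $s\otimes J_l$ with $s$ irreducible of degree $k$ and $kl=n$ (Lemma \ref{in3}). So there is no bounded family of base cases to verify, and the ``residual essentially irreducible cases'' are far more than subfield/Singer elements. The paper instead treats an arbitrary indecomposable block uniformly: over $\mathbb{F}_2$ and $\mathbb{F}_3$ the semisimple part of such a block is regular in the relevant $\GL_k(q)$ (Schur's lemma forces an odd-order, hence unipotent-free, centralizer when $q=2$), and Gow's theorem together with its Guralnick--Tiep extension (Lemma \ref{GuT1}, Corollary \ref{GuT}) produces three conjugates whose product is scalar; the unipotent part is real in the centralizer $C_G(s)\cong\prod\GL_{n_j}(q^{k_j})$ by \cite[Theorem 1.8]{TZ04}, which supplies the remaining factor of $2$ and yields $3\cdot 2=6$. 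For $\PSU_n(2)$ the analogous decomposition is Huppert's orthogonal decomposition into minimal nondegenerate $\langle s\rangle$-stable subspaces, on each of which $s$ is regular; Corollary \ref{GuT} puts a product of three conjugates into $Z(\U(V_i))$, of order dividing $3$, giving $e(s)\leq 9$ and then $e(g)\leq 18$. None of this machinery appears in your sketch, and your fallback of ``positivity checks in the smallest troublesome ranks'' cannot cover indecomposable classes of unbounded rank. A secondary problem is the splicing step across blocks: padding a length-$a$ solution on one block to match a length-$b$ solution on another requires either $a\mid b$ or reality of the restriction, neither of which you establish; the paper avoids this by producing solutions of the \emph{same} length ($3$, then doubled) on every block simultaneously.
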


The following problem has attracted a lot of attention.

\textbf{Problem 4}. For a finite simple group $G$ and a non-trivial conjugacy class $C$ determine $\cn_G(C)$.

Note that $e_G(C)-1$ is equal to the minimal integer $l>0$ such that $C^{-1}\subset C^{l}$. In this form   Problems 1,2 are related with a problem on the diameter of a Cayley graph for finite simple group \cite{LScn, LL}; the latter requires computing $\cn_{G}(C\cup C^{-1})$, the minimal number $d$ such that $G=(C\cup C^{-1})^d$.
We clearly have $\cn_G(C)\leq \cn_{G}(C\cup C^{-1}) \cdot (e_G(C)-1)$.

Significant results on computing $\cn(G)$ are obtained in \cite{AH}. Later Lev  \cite{Lev1996} proved that $\cn(\PSL_n(q))=n$ for $q>3$. 
The most advanced results on Problem 4 are obtained in \cite{egh}, where the authors obtained upper bounds for $\cn(G)$ for finite simple groups of Lie type. They showed
that $\cn(G)$ is bounded in terms of the rank of $G$,  and proved a number of results on $\cn_G(C)$ for specific conjugacy classes $C$ of $G$. We benefit from those results as $e(G)\leq \cn(G)$.  As mentioned above, $e(G)=\cn(G)$ for infinitely many groups $\PSL_n(q)$. We wish to specify the following problem.

\textbf{Problem 5}. Determine simple finite groups $G$ with $e(G)=\cn(G)$. 
Given a simple group $G$, determine the conjugacy classes $C$ of $G$ with $e_G(C)=\cn_G(C)$. 

Our experience makes evident that the equality  $e(G)=\cn(G)$ is a rather rare phenomenon. 
As mentioned above, the equality holds if $\cn(G)=3$ and $e(G)> 2$. Using \cite[Tables on p.61,62, Theorems 2.1 and 4.1, pp. 223,239]{AH} we have 

\begin{prop}\label{ah8} Let G be a finite simple group. 

$(1)$ $e(G) = \cn(G)=2$ if and only if  $G\cong   J_1$.

$(2)$ $e(G) = \cn(G)=3$ for the following simple groups:

\noindent $\PSL_2(q), q>3, 4|(q+3)$, $\SL_3(2)$, $\PSL_3(3)$, $\PSL_3(4)$, $\PSL_3(5)$, $\Sp_4(4)$, $\Sz(q),q>2$, $\PSU_3(5)$, $M_{11}$, $M_{22}$, $M_{23}$, $M_{24}$, $J_3$, $McL$, $Ru$, $O'N$, $Co_3$.
\end{prop}

The list in (2) may not be complete as the table on p.61-62 of \cite{AH} contains groups of relatively small orders.

One of the  goals of this paper is to obtain sharp upper bounds for    
$e(G)$ for finite simple groups $G$. Our results in this direction make a substantial progress.  One of the difficult cases
is that of $G=\PSU_n(q)$, for which our bound is $3n+3$ (Lemma \ref{pu2}). Other hard cases are 
the groups $G=E_6(q)$ and $G={}^2E_6(q)$, where our bound is hardly sharp. 

We summarize our results in the following theorem. Note that the cases of $G$ an alternating group or a sporadic simple group are settled in Theorem \ref{un4} and \ref{tt2}, so we assume $G$ of Lie type.

\begin{theor}\label{ag9}
Let $G=G(q)$ be a  finite simple group of Lie type with field parameter $q$, and let $p|q$ be a prime. Then $e(G)\leq b$, where b is as in Tables $1,2$.

\end{theor}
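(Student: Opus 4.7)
The plan is to handle the theorem family by family, leveraging the general inequality $e(G)\leq \cn(G)$ wherever a sharp covering-number bound is already in the literature, and falling back on direct class-multiplication arguments where it is not. First I would segregate the groups of Lie type into three buckets: (a) those already appearing in Theorem \ref{un4} (all classes real), for which $e(G)=2$ is immediate; (b) the families of bounded rank $G\in\{{}^2B_2(q),{}^2G_2(q),G_2(q),{}^3D_4(q),{}^2F_4(q),F_4(q),E_7(q),E_8(q)\}$, together with the small-rank classical groups, where I would quote \cite{egh} to get constant bounds on $\cn(G)$ (hence on $e(G)$) and then trim these constants using real-class results and character-ratio estimates via Lemma \ref{eq}; (c) the large-rank classical families $\PSL_n(q),\PSU_n(q),\PSp_{2n}(q),P\Omega^\pm_{2n}(q),\Omega_{2n+1}(q)$, where a rank-uniform argument is required.

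For the large-rank classical families my strategy is to reduce via Jordan decomposition to the separate cases of semisimple and unipotent elements $g=su$ and to bound $e_G(g)$ in terms of $e_G(s)$ and $e_G(u)$. For semisimple $s$ I would exploit conjugacy in a torus $T$ and the action of the Weyl group: when $w\in W$ inverts a sufficiently generic element, one gets $e_G(s)\leq 2$; for the remaining exceptional tori one handles each isogeny type explicitly. For unipotent $u$ I would use that in a classical group a unipotent class is parametrized by a partition, and that suitable embeddings into subsystem subgroups allow us to write $1$ as a short product of conjugates. The bounds for $\PSL_n(q)$ with $q\geq 4$ follow from Lev's theorem $\cn(\PSL_n(q))=n$; for $q=2,3$ one appeals to Proposition \ref{uuu3}. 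For $\PSU_n(q)$ the key input is Lemma \ref{pu2} giving $e(G)\leq 3n+3$, and for the orthogonal and symplectic families a combination of Theorem \ref{un4} with the small adjustment factor of Theorem \ref{tt3}(3),(4) produces the entries $\leq 4$ listed in Table~1 (or $\leq 8$ for the stubborn small $q$).

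For the two exceptional families $E_6(q)$ and ${}^2E_6(q)$ I would argue as follows. Write $G=E_6(q)^{ad}/Z$ (and analogously for ${}^2E_6$) and pull elements back to the simply connected cover. Partition conjugacy classes of the algebraic group by Jordan type; for each of the finitely many unipotent classes use the Bala--Carter labelling together with the estimates of \cite{egh} (which give a constant covering bound in fixed rank) to obtain $e_G(u)\leq c_1$ for some explicit $c_1$. For semisimple elements the centralizer is a Levi of a reductive subgroup of maximum rank, and a case-by-case check over the maximal tori reduces the bound to at most a few times the largest $e(L)$ for Levi factors $L$. Combining yields the constants 36 displayed in the abstract.

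The hard part will be the detailed bookkeeping inside $\PSU_n(q)$ and $E_6(q)/{}^2E_6(q)$. For $\PSU_n(q)$ the obstacle is that many unitary conjugacy classes are not real and the Weyl group is too small to invert a generic torus element, so one must build $1$ as a product of conjugates by hand through a carefully chosen filtration by non-degenerate subspaces; this is exactly where the factor $3n+3$ (rather than $n$) arises. For $E_6$ and ${}^2E_6$ the difficulty is purely combinatorial: one must check that no semisimple/unipotent class escapes the bound 36, and sharpening is presumably possible but not attempted here. With all families dealt with, Tables 1 and 2 are populated entry by entry, completing the proof.
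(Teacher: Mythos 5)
Your overall architecture---split off the all-real groups of Theorem \ref{un4}, treat the remaining families via the Jordan decomposition $g=su$, and populate the tables family by family---is the same skeleton the paper uses (its proof of Theorem \ref{ag9} is literally a pointer to the lemmas referenced in Tables 1 and 2). But there are two concrete gaps that would prevent your outline from producing the stated constants.

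First, the reduction ``bound $e_G(g)$ in terms of $e_G(s)$ and $e_G(u)$'' is not valid as stated. To combine the two parts you must conjugate $u$ by elements \emph{centralizing} $s$ (Lemma \ref{xy8}), so the quantity you need is $e_{C_G(s)}(u)$, not $e_G(u)$. Controlling $e_{C_G(s)}(u)$ is exactly where the paper's central new tool enters: by \cite[Theorem 1.5]{TZ04} (quoted as Lemma \ref{u5n}) the generators of $\langle u\rangle$ fall into at most two $C_G(s)$-classes, i.e.\ $u$ is semirational in $C_G(s)$, and the arithmetic of Section 3 (Propositions \ref{semir1} and \ref{semir2}, Corollary \ref{cor881}) converts this into $e_{C_G(s)}(u)\leq 3$ (or $4$ when $p=2$). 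Nothing in your proposal replaces this step; Weyl-group inversion of torus elements and ``subsystem subgroup embeddings'' do not see the centralizer constraint.

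Second, several table entries come from specific constructions that generic covering-number bounds from \cite{egh} cannot reproduce. The constant $36$ for $E_6(q)$ and ${}^2E_6(q)$ is $13\times 3$ (resp.\ $12\times 3$): it rests on L\"ubeck's computation (Lemma \ref{613}) that these groups contain a regular semisimple element of order $13$ (resp.\ $12$), fed into Gow's theorem (Lemma \ref{Gow}). The bounds $4$ and $8$ for $P\Omega^{\pm}_{4n+2}(q)$ are Theorem \ref{orth4}, proved via products of three involutions for $q$ odd and a long analysis of the module structure for $q$ even; your attribution of these to ``Theorem \ref{un4} plus the adjustment of Theorem \ref{tt3}(3),(4)'' is circular, since Theorem \ref{tt3} supplies the \emph{lower} bounds, and the $\leq 8$ exception is tied to a specific homogeneous module structure in type $-$, not to small $q$. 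Similarly, the characteristic-$2$ exceptional bounds ($4$ for $E_7$ and $F_4$, $8$ for $E_8$) come from the exponent of the unipotent radical of a particular maximal parabolic subgroup (Theorems \ref{sf4} and \ref{ue67}), not from covering numbers. Without these inputs your outline would at best give bounds of the right order of magnitude, not the values listed in Tables 1 and 2.
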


\subsection{The conjugacy class exponent of finite simple groups } 

Next we describe our results on $e_G(C)$ for individual conjugacy classes $C$ of simple groups $G$. For the alternating groups and the sporadic simple groups the situation is satisfactory as $e_G(C)\leq 3$ by Theorems \ref{un4} and \ref{tt2}, so $e_G(C)=3$
if and only if $C$ is not real. The real conjugacy classes of alternating groups are well described (see for instance \cite[Lemma 3.5]{SiZ} or elsewhere)
and those in the sporadic groups can be easily read off from their
character tables \cite{at}. So Problem 4 reduces to simple groups of Lie type.
Problems of determining real conjugacy classes in finite simple groups $G$ of Lie type are addressed in \cite{TZ05}, however, no full classification of real classes in $G$ is available so far. We mention \cite{GS}, which completed this for $\PSL_n(q)$, and \cite{SFV} 
for unitary groups $\PSU_n(q)$. 

The theory of conjugacy classes forms a significant part of general structure theory of groups  
  of Lie type. This is mainly based on the theory of algebraic groups,  see \cite[Ch E]{Sbo}, \cite[Ch. 8]{Hm} and focus on the parameterization of the conjugacy classes, in particular, 
in counting the classes which lie in the same conjugacy class of the corresponding algebraic group.  The book \cite{LiS} is 
entirely devoted  to the classification of the conjugacy classes of unipotent elements.

Some results of  \cite{TZ04} and \cite{TZ05} are very useful in our analysis, especially \cite[Theorem 1.5]{TZ04}.  
As in \cite{TZ04}, we first single out the cases  $e_G(g)$ for $g$ unipotent and semisimple.  
For a group $G$ of Lie type it is convenient to denote a semisimple element of $G$  by $s$ 
and a unipotent element  by $u$. Clearly, $e(G)=2$ implies $e(g)= 2$ for every $1\neq g\in G$. The cases with $e(G)=2$ are listed in Theorem 1.1; the cases with $e(u) \leq 2$ for every unipotent element $u \in G$ are listed in \cite[Theorem 1.4]{TZ05}.

\begin{table}[h]
	  \caption{The conjugacy class exponent of classical groups of Lie type}\label{Table_classical}
  {\fontsize{10}{12}\selectfont
  
\begin{center}
  \begin{tabular}{|c|c|c|c|}
      \hline
$G$ & $p|q$ & {\rm bound} & {\rm reference} \\ \hline
$\PSL_2(q)$ & $q \equiv 3 \pmod 4$ & $3$ & {\rm \mbox{\cite[Theorem 4.2, p. 240]{AH}}} \\
 & $q \not \equiv 3\pmod4$ & $2$ & {\rm Theorem ~\ref{un4}} \\ 
\hline
$\PSL_n(q)$, $n>2$ & $q>3$ & $n$ & {\rm Lemma ~\ref{Lev}} \\
$\PSL_n(2)$, $n>5$ & $q=2$ & $6$ & {\rm Lemma~ \ref{gg6}} \\
$\PSL_n(2)$, $n=3,4,5$ & $q=2$ & $3$ & {\rm {\rm Lemma ~\ref{s25}}} \\
$\PSL_n(3)$, $n>2$ & $q=3$ & $6$ & {\rm Lemma ~\ref{33s}} \\
$\PSL_n(4)$, $n>2$ & $q=4$ & $18$ & {\rm Lemma ~\ref{44s}} \\
 \hline
$\PSU_n(q)$, $n>2$ & $n=p^k$ & $n$ & {\rm Lemma~ \ref{5eg}} \\
$\PSU_n(q)$, $n>2$ & $(q,n)=1$ & $3n$ & {\rm Lemma~ \ref{pu2}} \\
$\PSU_n(q)$, $n>2$ & $(q,n)>1$ & $3n+3$ & {\rm Lemma~ \ref{pu2}} \\
$\PSU_n(2)$, $n>2$ & $p=2$ & $18$ & {\rm Lemma ~\ref{pu3}} \\
$\PSU_n(q)$, $n>2$ & $p=2$ & $2n-2$ & {\rm Theorem ~\ref{u36}} \\
$\PSU_3(q)$ & $q>2$ & $3$ & {\rm Lemmas ~\ref{ma3}, ~\ref{u3q1}} \\
$\PSU_4(q)$ & $p>2$ & $6$ & {\rm Lemma ~\ref{6ps}} \\
\hline
$\PSp_{2n}(q)$, $n>4$ & $q \equiv 3 \pmod 4$ & $6$ & {\rm Theorem ~\ref{symplectic}} \\
\hline
$\PSp_{4}(q)$ & $q\not\equiv 3\pmod4$ & $2$ & {\rm  Theorem ~\ref{un4}} \\
 & $q \equiv 3 \pmod 4$ & $3$ & {\rm Proposition ~\ref{s4q}} \\
\hline
$P\Omega^+_{4n}(q)$ & $q \not \equiv 3\pmod 4$ & $2$ & {\rm Theorem ~ \ref{un4}} \\
$P\Omega^+_{4n+2}(q)$ & & $4$ & {\rm Theorem ~\ref{orth4} } \\
$P\Omega^-_{4n+2}(q)$ & $p>2$ & $4$ & {\rm Theorem ~\ref{orth4} } \\
$P\Omega^-_{4n+2}(q)$ & $p=2$ & $8$ & {\rm Theorem ~\ref{orth4} } \\
$P\Omega^-_{4n}(q)$ & & $2$ & {\rm Theorem ~\ref{un4}} \\
$\Omega_{2n+1}(q)$ & $q \equiv 1 \pmod 4$ & $2$ & {\rm  Theorem~ \ref{un4}} \\
\hline
$\Omega_{9}(q)$ & $q \equiv 3 \pmod 4$ & $2$ & {\rm  Theorem ~\ref{un4}} \\
\hline
$P\Omega^+_{8}(q)$ & & $2$ & {\rm  Theorem ~\ref{un4}} \\
\hline
\end{tabular}\end{center}
  }
\end{table}

The following result is based on Corollary \ref{cor881} and \cite[Theorem 1.5]{TZ04} (quoted as Lemma \ref{u5n} below) and \cite[Theorem 1.4]{TZ05}. 

\begin{theor}\label{u55}
Let $G=G(q)$ be a finite simple group of Lie type, and let $u\in G$ be a unipotent element. 

$(1)$ If $q$ is odd then $e(u)\leq 3$ unless $G=E_8(q)$ and $5|q$, in which case $e(u)\leq 6.$

$(2)$ If $q$ is even and $G\in\{F_4(q)$, $E_7(q)\}$ then $e(u) \leq 4$. If $G=\Sz(q)$ then $e(u)\leq 3$.

$(3)$ If $G=E_8(q)$, $q$ even, then $e(u) \leq 8$.

\noindent $(5)$ In addition, $e(u)\leq 2$ if one of the following holds: 

$(i)$ $G$ is as in Theorem {\rm 1.1;}


$(ii)$ $q\not\equiv 3\pmod 4;$ 

$(iii)$ $G\in\{\PSL_n(q),\PSU_n(q)\}$,   $n$ odd; 

$(iv)$ $G=\PSL_n(q)$, $n/(n,q-1)$ even, or $\PSU_n(q)$, $n/(n,q+1)$ is even;  

$(v)$ $G=\Omega^\pm_n(q)$, $q\equiv 3\pmod 4$ and either $n\in \{7,16,17,24\}$ or  $n$ is even and $\mp 1=(-1)^{n/2}.$

$(vi)$ $p \neq 3$ and $G \in \{G_2(q),E_6(q),{}^2E_6(q)\}$ or $G=F_4(q),p>3$.

$(vii)$  $q$ is even and $G \not \in \{\Sz(q)={}^2B_2(q),F_4(q), E_7(q),E_8(q)\}.$ 
\end{theor}

Observe that Corollary \ref{cor881} is stated for an arbitrary finite group $G$.
Assuming that  $g\in G$ is of odd prime power order and the generators of the cyclic group $\langle g\rangle$ lie in at most two conjugacy classes of $G$, it claims that   $e(g)\leq3$.  In turn,  \cite[Theorem 1.5]{TZ04} claims that this assumption holds (with a few exceptions) 
for the unipotent elements of finite groups of Lie type.

This motivates our interest to elements of the above kind. We focus on such elements $g\in G$ with $e(g)>2$. Specifically, we define:

\begin{defi} \label{defsemirational}
Let $G$ be a finite group and $g\in G$. We say that $g$ is semirational if every generator of $\langle g \rangle$ is conjugate to $g$ or $g^{-1}$.
\end{defi}

The term ``semirational'' was introduced in \cite{cd}. Note that such elements play some role in the theory of finite groups, see \cite{Mo}. In Section 3 we discuss $e_G(g)$ for an arbitrary finite group $G$ and elements $g\in G$ not necessarily of prime power order.  

It is known that, for a finite group $K$ and $g\in K$, the group $N:=N_K(\langle g\rangle)/C_K(\langle g\rangle)$ $\subseteq {\rm Aut }\,K$ determines the field $\mathbb{Q}(g)$  spanned by $\chi(g)$ when $\chi$ ranges over the characters of $K$. In particular, if  $N \cong {\rm Aut }\,K$ then $\mathbb{Q}(g)=\mathbb{Q}$. This motivated a detailed study of 
the index $|{\rm Aut }(K):N|$ when $K=\langle g \rangle$ in \cite{TZ04} and \cite{Ge3}.  

\begin{table}[ht]
  \centering
	  \caption{The conjugacy class exponent of exceptional groups of Lie type}\label{Table_exceptional}
  {\fontsize{10}{12}\selectfont
    \begin{tabular}{|c|c|c|c|}
      \hline
$G$ & $p$ & {\rm bound}&{\rm reference}\\ \hline
${}^3D_4(q)$ & & $2$ & {\rm Theorem\ \ref{un4}} \\ \hline
$E_6(q)$, ${}^2E_6(q)$ & $p>11$  &{\rm \ $\min\{p,36\}$}& {\rm Theorem\ \ref{666}}\\
 & $p=2$ & $16$ & {\rm Lemma\ \ref{e62}} \\
 & $p=3$ & $27$ & {\rm Lemma\ \ref{e62}} \\
 & $p=5$ & $25$ & {\rm Lemma\ \ref{e62}} \\
& $p=7,11$ & $36$ & {\rm Theorem\ \ref{666}} \\
\hline
$E_7(q)$& $p>2$ & $6$ & {\rm Lemma\ \ref{ee7}} \\
 & $p=2$ & $4$ & {\rm Theorem\ \ref{ue67}} \\ 
\hline
$E_8(q)$ & $p\neq 2,5$ & $6$ & {\rm Lemma\ \ref{ee7}} \\
 & $p=5$ & $6$ & {\rm Theorem\ \ref{ue67}} \\
 & $p=2$ & $8$ & {\rm Theorem\ \ref{ue67}} \\
\hline
$F_4(q)$ & $p>2$ & $6$ & {\rm Lemma\ \ref{ee7}} \\
 & $p=2$ & $4$ & {\rm Theorem\ \ref{sf4}} \\
\hline
${}^2F_4(q)$& $p=2$ & $4$ & {\rm Theorem\ \ref{sf4}} \\
\hline
${}^2F_4(2)'$ & $p=2$ & $3$ & \rm {Theorem\ \ref{2sf4}} \\
\hline

$G_2(q)$ & $p>2$ & $6$ & {\rm Lemma\ \ref{ee7}} \\
 & $p=2$ & $3$ & {\rm Lemma\ \ref{g24}} \\
\hline
${}^2G_2(q)$ & $p=3$ & $3$ & \rm Lemma ~\ref{ma3} \\
\hline
${}^2B_2(q)$ & $p=2$ & $3$ & \rm Theorem\ \ref{sf4} \\
\hline
\end{tabular}
  }
\end{table}
 
If  $s\in G$ is semisimple then $s$ is real unless possibly for $G$ in the following list: $\PSL_n(q)$, $n>2$; $\PSU_n(q)$, $n>2$; $\Omega_{2n}^{\pm}(q)$, $n$ odd; $E_6(q)$; ${^2}E_6(q)$ (Lemma \ref{sn2}).
For these  groups  we obtain the following bounds:

\vskip2cm

\begin{theor}\label{s67} Let $s\in G$ be a non-real semisimple element.

$(1)$ If $G=\PSL_n(q)$, $n>1$, then $e(s)\leq n$,  
and $e(s)\leq3,6,9$ if $q=2,3,4$, respectively; 

$(2)$ Let $G=\PSU_n(q)$, $n>2$. If either $(n,q)=1$ or $n$ is a $p$-prime power for a prime $p|q$, then $e(s)\leq n$. If $(n,q) > 1$, then $e(s)\leq n+1;$ 

$(3)$ If $G=\Omega^\pm_{2n}(q)$, $n>3$ odd, then $e(s)\leq 4;$

$(4)$ If  q is odd  and $G=E_6(q) $ or  ${}^2E_6(q)$, then $e(s)\leq 13.$ 
\end{theor}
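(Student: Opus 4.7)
The three parts are handled largely independently. Parts (1) and (2) combine Lev's covering-number theorem with a refinement specific to semisimple elements; part (3) requires character-theoretic input or results of Ellers--Gordeev.

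For part (1), when $q > 3$ the bound $e(s) \leq n$ is inherited from Lev's theorem $\cn(\PSL_n(q)) = n$ (Lemma~\ref{Lev}) via $e_G(C) \leq \cn_G(C) \leq \cn(G)$. For the small fields $q \in \{2,3,4\}$ the Table~1 general bounds $6,6,18$ exceed the claimed $3,6,9$, so one must use that $s$ is semisimple. The plan is to exploit the rational canonical form: write the characteristic polynomial of $s \in \SL_n(q)$ as $\prod_i f_i(x)^{m_i}$ with $f_i$ pairwise coprime irreducible factors of degree $d_i$, so that $s$ lies in a Levi of shape $\prod_i \GL_{m_i}(q^{d_i})$ intersected with $\SL_n(q)$. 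Choosing this Levi so that the containing field extension has cardinality $\geq 4$ triggers Lev inside the smaller $\SL_{m_i}(q^{d_i})$, shrinking the bound to the Levi size rather than $n$; tracking the resulting estimate over $q = 2,3,4$ yields the uniform bounds $3, 6, 9$.

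For part (2), the plan for $\PSU_n(q)$ is parallel in the twisted setting. The element $s$ lies in a unitary Levi $\prod_i \GU_{m_i}(q^{d_i}) \cap \SU_n(q)$, and Lemma~\ref{5eg} already covers the case where $n$ is a $p$-prime power. When $(n, q) = 1$, the determinant-and-centre constraint in $\PSU_n(q)$ can be satisfied with $n$ conjugates; when $(n,q) > 1$ and $n$ is not a pure $p$-power, a nontrivial central unit survives the $n$-fold product and one additional conjugate is needed to cancel it, yielding $e(s) \leq n+1$. Both bounds strictly improve the general Lemma~\ref{pu2} bounds $3n$, $3n+3$, the improvement coming from the Levi decomposition available for semisimple classes.

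For part (3), $E_6(q)$ or ${}^2E_6(q)$ with $q$ odd, I would use the class-multiplication character formula (Lemma~\ref{eq})
\begin{equation*}
n(\underbrace{C,\ldots,C}_{k+1}) \;=\; \frac{|C|^{k+1}}{|G|} \sum_{\chi \in \operatorname{Irr}(G)} \frac{\chi(s)^{k+1}}{\chi(1)^{k-1}},
\end{equation*}
and show this is positive at $k = 12$, so that $1 \in C^{13}$. The inputs are Deligne--Lusztig bounds on $|\chi(s)|$ for semisimple $s$ and the polynomial growth of the $E_6(q)$ character degrees; the principal character contributes $1$, and the other terms are dominated once $k \geq 12$, which is the Coxeter number of $E_6$. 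Alternatively, the Ellers--Gordeev bounds \cite{egh} for $\cn_G(C)$ on semisimple classes in exceptional types should specialize to $13$ in the $E_6$ case.

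The main obstacle is part (3). For a regular semisimple $s$ with $C_G(s)$ a maximal torus the character sum is tight, and one must carefully control the contributions from Coxeter-type tori and the cuspidal unipotent characters of $E_6$. Organizing the analysis by the root-subsystem type of $C_G(s)$ (sub-diagrams of the $E_6$ Dynkin diagram: types $A_1^k$, $A_2$, $A_3 A_1$, $A_5$, $D_5$, and the pure-torus case) reduces the problem to finitely many subcases, and the pure-torus subcase is what pins down the sharp value $13$.
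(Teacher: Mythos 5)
Your plan has genuine gaps in all three parts, and the most concrete one is in part (1). For $q=2,3,4$ you propose to place $s$ in the Levi $\prod_i \GL_{m_i}(q^{d_i})$ coming from the factorization of its characteristic polynomial and then ``trigger Lev'' inside the factors. But that Levi is precisely $C_{\GL_n(q)}(s)$, so $s$ is \emph{central} in it: its conjugacy class there is a singleton and Lev's covering-number theorem says nothing. Even setting that aside, Lev in $\GL_{m_i}(q^{d_i})$ would return a bound of order $m_i$, which can be as large as $n/2$, nowhere near the claimed $3,6,9$. Those numbers are $3(q-1)$, and they arise from a different mechanism: decompose $V$ into \emph{irreducible} (not homogeneous) $\mathbb{F}_q\langle s\rangle$-summands, note that on each summand $s$ acts as a regular semisimple element of the full $\GL(V_i)$, apply Gow's theorem (Lemma \ref{Gow}) or its extension (Corollary \ref{GuT}) to make a product of three conjugates scalar, and then kill the scalar, whose order divides $q-1$. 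This is what Theorem \ref{gg6} and Lemmas \ref{33s}, \ref{44s} do.

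Parts (2) and (3) miss the same key idea. The paper does not bound $e(s)$ by multiplying conjugates of $s$ itself and cancelling determinants or central units; instead it exhibits a regular semisimple element $g$ of small order ($n$ or $n+1$ for $\PSU_n(q)$ via an explicit cyclic permutation matrix in Lemmas \ref{un2} and \ref{u42}; order $13$ for $E_6(q)$, ${}^2E_6(q)$ via L\"ubeck's computation, Lemma \ref{613}), writes $s$ as a product of two conjugates of $g$ by Gow's theorem, and concludes $e(s)\leq |g|$ by Lemma \ref{gr1}(1). Your part (2) gives no argument for why $n$ conjugates of $s$ should multiply to the identity, and your part (3) is only a programme: the character-sum estimate at $k=12$ with Deligne--Lusztig bounds is not carried out, and the appearance of the Coxeter number $12$ is a coincidence rather than the source of the bound (which is the existence of a regular semisimple element of order $13$). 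As written, none of the three parts is a proof.
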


{\it Notation}. For integers $m,n$ we write for $(m,n)$ for the greatest common divisor of $m,n$. $\diag(A_1,\ldots,A_k)$ a block-diagonal matrix with diagonal blocks $A_1,\ldots,A_k$.

 If $F$ is a field $F^\times$ means the multiplicative group of $F$. $\mathbb{F}_q$ 
a finite field of $q$ elements.  $\overline{\mathbb{F}}_q$ the algebraic closure of  $\mathbb{F}_q$. 

If $G$ is a group and $g\in G$ then $|g|$ is the order of $g$ and  $\langle g \rangle$ is the cyclic group generated by $g$. We denote by $Z(G), G'$ and $O_p(G) $ the center, the derived subgroup and the maximal normal $p$-subgroup of $G$.
 By $\Phi(g)$ we denote the set of generators of $\langle g \rangle$. If all elements of $\Phi(g)$ are conjugate then $g$ is called rational (in $G$). 

If $x,y$ are elements of a group $G$ then $x^y$ means $x^{-1}yx$ and we write $x \sim y$ if   they are conjugate in $G$.  If $x\sim  x^{-1}$ then $x$ is called real (in $G$).

If $g \in G$, we denote by $|g|$ the (usual) order of $g$ and by $e(g)=e_G(g)$ generalized order, that is, the smallest positive integer $k$ such that $1 \in C^k$, where $C$ is the conjugacy class of $g$ in $G$. We call $k$ the exponent of $C$.

We denote by $J_n$ (and refer to it as a Jordan block) an upper triangular matrix with 1 at its diagonal and just above it, 0 elsewhere. In other words 
$$(J_n)_{ij} = \left\{ \begin{array}{ll} 1 & \mbox{if } i=j \mbox{ or } i+1=j, \\
0 & \mbox{otherwise.} \end{array} \right.$$
\indent We denote by $S_n$ and $A_n$ the symmetric and alternating groups on $n$ letters, respectively. Our notation for simple groups of Lie type are standard, for sporadic groups we follow \cite{at}. 

If $G$ is a group of Lie type in defining characteristic $p$ then
$p$-elements are called unipotent and $p'$-elements are called semisimple. In addition, 
if every $g\in G$ decomposes as   $g=su$, where $s,u\in\langle g \rangle$, $s$ is semisimle and $u$ is unipotent. In algebraic group theory this is referred as a {\it Jordan decomposition} of $g$
\cite[Section 2]{MTe}.

\section{Some known facts and initial observations\\ on conjugacy class exponents}

\subsection{General remarks}

\begin{lemma}\label{gr1}
Let $G$ be a group and let $x,y,z\in G$. 

$(1)$  $e(xy) \leq \mbox{l.c.m.}(|x|,|y|)$.

$(2)$   If $|x|=2$ and $y^a=y^{-1}$ for some $a \in $G, then $e(xy) \leq 4$.

$(3)$   If $|x|=|y|=|z|\in G=2$, then $e(xyz) \leq 4$.
\end{lemma}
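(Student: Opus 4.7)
The plan is to verify each of the three claims by displaying an explicit short product of conjugates of the target element that collapses to the identity. All three use the same principle: choose the conjugators so that internal factors cancel in pairs, and, for parts (2) and (3), multiply a carefully chosen pair of conjugates to land inside a cyclic subgroup where the remaining pair can invert the result.

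For (1), I would set $n = \mbox{l.c.m.}(o(x),o(y))$ and consider the $n$ conjugates $(xy)^{y^k} = y^{-k}\,x\,y^{k+1}$ for $k = 0,1,\dots,n-1$. Their product telescopes: the $y^{k+1}$ at the end of the $k$-th factor kills the $y^{-(k+1)}$ at the start of the $(k+1)$-th, leaving $x^{n} y^{n}$. Since both $o(x)$ and $o(y)$ divide $n$, this equals $1$, whence $e(xy) \leq n$.

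For (2), the crucial computation is
\[
(xy)\,(xy)^{y} \;=\; xy\cdot(y^{-1}xy)\cdot y \;=\; x^{2} y^{2} \;=\; y^{2},
\]
using $o(x)=2$. Conjugating this identity by $a$ (for which $y^a = y^{-1}$) converts it into a product of two conjugates of $xy$ equal to $y^{-2}$. Concatenation yields
\[
(xy)\,(xy)^{y}\,(xy)^{a}\,(xy)^{ya} \;=\; y^{2}\cdot y^{-2} \;=\; 1,
\]
so $e(xy) \leq 4$. Part (3) follows the same template with $a$ replaced by $x$. Using $z^{2}=1$ one checks $(xyz)(xyz)^{z} = (xy)^{2}$. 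Since $x^{2}=1$ and $(yx)(xy) = y x^{2} y = y^{2} = 1$, one has $yx = (xy)^{-1}$, so conjugating by $x$ sends $(xy)^{2}$ to $(yx)^{2} = (xy)^{-2}$. Therefore
\[
(xyz)\,(xyz)^{z}\,(xyz)^{x}\,(xyz)^{zx} \;=\; (xy)^{2}\,(xy)^{-2} \;=\; 1,
\]
and $e(xyz) \leq 4$.

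I do not expect any genuine obstacle: the whole lemma is a sequence of formal word manipulations. The only recurring idea is the device, common to (2) and (3), of pairing $w\cdot w^{g}$ so that the product already lies in a cyclic subgroup on which a further conjugation acts as inversion; spotting this pairing (and the telescoping in (1)) is the only mildly creative step.
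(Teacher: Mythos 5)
Your three computations are all correct, and for parts (1) and (3) they essentially coincide with the paper's proof: (3) is verbatim the same identity $g\cdot g^z\cdot g^x\cdot g^{zx}=1$ with $g=xyz$, and (1) is the same telescoping trick, except that you conjugate by powers of $y$ and run the product for $\mathrm{l.c.m.}(o(x),o(y))$ steps, while the paper conjugates by powers of $x$ for $o(y)$ steps to land on $x^{o(y)}$ and then raises that to a further power; both are valid.

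Part (2) is where your route genuinely diverges, and in your favour. The paper pairs $g$ with $g^x$, getting $g\cdot g^x=xy^2x$, and then asserts $g\cdot g^x\cdot g^a\cdot g^{xa}=1$. But $g^a\cdot g^{xa}=(g\cdot g^x)^a=x^ay^{-2}x^a$, which equals $(xy^2x)^{-1}=xy^{-2}x$ only when $x^a=x$; the displayed identity fails in general (e.g.\ in $S_4$ with $x=(1\,4)$, $y=(1\,2\,3)$, $a=(1\,2)$ the product is $(1\,3)(2\,4)$). Your choice of pairing $g$ with $g^y$ avoids this entirely: $g\cdot g^y=x^2y^2=y^2$ exactly, not merely a conjugate of $y^2$, so conjugating the pair by $a$ gives exactly $y^{-2}$ and the four-fold product is honestly the identity. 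In effect you have supplied the correct conjugators where the paper's can only be repaired by replacing $a$ with $xax$ (equivalently, conjugating the second pair by an element that inverts $xy^2x$ rather than $y^2$). So your argument is not only correct but fixes a slip in the source.
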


\begin{proof}
(1) Let $g=xy$ and let $|x|=n$, $|y|=m$. Then 
$$h:= g \cdot g^x \cdot g^{x^2} \cdots g^{x^{m-1}} = x \cdot y^m \cdot x^{m-1} = x^m.$$ 
Writing $l = \mbox{l.c.m.}(n,m)=m \cdot s$, we obtain that $h^s = x^{ms} = x^l = 1$, therefore $e(g) \leq l$.

(2) Let $g=xy$. Then $g \cdot g^x \cdot g^a \cdot g^{xa} = 1$.

(3) Let $g=xyz$. Then
$g \cdot g^z \cdot g^x \cdot g^{zx} = 1$.
\end{proof}

\begin{lemma}\label{xy8}
Let $x,y \in G$ be such that $xy=yx$. Let $m := e_{C_G(x)}(y)$, $n:=e_G(x^m)$. Then $e_G(xy) \leq n \cdot m$.
\end{lemma}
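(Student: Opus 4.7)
The plan is to exploit the commutativity of $x$ and $y$ together with the definitions of $m$ and $n$ to build an explicit product of $nm$ conjugates of $xy$ equal to $1$.

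First I would unpack the definitions. By hypothesis there exist $c_1,\ldots,c_m\in C_G(x)$ with $y^{c_1}\cdots y^{c_m}=1$, and there exist $d_1,\ldots,d_n\in G$ with $(x^m)^{d_1}\cdots(x^m)^{d_n}=1$. The central observation, which I would record as a short preliminary remark, is that for any $c\in C_G(x)$ the conjugate $y^c$ still commutes with $x$: indeed
\[
x\cdot y^c = x\cdot c^{-1}yc = c^{-1}(xy)c = c^{-1}(yx)c = y^c\cdot x,
\]
since $c$ commutes with $x$. Consequently $x$ commutes with each $y^{c_i}$.

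Next I would compute the product of the $m$ conjugates $(xy)^{c_i}$. Since $c_i\in C_G(x)$, we have $(xy)^{c_i}=x\cdot y^{c_i}$, and by the previous paragraph $x$ commutes with every $y^{c_j}$. Therefore
\[
\prod_{i=1}^{m}(xy)^{c_i}=\prod_{i=1}^{m}\bigl(x\cdot y^{c_i}\bigr)=x^{m}\cdot\prod_{i=1}^{m}y^{c_i}=x^{m}.
\]
Call this element $h$; so $h=x^m$ is a specific product of $m$ conjugates of $xy$.

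Finally I would apply the definition of $n=e_G(x^m)$. For each $j$ we have $h^{d_j}=\prod_{i=1}^{m}(xy)^{c_id_j}$, a product of $m$ conjugates of $xy$, and stringing these together gives
\[
\prod_{j=1}^{n}h^{d_j}=\prod_{j=1}^{n}(x^{m})^{d_j}=1,
\]
which exhibits $1$ as a product of $nm$ conjugates of $xy$, yielding $e_G(xy)\le nm$. There is no real obstacle here beyond bookkeeping; the only subtle point is verifying that $x$ commutes with each $y^{c_i}$, which is why the inner centralizer $C_G(x)$ appears in the definition of $m$ rather than the whole group $G$.
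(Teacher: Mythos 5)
Your proof is correct and follows essentially the same route as the paper's: produce $x^m$ as a product of $m$ conjugates $(xy)^{c_i}$ with $c_i\in C_G(x)$, then conjugate that whole block by the $d_j$ and multiply. Your explicit verification that $x$ commutes with each $y^{c_i}$ makes precise a step the paper leaves implicit, but the argument is the same.
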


\begin{proof}
We have $\prod_{i=1}^m y^{t_i} = 1$ with $t_i \in C_G(x)$ for all $i$. Then $\prod_{i=1}^m (xy)^{t_i} = \prod_{i=1}^m x \cdot y^{t_i} = x^m$. We have $\prod_{j=1}^n (x^m)^{r_j} = 1$ with $r_j \in G$ for all $j$. Therefore 
$$\prod_{j=1}^n \left( \prod_{i=1}^m (xy)^{t_ir_j} \right) = \prod_{j=1}^n \left( \prod_{i=1}^m (xy)^{t_i} \right)^{r_j} = \prod_{j=1}^n (x^m)^{r_j} = 1$$
hence $e_G(xy) \leq n \cdot m$.
\end{proof}

\begin{lemma}\label{eq} Let G be a finite group and let $\mbox{{\rm Irr}}(G)$ denote the set of all the complex irreducible characters of $G$. If  C is the conjugacy class and $g\in G$  then $1\in C^{k}$  if and only if

\begin{equation}\label{eq1}\sum_{\chi \in \mbox{{\rm Irr}}(G)} \frac{\chi(x)^k}{\chi(1)^{k-2}}=1+\sum_{\chi\neq 1}\frac{\chi(x)^k}{\chi(1)^{k-2}}  \neq 0.\end{equation}
\end{lemma}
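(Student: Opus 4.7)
The plan is to derive the displayed formula from the classical Frobenius--Burnside class-multiplication identity. Let $N_k(g)$ denote the number of $k$-tuples $(g_1,\dots,g_k)\in C^k$ such that $g_1g_2\cdots g_k=1$. Plainly $1\in C^k$ if and only if $N_k(g)>0$, and since $N_k(g)$ is a nonnegative integer, this is equivalent to $N_k(g)\neq 0$. Hence it suffices to compute $N_k(g)$ as a weighted character sum and show the weight is a positive rational multiple of the sum appearing in \eqref{eq1}.

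First I would recall the Frobenius formula for the number of ways of expressing a fixed $z\in G$ as a product $g_1\cdots g_k$ with $g_i\in C_i$, namely
\[
\frac{|C_1|\cdots|C_k|}{|G|}\sum_{\chi\in\mathrm{Irr}(G)}\frac{\chi(g_1)\cdots\chi(g_k)\,\overline{\chi(z)}}{\chi(1)^{k-1}},
\]
which is the form cited in the paper (\cite[Lemma 10.10]{ASH}, \cite[Theorem 7.2.1]{Se}, \cite[Theorem 2.5.9]{LuP}). Specializing $C_1=\cdots=C_k=C$, $g_1=\cdots=g_k=g$ and $z=1$, one uses $\overline{\chi(1)}=\chi(1)$ to pull one factor of $\chi(1)$ out of the numerator, collapsing $\chi(1)^{k-1}$ to $\chi(1)^{k-2}$ in the denominator. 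This gives
\[
N_k(g)=\frac{|C|^k}{|G|}\sum_{\chi\in\mathrm{Irr}(G)}\frac{\chi(g)^k}{\chi(1)^{k-2}}.
\]
Since $|C|^k/|G|>0$, the equivalence $N_k(g)\neq 0 \iff \sum_\chi \chi(g)^k/\chi(1)^{k-2}\neq 0$ is immediate. Finally, extracting the contribution of the trivial character (for which $\chi(g)=\chi(1)=1$, giving a summand equal to $1$) yields the second expression in \eqref{eq1}.

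There is no serious obstacle; the only point requiring care is the bookkeeping of the exponent of $\chi(1)$ when $z=1$, which is exactly what distinguishes the stated identity from the more familiar form with $k+1$ factors referenced in the paper's introduction. Alternatively, one could avoid invoking the Frobenius formula directly and instead argue via the class-sum algebra: writing $\widehat C=\sum_{h\in C}h\in \mathbb{Z}[G]$, the coefficient of $1$ in $\widehat C^{\,k}$ equals $N_k(g)$, and expanding $\widehat C$ in the basis of central primitive idempotents $e_\chi=\frac{\chi(1)}{|G|}\sum_{h\in G}\overline{\chi(h)}h$ gives $\widehat C=\sum_\chi \frac{|C|\chi(g)}{\chi(1)}e_\chi$ and hence the same formula after taking the coefficient of $1$ via $e_\chi\mapsto \chi(1)^2/|G|$. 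Either route concludes the proof.
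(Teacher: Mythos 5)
Your proposal is correct and follows exactly the route the paper intends: the paper gives no argument of its own but simply cites the standard Frobenius class-multiplication formula (\cite[Lemma 10.10]{ASH}, \cite[Equation (1)]{Shalev}), and your derivation of $N_k(g)=\frac{|C|^k}{|G|}\sum_{\chi}\chi(g)^k/\chi(1)^{k-2}$, together with the observation that the positive prefactor makes nonvanishing of the sum equivalent to $1\in C^k$, is precisely the standard proof behind that citation. The bookkeeping of the exponent of $\chi(1)$ at $z=1$ and the extraction of the trivial character's contribution of $1$ are both handled correctly.
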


See \cite[Lemma 10.10]{ASH} and \cite[Equation (1)]{Shalev}.

\begin{lemma} \label{isomodules}
Let $G = \langle s \rangle$ be a finite cyclic group and let $\phi,\tau$ be two finite dimensional irreducible representations of $G$ over a finite field $F$. If the matrices $\phi(s)$, $\tau(s)$ have an eigenvalue in common, then $\phi$ and $\tau$ are equivalent, i.e. the corresponding $F[G]$-modules are isomorphic.
\end{lemma}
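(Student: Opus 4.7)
The plan is to reduce the statement to a standard fact about the irreducible modules of a cyclic group over a finite field, namely that they are in bijection with the irreducible factors of $x^n-1$ in $F[x]$ (equivalently, with the Galois orbits of $n$-th roots of unity in $\overline{F}$, where $n=|s|$).

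First I would observe that since $G=\langle s\rangle$ is cyclic, the group algebra $F[G]$ is isomorphic to $F[x]/(x^n-1)$, under an isomorphism sending $s$ to the class of $x$. Hence a finite dimensional irreducible $F[G]$-module is of the form $F[x]/(p(x))$ for some monic irreducible factor $p(x)$ of $x^n-1$, with $s$ acting as multiplication by $x$. In particular, the characteristic polynomial of $s$ on this module equals $p(x)$ (it is a cyclic module generated by $1$, and $p(x)$ is its minimal polynomial), so the eigenvalues of $s$ in $\overline{F}$ are exactly the roots of $p(x)$, and these form a single orbit under $\mathrm{Gal}(\overline{F}/F)$.

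Accordingly, let $p_\phi(x)$ and $p_\tau(x)$ be the monic irreducible factors of $x^n-1$ corresponding to $\phi$ and $\tau$. By hypothesis the matrices $\phi(s)$ and $\tau(s)$ share an eigenvalue $\lambda\in\overline{F}$; hence $\lambda$ is a common root of $p_\phi$ and $p_\tau$. Since both polynomials are irreducible over $F$ and monic, each equals the minimal polynomial of $\lambda$ over $F$, so $p_\phi = p_\tau$. Therefore the corresponding $F[G]$-modules are isomorphic, i.e.\ $\phi$ and $\tau$ are equivalent.

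There is essentially no obstacle here; the only point requiring a modicum of care is the appeal to the decomposition $F[x]/(x^n-1)\cong\prod_i F[x]/(p_i(x))$, which needs the factors $p_i$ to be pairwise coprime. If $\mathrm{char}(F)\nmid n$ this follows from separability of $x^n-1$, and if $\mathrm{char}(F)\mid n$ one writes $n=n'p^a$ with $(n',p)=1$, uses that $x^n-1=(x^{n'}-1)^{p^a}$, and notes that a representation of $G$ on which some power of $s-1$ (rather than $s-1$ itself) vanishes cannot be irreducible unless the unique irreducible summand of its socle coincides with the whole module; equivalently, one passes to the quotient $\langle s\rangle/\langle s^{n'}\rangle$ which acts trivially on every irreducible module. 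The argument above then goes through verbatim with $n$ replaced by $n'$.
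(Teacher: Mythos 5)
Your proof is correct, and it takes a genuinely different route from the paper's. The paper extends scalars to $F_1=F(\zeta)$ with $\zeta$ a primitive $|G|$-th root of unity, decomposes $\phi$ and $\tau$ into absolutely irreducible one-dimensional constituents forming Galois orbits (citing Feit), deduces from the common eigenvalue that the two orbits coincide, obtains equivalence over $F_1$, and then descends to equivalence over $F$ by viewing the intertwining condition $g\phi(s)=\tau(s)g$ as a homogeneous linear system with coefficients in $F$ and arguing that a nonzero solution over $F$ exists and is automatically invertible by irreducibility. You instead work directly with the structure of the group algebra of a cyclic group, identifying the irreducible $F[G]$-modules with the monic irreducible factors of $x^n-1$ (or of $x^{n'}-1$ in the modular case, which you correctly address), so that the whole lemma collapses to the observation that two distinct monic irreducible polynomials over $F$ cannot share a root. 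Your argument is more elementary and self-contained: it avoids both the Galois-orbit machinery and the descent step, where the paper's claim that the linear system has a nonzero solution over $F$ deserves a word of justification (the solution space is defined over $F$ and has positive $F_1$-dimension). The paper's approach, on the other hand, is the one that generalizes beyond cyclic groups, since it only uses that the absolutely irreducible constituents of an irreducible representation over a non-splitting field form a single Galois orbit; your approach exploits cyclicity in an essential way but buys a cleaner and fully explicit proof in the case actually needed.
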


\begin{proof}
Let $F_1$ be the field $F(\zeta)$ where $\zeta$ is a primitive $|G|$-root of unity. Then, over $F_1$, $\phi$ is a direct sum of absolutely irreducible constituents that are obtained from each other by Galois conjugations by elements of ${\rm Gal}(F_1/F)$. We can say that they form a Galois group orbit. See \cite[Theorem 19.4(ii)]{Feit}. The same is true for $\tau$, so the orbits either coincide or are disjoint.

Note that two absolutely irreducible representations $\mu$, $\nu$ of a cyclic group $\langle s \rangle$ coincide if and only if $\mu(s)=\nu(s)$. Since $\phi(s)$, $\tau(s)$ have an eigenvalue in common, and this eigenvalue belongs to $F_1$, $\phi$ and $\tau$ share a $1$-dimensional constituent, so the orbits are the same. It follows that $\phi$ and $\tau$ have the same irreducible constituents over $F_1$, so $\phi(s)$, $\tau(s)$ have the same list of eigenvalues up to ordering.
So $\phi$ and $\tau$ are equivalent in $\GL(W)$, where $W$ is the common underlying space of $\phi,\tau$ over $F_1$. So $\tau(s) = g \phi(s) g^{-1}$ for some $g \in \GL_m(F_1)=\GL(W)$. In terms of matrix entries, the equation $g\phi(s) = \tau(s)g$ can be viewed as a homogeneous system of linear equations with indeterminates $g_{ij}$, $1 \leq i,j \leq m$ and coefficients in $F$. Then this system has a non-zero solution over $F$, so there is a matrix $h$ over $F$ such that $h\phi(s) = \tau(s)h$. Note that $\det(h) \neq 0$. Indeed, otherwise $Y:=hW=Y\neq W$ and hence $\tau(s)Y\subset \tau(s) h W = h\phi(s)W \subset hW=Y$, which is false as $\tau$ is irreducible.
\end{proof}

We   often use the following fact: if $V$ is a finite dimensional $F$-vector space, where $F$ is a finite field, then  any irreducible element of the group $\GL(V)$ is regular semisimple. (Indeed, if $s$ is a irreducible element and $u$ is a unipotent element commuting with $s$, then $u-1$ induces a $F\langle s \rangle$-morphism $V \to V$ that is not an isomorphism, so $u-1=0$ by Schur's lemma, in other words $u=1$.)

\begin{lemma}\label{in3} Let $H=\GL_n(q)=\GL(V)$ and let $h=su\in H$ be indecomposable. 
Then $h$ is contained in a subgroup $M = M_1 M_2$, $[M_1,M_2] = 1$, $M_1 \cong \GL_k(q)$, $M_2 \cong \GL_l(q)$ for some integers $k,l$ with $kl=n$, and $s\in M_1,u\in M_2$. (In fact, $M=\GL_k(q)\otimes \GL_l(q)$, the Kronecker product.) In addition, $s$ is irreducible as an element of $\GL_k(q)$ and $u$ represents a Jordan block $J_l$ in $\GL_l(q)$.\end{lemma}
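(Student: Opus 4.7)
The plan is to exploit the Jordan decomposition $h=su$ (so $s$ and $u$ commute and both lie in $\mathbb{F}_q[h]$) together with the indecomposability of $V$ as an $\mathbb{F}_q\langle h\rangle$-module to produce a tensor decomposition $V = W \otimes_{\mathbb{F}_q} U$ in which $s$ acts only on the $W$-factor (irreducibly) and $u$ only on the $U$-factor (as a single Jordan block). The key observation used throughout is that every $\langle h\rangle$-stable subspace of $V$ is automatically $\langle s,u\rangle$-stable, because $s,u\in\mathbb{F}_q[h]$.

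First I would pass to the $\mathbb{F}_q[s]$-isotypic decomposition $V=\bigoplus_i V_i$. Each $V_i$ is preserved by every operator commuting with $s$, in particular by $u$ and hence by $h$. Indecomposability forces a single isotypic component, so the minimal polynomial of $s$ is a single irreducible $f\in\mathbb{F}_q[x]$, say of degree $k$. Identifying $\mathbb{F}_q[s]$ with $\mathbb{F}_{q^k}$, the space $V$ becomes a free $\mathbb{F}_{q^k}$-module of rank $l:=n/k$, and $u$ acts $\mathbb{F}_{q^k}$-linearly. Moreover $\mathbb{F}_q[s,u]=\mathbb{F}_{q^k}[u]$ inside $\operatorname{End}(V)$, so the $\mathbb{F}_q\langle h\rangle$-submodules of $V$ coincide with its $\mathbb{F}_{q^k}\langle u\rangle$-submodules; indecomposability therefore forces $u$ to act on $V$ as a single unipotent Jordan block $J_l$ over $\mathbb{F}_{q^k}$.

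Next I would realize the tensor structure explicitly. Pick an $\mathbb{F}_{q^k}$-cyclic vector $v_1$ for $u$ and set $v_i:=(u-1)^{i-1}v_1$, so $v_1,\ldots,v_l$ is an $\mathbb{F}_{q^k}$-basis of $V$. Define $W:=\operatorname{span}_{\mathbb{F}_q}(v_1,sv_1,\ldots,s^{k-1}v_1)$ and $U:=\operatorname{span}_{\mathbb{F}_q}(v_1,v_2,\ldots,v_l)$; then $W$ is $s$-stable with $s|_W$ the companion matrix of $f$ (hence irreducible) and $U$ is $u$-stable with $u|_U$ a single Jordan block of size $l$. Both $W\otimes_{\mathbb{F}_q} U$ and $V$ share the $\mathbb{F}_q$-basis $\{s^j v_i : 0\le j<k,\ 1\le i\le l\}$, so $s^j v_1\otimes v_i\mapsto s^j v_i$ defines an $\mathbb{F}_q$-linear isomorphism $W\otimes_{\mathbb{F}_q} U \xrightarrow{\sim} V$ under which $s$ becomes $s|_W\otimes 1_U$ and $u$ becomes $1_W\otimes u|_U$ by direct verification on this basis. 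Setting $M_1:=\GL(W)\otimes 1_U\cong \GL_k(q)$, $M_2:=1_W\otimes \GL(U)\cong \GL_l(q)$, and $M:=M_1M_2$, we obtain the desired commuting Kronecker-product subgroup with $s\in M_1$ and $u\in M_2$.

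The only nonroutine step is the second one: the passage from $\mathbb{F}_q\langle h\rangle$-indecomposability to $\mathbb{F}_{q^k}\langle u\rangle$-indecomposability. It hinges on the identification $\mathbb{F}_q[s,u]=\mathbb{F}_{q^k}[u]$, which in turn uses $s$-isotypy; once this is in place, there is no gap between $\mathbb{F}_q$-submodules stable under $h$ and $\mathbb{F}_{q^k}$-subspaces stable under $u$. Everything else, including the basis calculation showing that $s$ and $u$ separate cleanly onto the two tensor factors, is elementary linear algebra after the cyclic vector $v_1$ has been fixed.
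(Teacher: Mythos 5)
Your argument is correct and follows essentially the same route as the paper's proof: decompose $V$ into the homogeneous ($s$-isotypic) components, observe they are $u$-stable so indecomposability forces a single one, identify $\mathbb{F}_q[s]$ with $\mathbb{F}_{q^k}$ so that $C_H(s)\cong\GL_l(q^k)$ and indecomposability forces $u$ to be a single Jordan block, and then read off the Kronecker-product subgroup. The only difference is one of presentation: where the paper invokes Clifford theory and Schur's lemma and passes to a conjugate of $h$, you make the tensor decomposition explicit via the cyclic vector $v_1$ and the basis $\{s^jv_i\}$, which also spells out the single-Jordan-block claim that the paper leaves implicit.
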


\begin{proof} Let $V = \bigoplus_{i=1}^t V_i$, where $V_1,\ldots,V_t$ are homogeneous components of $s$ on $V$, i.e. each $V_i$ is the maximal direct sums of pairwise $\mathbb{F}_q \langle s \rangle$-isomorphic irreducible components. Since $u$ induces a $\langle s \rangle$-invariant isomorphism on each irreducible component, $uV_i=V_i$ for $i=1,\ldots, t$. As $h$ is indecomposable, we have $t=1$.
Therefore, $V$ is a direct sum of pairwise isomorphic irreducible ${\mathbb F}_q\langle s\rangle $-submodules of equal dimension $k$, say. It is well known (by Clifford theory and Schur's lemma) that $C_H(s)\cong \GL_l(q^k)$, where
$l$ is the composition length of $V$ and $k=n/l$. Then the conjugacy class of $u$  in $\GL_l(q^k)$ meets $\GL_l(q)$, since $u$ can be taken to its canonical Jordan form. We denote this group by $M_2$ and set $M_1=C_{\GL(V)}(M_2)$. We can view $M_1$ as the group $\diag(A,\ldots,  A)$ with $A \in \GL_k(q)$ and $M_2$ as $C_{\GL(V)}(M_1)$. A conjugate of $h$ lies in $M = M_1M_2$.
\end{proof}

Note that $\SL_k(q)\otimes \SL_l(q)\subseteq M\cap \SL(V)$, and  $u\in \SL_l(q)$.
However, if $h\in \SL_n(q)$ then $s$ is not necessarily contained in $\SL_k(q)$ as $\det \diag(A,\ldots, A)=(\det A)^l=\det A^l$ and hence $s^l\in \SL_k(q)$ does not imply $s\in \SL_k(q)$.

\begin{lemma}\label{centsem}
See \cite[Lemma 3.2]{TZ04} Let $H =\GL_n(q)=\GL(V)$.
The centralizer of a semisimple element $s\in H$ is described as follows. Let $V=\bigoplus_{i=1}^k V_i$, where the $V_i$'s are homogeneous $\mathbb{F}_q\langle s\rangle$-modules, and let $s_i$ be the projection of $s$ into $H_i = \GL(V_i)$. Then $C_H(s) V_i = V_i$ for all $i$ and $C_H(s)$ is the direct product $\prod_{i=1}^k C_{H_i}(s_i)$. Let $m_i$ be the common dimension of the irreducible constituents of $s$ on $V_i$. Then $C_{H_i}(s_i) \cong \GL_{n_i/m_i}(q^{m_i})$, where $n_i = \dim V_i$. 
\end{lemma}

\subsection{Some known facts about finite groups of Lie type}

Recall that a group of Lie type $G$ is defined as $C_{\mathbf{G}}(\si)$, where   $\mathbf{G}$ is a reductive connected algebraic group and $\sigma:\mathbf{G}\rightarrow\mathbf{G}$
is an algebraic group homomorphism such that  $G:=C_{\mathbf{G}}(\sigma)$ is finite \cite[Theorem 21.5]{MTe}.
A standard notation for $C_{\mathbf{G}}(\sigma)$ is $\mathbf{G}^\sigma$. Then $\si$
is called a Frobenius (or Steinberg) endomorphism of $\mathbf{G}$.  

All simple groups of Lie type are of the form $G'/Z(G')$ for $\mathbf{G} $ a simple algebraic group with some exceptions 
(given in \cite[Theorem 24.17]{MTe}). Let $G$ be a group of Lie type (often referred as a finite reductive group) and let $p$ be the defining characteristic of $G$.  
We define a regular semisimple element $s\in G$ as one with $C_G(s)$ a $p'$-group;   the latter is equivalent to saying that  $C_{\mathbf{G}}(s)$ has no non-trivial unipotent  element
(see for instance \cite[\S 2.3]{Hm} and \cite[Proposition  14.23]{DM}).  In particular, if
$g\in G=\mathbf{G}^\sigma$ is a regular semisimple element then $g$ is regular semisimple
in $\mathbf{G}$ and in $\mathbf{G}^{\sigma^k}$ for $k>1$.
 
The group $N_G(S)$ with $S$ a Sylow $p$-subgroup of $G$ is called a Borel subgroup of $G$. Every subgroup $P$ such that $B\subseteq P\subset G$ is called parabolic. 
We have $P=L\cdot O_p(P)$, a semidirect product, and $L$ is called a {\it Levi subgroup} of $P$.

  
 \begin{lemma}\label{Lev} {\rm \cite{Lev1996}} 
Let $G$ be the group $\PSL_n(F)$, where $n \geq 3$, $F$ is a field and $|F| \geq 4$. Assume, further, that if $n=3$, then $F$ is either finite or algebraically closed. Then $\cn(G)=n$.
\end{lemma}

\begin{lemma}\label{ma3} \cite[Lemma 2.3, Propositions 3.5, 3.6, 3.7]{Malcolm21} 
Let $G$ be one of the groups $A_n, n>4$, $\PSL_3(q)$, $\PSU_3(q)$, $(3,q+1)=1$, ${}^2G_2(3^{2m+1})$, $m>0$. Then every element of $G$ is a product of two elements of order $3$. Consequently, $e(G)\leq 3$ for these groups.
\end{lemma}

The second claim of the above lemma follows from Lemma \ref{gr1}(1).

\begin{lemma}\label{u5n} {\rm \cite[Theorem 1.5]{TZ04}} Let $\mathbf{G}$ be a simple algebraic group in characteristic $p>0$, $\si$ a Frobenius endomorphism of $\mathbf{G}$ and $G=\mathbf{G}^\si$ be a finite group of 
Lie type. Suppose that G is not a Suzuki group, and $p> 2$ if $\mathbf{G}$ is of type   
$F_4,E_7$ and $p\neq 2,5$ if $\mathbf{G}$ is of type   
$E_8$. Let $g=su\in G$ , where $u$ is a p-element, s is a $p'$-element and $us=su$.
Then the elements $u^i$ with $(i,p)=1$ lie in at most two $C_G(s)$-conjugacy classes. 
\end{lemma} 

Observe that in  \cite[Def 2.4]{TZ04} semirational $p$-elements  are called half-rational if $p>2$ and  half-*-rational for $p=2$. Indeed, \cite[Def 2.4]{TZ04} states that $g$ is called half-*-rational if $g$ is conjugate to $g^m$ whenever $4|(m-1)$. In particular,
$g$ is not conjugate to $g^{-1}$, and if $g^m$ is not conjugate to $g$ then $4|(m+1)$; then $4|(-m+1)$ so $ g^{-m}$ is not conjugate to $g$.

The  following lemma is a special case of a result by Gow \cite[Theorem 2]{Gow};

\begin{lemma}\label{Gow}  Let $G$ be a simple group of Lie type,
$g\in G$ a regular semisimple element and let $C$ be the conjugacy class containing g. Then $C^2$ contains all non-identity semisimple elements of $G$. Consequently, $e(g)\leq 3$.
\end{lemma}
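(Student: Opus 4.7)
The plan is to deduce both claims directly from Gow's Theorem 2 (in the cited paper), which asserts that for any two conjugacy classes $C_1,C_2$ of regular semisimple elements in a finite simple group of Lie type, the product $C_1C_2$ contains every non-identity semisimple element of $G$. Specializing to $C_1=C_2=C$ yields the first assertion verbatim, so the first statement is essentially a restatement of Gow's result and requires no additional argument beyond citation.

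For the consequence $e(g)\leq 3$, I would proceed as follows. The element $g^{-1}$ is itself regular semisimple, because inversion preserves the order (so $g^{-1}$ remains a $p'$-element) and $C_G(g^{-1})=C_G(g)$ is a $p'$-group by hypothesis. In particular $g^{-1}$ is a non-identity semisimple element of $G$: it is non-identity because $g\neq 1$, which follows from the paper's definition of regular semisimple since $C_G(1)=G$ contains non-trivial unipotent elements and hence fails to be a $p'$-group. By the first part of the lemma applied to $g^{-1}$, there exist $x,y\in G$ with $g^{-1}=g^x g^y$. Multiplying on the left by $g$ gives
\[
g\cdot g^x\cdot g^y=1,
\]
which exhibits the identity as a product of three conjugates of $g$, so $e(g)\leq 3$ by definition.

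There is no substantial obstacle here: the first claim is literally a special case of Gow's theorem (taking the two classes equal), and the bound $e(g)\leq 3$ is a one-line corollary obtained by applying that special case to the semisimple element $g^{-1}$. The only small point requiring attention is confirming that $g^{-1}$ qualifies as a non-identity semisimple element, which is immediate from the paper's own definition of regular semisimple recalled in the preliminaries.
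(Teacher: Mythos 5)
Your proposal is correct and matches the paper's treatment: the first claim is taken directly from Gow's Theorem 2, and the bound $e(g)\leq 3$ follows by applying it to the non-identity semisimple element $g^{-1}$ to write $g^{-1}$ as a product of two conjugates of $g$. The paper leaves this consequence implicit, but your argument is exactly the intended one.
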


This result is complemented  in \cite[Theorem 1.8]{GuT} as follows:

 \begin{lemma}\label{GuT1}   Let $(G,S)$ be one of the following pairs of groups: 

$(1)$ $G=\GL_n(q)$, $n\geq 2$,  $(n,q)\neq (2,2),(2,3)$ and $S=\SL_n(q);$

$(2)$ $G=\U_n(q)$, $n\geq 2$,  $(n,q)\neq (2,2),(2,3),(3,2)$ and $S=\SU_n(q).$

\noindent Let $h\in G$ be a regular semisimple element and let $g\in G$ be a non-central semisimple element such that $g\in h^2S$. Then $g$ is a product of two conjugates of $h$. 
\end{lemma}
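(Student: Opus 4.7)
The strategy is to apply Frobenius's formula for class multiplication: if $C$ denotes the $G$-conjugacy class of $h$, the number of ordered pairs $(x_1,x_2)\in G\times G$ with $h^{x_1}h^{x_2}=g$ equals
\[
N(g)=\frac{|C|^2}{|G|}\sum_{\chi\in\mathrm{Irr}(G)}\frac{\chi(h)^2\,\overline{\chi(g)}}{\chi(1)},
\]
and the whole task reduces to showing $N(g)>0$. The condition $g\in h^2 S$ is manifestly necessary, because $\det(h^{x})=\det(h)$ forces every element of $C^2$ to have determinant $\det(h)^2$; the analogous identity for the determinant map on $\U_n(q)$ handles the unitary case.

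For sufficiency, separate the contribution $1$ of the trivial character and aim to prove $\bigl|\sum_{\chi\ne 1}\chi(h)^2\overline{\chi(g)}/\chi(1)\bigr|<1$. The regular-semisimple hypothesis controls the factor $\chi(h)^2$: by Deligne--Lusztig theory $\chi(h)$ is a short sum of characters of the maximal torus $T=C_G(h)$ evaluated at $h$, giving the pointwise bound $|\chi(h)|\le|W(T)|$ and, via column orthogonality, the average bound $\sum_{\chi}|\chi(h)|^2=|T|$. For the values $\chi(g)$ at the non-central semisimple $g$, I would invoke Gluck-type bounds on character ratios (in the sharpened form obtained for $\GL_n(q)$ and $\U_n(q)$), which yield $|\chi(g)|/\chi(1)\le cq^{-1}$ for every nontrivial $\chi$. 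Combined with the classical lower bound on the smallest nontrivial degree of $G$, this drives the tail below $1$ away from a short list of small $(n,q)$.

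The main obstacle is the boundary regime: the excluded pairs $(n,q)=(2,2),(2,3)$ in the linear case and additionally $(3,2)$ in the unitary case are exactly those where the above estimates are too weak and where the conclusion in fact fails, so sharpness of the hypothesis must be verified. Just above these thresholds a short list of small cases would be handled by direct inspection of the character tables, or by explicit Green-function computations. A secondary technical point, which explains the necessity of assuming that $g$ is non-central, is that characters whose kernel contains $Z(G)$ satisfy $|\chi(z)|=\chi(1)$ for $z\in Z(G)$, so central $g$ contribute terms of full size $\chi(1)$ to the tail and the positivity of $N(g)$ can genuinely fail.
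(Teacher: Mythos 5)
First, note that the paper does not prove this statement at all: it is quoted verbatim as \cite[Theorem 1.8]{GuT} (Guralnick and Tiep, \emph{An extension of Gow's theorem}), so there is no internal proof to compare your attempt against. Your outline follows the expected general template for such results --- the Frobenius class-multiplication formula together with character estimates --- which is indeed the framework behind Gow's theorem and its extension, and you correctly identify why the hypotheses $g\in h^2S$ and $g$ non-central are necessary.

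However, the argument has a genuine quantitative gap at its only substantive step. You must show $\bigl|\sum_{\chi\neq 1}\chi(h)^2\overline{\chi(g)}/\chi(1)\bigr|<1$, but the three bounds you invoke do not combine to give this. Using $|\chi(g)|/\chi(1)\le cq^{-1}$ together with the orthogonality relation $\sum_\chi|\chi(h)|^2=|C_G(h)|=|T|$ yields only an upper bound of order $c|T|/q$; for $G=\GL_n(q)$ the torus $T=C_G(h)$ can have order about $q^n$, so this bound is roughly $cq^{n-1}$, nowhere near $1$. The pointwise bound $|\chi(h)|\le|W(T)|$ does not help, because the number of irreducible characters not vanishing at a regular semisimple element is itself of order $|T|$. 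Moreover, no sharpening of the Gluck-type ratio bound can rescue the computation on its own: for $g=\diag(\mu,1,\ldots,1)$ non-central there are characters $\chi$ of degree about $q^{n-1}$ (constituents of the permutation character on the projective space) with $|\chi(g)|/\chi(1)$ genuinely of order $q^{-1}$, so the stated ratio bound is essentially sharp. Closing the gap requires exploiting which characters can be simultaneously large at $h$ and at $g$, via the Deligne--Lusztig/Green parametrization of the characters of $\GL_n(q)$ and $\U_n(q)$ (only constituents of the $R_T^{\theta}$ for the single torus $T=C_G(h)$ contribute, with controlled multiplicities, and one needs a matching analysis of the values at $g$). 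That analysis is precisely the content of the cited Guralnick--Tiep paper; in your proposal the decisive inequality is asserted rather than proved, and the particular estimates you name are insufficient to establish it.
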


\begin{corol}\label{GuT} Let $(G,S)$ and h be  as in Lemma {\rm \ref{GuT1}}. Suppose that $h^{-1}\in h^2SZ(G)$. Then  there are two elements   $x,y\in S$ such that $hxhx^{-1}yhy^{-1}\in Z(G)$.\end{corol}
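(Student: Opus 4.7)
The plan is to apply Lemma \ref{GuT1} to the auxiliary element $g := h^{-1}z$ for a suitably chosen $z \in Z(G)$, factor $g$ as a product of two $G$-conjugates of $h$, and then convert the conjugating elements into elements of $S$ via the equality $G = T\cdot S$, where $T := C_G(h)$.

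First, I choose $z$. The hypothesis $h^{-1} \in h^2 S Z(G)$ rewrites as $h^{-3} = s_0 z_0$ for some $s_0 \in S$ and $z_0 \in Z(G)$. Set $z := z_0^{-1}$ and $g := h^{-1} z$. Since $z$ is central, $g\cdot h^{-2} = zh^{-3} = z_0^{-1}(s_0 z_0) = s_0 \in S$, so $g \in h^2 S$. Moreover $g$ is semisimple (being a central translate of $h^{-1}$) and non-central: if $g \in Z(G)$ then $h \in Z(G)$, contradicting regularity of $h$, which forces $C_G(h)$ to be a proper maximal torus in $\GL_n(q)$ or $\U_n(q)$ for $n \geq 2$. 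Lemma \ref{GuT1} now produces $a_1, a_2 \in G$ with $g = h^{a_1} h^{a_2}$.

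Next I promote $a_1, a_2$ to elements of $S$. Since $h^a$ depends only on the coset $Ta$, writing $a_i = t_i s_i$ with $t_i \in T$ and $s_i \in S$ yields $h^{a_i} = s_i^{-1}(t_i^{-1} h t_i)s_i = h^{s_i}$. Such a decomposition is available for every $a_i \in G$ as soon as $G = T\cdot S$, which holds because the determinant (for $G = \GL_n(q)$) or its restriction to elements of norm $1$ (for $G = \U_n(q)$) maps any maximal torus $T$ surjectively onto $G/S$: this follows from the standard decomposition $T \cong \prod_i \mathbb{F}_{q^{n_i}}^{\times}$ (resp.\ its unitary analogue) and the surjectivity of the relevant norm maps on each factor. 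Setting $x := s_1^{-1}$ and $y := s_2^{-1}$ in $S$, we have $xhx^{-1} = h^{s_1}$ and $yhy^{-1} = h^{s_2}$, and hence
$$
hxhx^{-1}yhy^{-1} = h \cdot h^{s_1} \cdot h^{s_2} = h \cdot g = h \cdot h^{-1} z = z \in Z(G),
$$
as required.

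The only non-formal point is the equality $G = T\cdot S$, which I expect to be the principal obstacle. Once it is granted, the argument is just bookkeeping around the central twist $z$ and Lemma \ref{GuT1}.
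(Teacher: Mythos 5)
Your proof is correct and follows essentially the same route as the paper: apply Lemma \ref{GuT1} to the central twist $zh^{-1}\in h^2S$ and then move the conjugating elements into $S$. The equality $G=C_G(h)\cdot S$ that you flag as the only non-formal point is exactly what the paper invokes, in the form that $\det C_G(s)$ exhausts $\GL_1(q)$ (resp.\ $\U_1(q)$), so that $S$- and $G$-conjugacy classes of semisimple elements coincide; your norm-map justification of it is sound.
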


\begin{proof} It is well known that $\det C_G(s)$ contains all elements of $\GL_1(q) $ 
 or
$\U_1(q)$ for a semisimple element $s$ of $G$ and $G=\GL_n(q)$ or $\U_n(q)$,  respectively. It follows that   
the $S$- and $G$-conjugacy classes of semisimple elements  of $G$ coincide. In addition, $zh^{-1}\in h^2S$ for some $z\in Z(G)$. So, by  Lemma  \ref{GuT1}, $zh^{-1}=xhx^{-1}yhy^{-1}$ for     some  $x,y\in S$. So the result follows.\end{proof}

\begin{lemma}\label{5eg} {\rm \cite[Theorem H, p.344]{egh}}
Let $G$ be a simple group of Lie type in defining characteristic $p>0$, and $g\in G$. Then $g$ is a product of two unipotent elements. Consequently, $e(G)\leq d$, where $d$ is the exponent of 
a Sylow p-subgroup of $G$.
\end{lemma}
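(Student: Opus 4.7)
The statement has two parts, and they are of very different depth. The first claim (every $g\in G$ is a product of two unipotent elements) is the main theorem of \cite{egh} and is the serious content here. For a proof plan I would simply invoke \cite[Theorem H]{egh} rather than attempt to reproduce that argument, since a reproof would take us through the entire machinery of Ellers--Gordeev--Herzog on conjugacy class products in finite groups of Lie type (in particular the reduction to cases like $\PSL_n(q)$ and the use of the explicit factorizations of semisimple/regular unipotent elements). This is by far the main obstacle.

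For the ``Consequently'' clause, the plan is to combine part (1) with Lemma \ref{gr1}(1). Given an arbitrary $g\in G$, write $g=u_1u_2$ with $u_1,u_2$ unipotent, as provided by part (1). Each $u_i$ lies in some Sylow $p$-subgroup of $G$, so $o(u_i)\mid d$, and hence $\mathrm{lcm}(o(u_1),o(u_2))\mid d$. Lemma \ref{gr1}(1) then gives
\[
e_G(g)=e_G(u_1u_2)\leq \mathrm{lcm}(o(u_1),o(u_2))\leq d.
\]
Taking the maximum over $g\in G$ yields $e(G)\leq d$, as claimed.

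So operationally my ``proof'' consists of one citation plus one application of Lemma \ref{gr1}(1); the genuine difficulty sits entirely inside the cited factorization theorem. No further case analysis is required, and the bound is uniform in the type of $G$ once $d$ is known (note that $d$ is typically a small power of $p$, determined by the Coxeter number and the torsion primes of the ambient algebraic group, so the resulting bound $e(G)\leq d$ is quite crude but cheap). The only subtlety worth flagging is that we use $e_G$ with respect to the same ambient group $G$ in which the factorization $g=u_1u_2$ takes place, so no passage between $G$ and its covers or quotients is needed.
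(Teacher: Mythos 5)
Your proposal is correct and follows exactly the paper's route: the factorization $g=u_1u_2$ into two unipotent elements is quoted from \cite[Theorem H]{egh}, and the bound $e(G)\leq d$ then follows from Lemma \ref{gr1}(1) since the order of each unipotent factor divides the exponent $d$ of a Sylow $p$-subgroup. Nothing is missing.
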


The second claim follows from the first one and Lemma \ref{gr1}(1). 

\begin{lemma}\label{un2} Let $G$ be one of the groups $\PSL_n(q)$, $\PSU_n(q)$ with $n>1$. Suppose that $(n,q)=1$. 
Then every semisimple element $s\in G$ is a product of two elements of order $n$. %
Consequently, $e(s)\leq n$. \end{lemma}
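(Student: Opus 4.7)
The plan is to deduce this from Gow's theorem (Lemma~\ref{Gow}) together with Lemma~\ref{gr1}(1): it would suffice to exhibit, in each of $G=\PSL_n(q)$ and $G=\PSU_n(q)$, a regular semisimple element $x$ of order exactly $n$. Granting this, Lemma~\ref{Gow} expresses any non-identity semisimple $s\in G$ as a product $x_1x_2$ of two $G$-conjugates of $x$, and Lemma~\ref{gr1}(1) gives $e(s)\leq \mathrm{l.c.m.}(n,n)=n$. The case $s=1$ is trivial (write $1=xx^{-1}$).

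The core work is the construction of $x$. Since $(n,q)=1$, I fix a primitive $n$-th root of unity $\zeta\in\overline{\mathbb{F}}_q$. For $n$ odd, I would take the semisimple class in $\GL_n(\overline{\mathbb{F}}_q)$ whose eigenvalue multiset is $\{1,\zeta,\zeta^2,\ldots,\zeta^{n-1}\}$, the full set of $n$-th roots of unity. This multiset is invariant under $\lambda\mapsto\lambda^q$ and under $\lambda\mapsto\lambda^{-q}$, because multiplication by $\pm q$ permutes $\mathbb{Z}/n\mathbb{Z}$ (as $(n,q)=1$); hence the class is defined over $\mathbb{F}_q$ in the $\SL_n(q)$-case and is stable under the unitary Frobenius in the $\SU_n(q)$-case. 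The determinant is $\zeta^{n(n-1)/2}=1$, so a representative lies in $\SL_n(q)$ or $\SU_n(q)$, and its image $x\in G$ has distinct eigenvalues and order $n$, hence is regular semisimple of order $n$.

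For $n$ even the hypothesis $(n,q)=1$ forces $q$ odd; here I would instead pick a primitive $2n$-th root of unity $\eta$ with $\eta^2=\zeta$ and use the eigenvalue multiset $\{\eta,\eta^3,\ldots,\eta^{2n-1}\}$ of odd powers of $\eta$. Since $\pm q$ is odd, multiplication by $\pm q$ preserves the set of odd residues modulo $2n$, so the class is again rational in the required sense. The product of the eigenvalues is $\eta^{1+3+\cdots+(2n-1)}=\eta^{n^2}=1$, placing a representative in $\SL_n(q)$ or $\SU_n(q)$. A short computation shows $x^n$ has every eigenvalue equal to $\eta^n=-1$, so $x^n=-I$ is central, while no smaller positive power of $x$ is central (that would force $\eta^{2k}=1$, i.e.\ $n\mid k$); thus the image in $G$ has order exactly $n$ and is regular semisimple.

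The main obstacle I anticipate is the even-$n$ bookkeeping, where one must simultaneously arrange Galois/unitary invariance of the eigenvalue multiset, determinant equal to $1$, and order exactly $n$ (rather than $2n$) in the projective quotient. Once the element $x$ is in hand, Gow's theorem does all the remaining work, and nothing else is required.
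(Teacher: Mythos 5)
Your proposal is correct and follows essentially the same route as the paper: both arguments produce a regular semisimple element of $G$ whose eigenvalues are the $n$-th roots of unity when $n$ is odd (the paper realizes it as a cyclic permutation matrix on an orthonormal basis) and the odd powers of a primitive $2n$-th root of unity when $n$ is even (the paper twists the permutation matrix by $\diag(-1,\Id_{n-1})$ so that its $n$-th power is the scalar $-\Id$), and then conclude via Lemma~\ref{Gow} and Lemma~\ref{gr1}(1). The element you specify by its eigenvalue multiset is conjugate to the paper's explicit matrix, so the two proofs differ only in presentation.
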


\begin{proof} We consider only the case of $G=\PSU_n(q)$, the other case is similar.

Let $H=\SU_n(q)=\SU(V)$    and let $h\in \U(V)$
be a matrix  permuting cyclicly the elements of an orthonormal basis of $V$.
If $n$ is odd then $\det h=1$. If $n$ is even then $q$ is odd and $t:=\diag(-1,\Id_{n-1})\in \U(V)$ (under the same basis). Then $(th)^n=-\Id$ is scalar. Let $g\in G$ be the projection of 
$ h$. Then $|g|=n$. Note that  $h$ is regular in $H$ as $h$ has $n$ distinct eigenvalues, since $(n,q)=1$.
Therefore, $g$ is regular in $G$. By Lemma \ref{Gow}, $s$ is a product of two conjugates of $h$. By Lemma \ref{gr1}(1), $e(g)\leq n$.    
\end{proof}

\begin{lemma}\label{u42} Let $G=\PSU_n(q),n>1$. Suppose that    $(n,q)>1$.  
Then every semisimple element $s\in G$ is a product of two elements of order   $n+1$. 
Consequently, $e(s)\leq n+1$. \end{lemma}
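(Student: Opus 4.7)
The plan is to mimic the proof of Lemma~\ref{un2}, replacing the cyclic permutation matrix (which fails to be semisimple when $p\mid n$) with an element of order $n+1$. Writing $p$ for the characteristic of $\mathbb{F}_q$, the hypothesis $(n,q)>1$ forces $p\mid n$, whence $p\nmid n+1$ and $\overline{\mathbb{F}}_q$ contains a primitive $(n+1)$-th root of unity $\zeta$.

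I would first produce a regular semisimple $h\in\GU_n(q)$ with eigenvalue multiset $\{\zeta^i:1\le i\le n\}$, equivalently with characteristic polynomial $(X^{n+1}-1)/(X-1)$. Existence follows from the standard parametrisation of semisimple classes: a semisimple class of $\GL_n(\overline{\mathbb{F}}_q)$ meets $\GU_n(q)$ exactly when its eigenvalue multiset is fixed (as a multiset) by $\lambda\mapsto\lambda^{-q}$, which holds here because $\gcd(q,n+1)=1$ makes $i\mapsto -qi$ a permutation of $\mathbb{Z}/(n+1)\setminus\{0\}$. The resulting $h$ has $n$ distinct eigenvalues and therefore is regular semisimple and of order exactly $n+1$.

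Next I would pass to $\SU_n(q)$ and check the order in $\PSU_n(q)$. A direct computation gives $\det h=\zeta^{n(n+1)/2}$, equal to $1$ when $n$ is even and to $-1$ when $n$ is odd. If $n$ is even, or if $n$ is odd and $q$ is even (so $-1=1$ in $\mathbb{F}_q$), then $h\in\SU_n(q)$ already. Otherwise ($n$ and $q$ both odd) replace $h$ by $-h$: this lies in $\SU_n(q)$ (determinant $(-1)^n(-1)=1$), is still regular semisimple, and has order $\mathrm{lcm}(n+1,2)=n+1$ since $n+1$ is even. In either case $h^k$ (resp.\ $(-h)^k$) is scalar only when $(n+1)\mid k$, so the image $\bar h\in\PSU_n(q)$ also has order $n+1$.

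Finally, I would apply Lemma~\ref{Gow} to $\bar h$: any non-identity semisimple $s\in\PSU_n(q)$ lies in $C\cdot C$ where $C$ is the conjugacy class of $\bar h$, so $s=h_1h_2$ with $|h_1|=|h_2|=n+1$; Lemma~\ref{gr1}(1) then gives $e(s)\le\mathrm{lcm}(n+1,n+1)=n+1$. The main technical step is the first one---realising the prescribed eigenvalue multiset inside $\GU_n(q)$---which is cleanest via the general semisimple-class parametrisation; an alternative would be to exhibit $h$ as the companion matrix of $(X^{n+1}-1)/(X-1)$ equipped with a suitable $h$-invariant Hermitian form.
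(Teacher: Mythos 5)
Your proof is correct, and the overall strategy coincides with the paper's: exhibit a regular semisimple element of $\PSU_n(q)$ of order $n+1$ whose eigenvalues are $n$ of the $(n+1)$-th roots of unity, then quote Lemma \ref{Gow} to write any nontrivial semisimple $s$ as a product of two of its conjugates, and finish with Lemma \ref{gr1}(1). The only real difference is how the element is produced. The paper builds it concretely: it takes the cyclic permutation matrix $x$ of an orthonormal basis inside $\U_{n+1}(q)$, observes that since $(n+1,q)=1$ the eigenvalue $\mu=(-1)^n=\det x$ has a one-dimensional, hence non-degenerate, eigenspace $W$, and restricts $x$ to $W^\perp$ to get $h\in\SU_n(q)$ with the remaining $n$ roots of unity as eigenvalues. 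You instead invoke the parametrisation of semisimple classes of $\GU_n(q)$ by eigenvalue multisets stable under $\lambda\mapsto\lambda^{-q}$ to realise the characteristic polynomial $(X^{n+1}-1)/(X-1)$, and then correct the determinant by the sign $-1$ when $n$ and $q$ are both odd. Note that after your sign adjustment the eigenvalue set becomes exactly the $(n+1)$-th roots of unity with $(-1)^n$ deleted, so the two constructions produce the same conjugacy class; the paper's version buys an elementary, self-contained existence argument (the Hermitian form is visible from the start), while yours is shorter but leans on the general class-parametrisation machinery and the follow-up verification that a compatible Hermitian form exists. Your determinant computation, the order computations for $h$, $-h$ and their images in $\PSU_n(q)$, and the observation that the case ``$n$ odd, $q$ even'' is vacuous under $(n,q)>1$ are all accurate.
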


\begin{proof}  
Note that $(n+ 1,q)=1$. Let $x\in \U_{n+1}(q) = \U_{n+1}(V)$ be the matrix permuting cyclically an orthonormal basis of $V$. Then $x\in S_{n+1}$, the group of all permutations of the basis.   Let $\mu=(-1)^{q}. $ As $(n+1,q)=1$,   
the multiplicity of the eigenvalue $\mu$ of $x$ equals 1. So the $\mu$-eigenspace $W$ of $x$ is non-degenerate, and hence $V=W^\perp\oplus W$. 
As $\det x=\mu$, we have $x=\diag(h,\mu)$,
where $h\in \SU_n(q)$. Then $h$ is regular since $h$ has $n$ distinct eigenvalues over $\overline{\mathbb{F}}_q$.
Let $g$ be the projection of $h$ in $G$. Then $|g|=n+1$.   
By Lemma \ref{Gow}, $s$ is a product of two conjugates of $g$. By Lemma \ref{gr1}, $e(s)\leq n+1$.\end{proof}

\begin{lemma}\label{sn2} {\rm \cite[Proposition  3.1(ii)]{TZ05}} Let $G$ be a simple group of Lie type.
Suppose that $G$ is not in the following list: $\PSL_n(q)$, $n>2$; $\PSU_n(q)$, $n>2$; $\Omega^\pm_{2n}(q)$, $n$ odd; $E_6(q)$; ${}^2E_6(q)$. Then every semisimple element of $G$ is real.
\end{lemma}

\begin{lemma} \label{ev1} {\rm \cite[Theorem 1.8]{TZ04}} Suppose that q is even and let $G$ be a group in the following list: $\SL_n(q)$, $\SU_n(q)$, $\Sp_{2n}(q)$, $\Omega^\pm _{2n}(q)$, ${}^3D_4(q)$, $E_6(q)$, ${}^2E_6(q)$, $G_2(q)$. Then the unipotent elements of $G$ are rational.
\end{lemma}

\begin{lemma}\label{ru5} Let $G$ be a simple group of Lie type in defining characteristic $p>0$, and $g\in G$. Suppose
that $e(g)>3$. Then $g\in P$ for some maximal parabolic subgroup P of G and $O_p(\langle g\rangle)\subseteq O_p(P)$. In addition, the elements of $O_p(P)$ are not regular unipotent unless $G$ is in the following list: $\PSL_2(q)$, $\Sz(q) = {^2}B_2(q)$, ${^2}G_2(q)$.
\end{lemma}

\begin{proof} By Lemma \ref{Gow},  $g$ is not a regular semisimple element of $G$. 
Therefore, $C_G(g)$ contains a unipotent element $u\neq 1$, say, and we choose $u$ to be a generator of $ O_p(\langle g\rangle)$ whenever     this group is non-trivial.

 Set $K= \langle g\rangle$. By a Borel-Tits theorem \cite[Theorem 26.5]{MTe}, %
$K$ is contained in a parabolic subgroup 
$P$, say, of $G$, and $O_p(K)\subseteq O_p(P)$. In particular, $u\in O_p(P)$.

For the additional claim, suppose the contrary, that $v\in U:=O_p(P)$ for some regular unipotent element $v$ of $G$.  If $L:=P/U$ contains no unipotent element then $P$ is a Borel subgroup of $G$ \cite[\S 26.1]{MTe}.  In this case $G$ is of BN-pair rank 1, and hence $G\in \{\PSL_2(q), \Sz(q) = {^2}B_2(q), {^2}G_2(q)\}$.
Otherwise,  the projection of $v$ into $P/U$ is non-trivial by \cite[Lemma 2.6]{tz13},
which is a contradiction.\end{proof}

Let $G$ be a finite simple group of Lie type in defining characteristic $p$.
 In the proof of the following lemma we use \cite[Theorem 1.5]{TZ04} which requires $p$, the defining characteristic of $G$, to be almost good. According to \cite[p. 328]{TZ04}, $p$ is almost good for $G$ unless $p=2$ and $G=\Sz(q)$,  $F_4(q)$, ${}^2F_4(q)$, $E_7(q)$, and $p=2,5$ for $E_8(q)$. 

\begin{lemma} \label{ee7} Let $G$ be a finite simple group of Lie type in characteristic $p>2$. Then $e(G) \leq 6$ unless possibly one of the following holds:

$(1)$ $G = E_6(q)$, ${^2}E_6(q);$

$(2)$ $G=E_8(q)$, $p=5;$

$(3)$ $G=\PSL_n(q)$, $n > 2;$

$(4)$ $G=\PSU_n(q)$, $n > 2$.

\noindent
In particular,  if $q$ is odd and
$G$ is $\Omega^\pm_{2n}(q)$ with $n$ even, $\Omega_{2n+1}(q)$ or $\PSp(2n,q)$,   
then $e(G)\leq 6$.
\end{lemma}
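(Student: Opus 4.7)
My plan is to combine Jordan decomposition with Lemma~\ref{xy8}. For $g \in G$, write $g = su = us$ with $s$ semisimple and $u$ unipotent. Taking $x = s$, $y = u$ in Lemma~\ref{xy8} yields
\[
e_G(g) \leq m \cdot e_G(s^m), \qquad m := e_{C_G(s)}(u),
\]
so the bound $e(G) \leq 6$ will follow from $m \leq 3$ together with $e_G(s^m) \leq 2$.

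For the unipotent factor, since $p > 2$, the element $u$ has odd prime power order (or $u = 1$, which trivially gives $m = 1$). Lemma~\ref{u5n} applies in every odd characteristic except for $G = E_8(q)$ with $p = 5$, and asserts that the generators of $\langle u \rangle$ lie in at most two $C_G(s)$-conjugacy classes. Since $[u]$ is one of them and $u^{-1}$ is itself a generator, either $u \sim u^{-1}$ in $C_G(s)$ (whence $m \leq 2$), or the two classes are exactly $[u]$ and $[u^{-1}]$, in which case $u$ is semirational in $C_G(s)$ and Corollary~\ref{cor881} gives $m \leq 3$. Either way, $m \leq 3$.

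For the semisimple factor, $s^m$ is semisimple, so $e_G(s^m) \leq 2$ provided every semisimple element of $G$ is real; by Lemma~\ref{sn2} this holds unless $G$ lies in $\{\PSL_n(q), \PSU_n(q)$ for $n>2$, $\Omega^\pm_{2n}(q)$ for $n$ odd, $E_6(q)$, ${}^2E_6(q)\}$. Combining the two bounds gives $e(G) \leq 6$ outside the union of the two exclusion lists, which accounts for the exceptions (1), (3), (4) of the statement via Lemma~\ref{sn2} and exception (2) via the $E_8$ restriction in Lemma~\ref{u5n}; the one remaining case $\Omega^\pm_{2n}(q)$ with $n$ odd is absorbed by Theorem~\ref{orth4}, which actually yields $e(G) \leq 4$. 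The ``in particular'' clause then follows because each of the classical groups listed there ($\Omega^\pm_{2n}(q)$ with $n$ even, $\Omega_{2n+1}(q)$, $\PSp_{2n}(q)$ for $q$ odd) avoids both exclusion lists, so the Jordan decomposition argument applies directly. The most delicate step is passing from the ``at most two conjugacy classes'' conclusion of Lemma~\ref{u5n} to semirationality of $u$ in $C_G(s)$, but the case analysis above handles it cleanly.
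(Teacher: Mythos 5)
Your proof is correct and follows essentially the same route as the paper's: Jordan decomposition $g=su$, Lemma \ref{u5n} plus Corollary \ref{cor881} to get $e_{C_G(s)}(u)\leq 3$, reality of the semisimple part via Lemma \ref{sn2}, and Lemma \ref{xy8} to multiply the two bounds into $e(g)\leq 6$. You are in fact slightly more careful than the paper in two spots: you justify the passage from ``the generators lie in at most two classes'' to semirationality by splitting on whether $u\sim u^{-1}$ in $C_G(s)$ (so that the remaining case really forces the two classes to be $[u]$ and $[u^{-1}]$), and you notice that $\Omega^{\pm}_{2n}(q)$ with $n$ odd appears in the exclusion list of Lemma \ref{sn2} but not among the exceptions of the present lemma, patching this via Theorem \ref{orth4}, whose $q$-odd case rests on the involution-width result and is independent of this lemma, so there is no circularity.
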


\begin{proof}
Let $g=su$, where $s$ is semisimple, $u$ is unipotent and $s,u\in \langle g \rangle$. By Lemma \ref{sn2}, $s$ is real.   Recall that $p>2$ is almost good for $G$ except $p=5$ for $E_8(q)$.    Then,  by  Lemma \ref{u5n},  
the generators of $\langle u \rangle$ lie in at most two conjugacy classes of $C_G(s)$. By Corollary \ref{cor881}, the generalized order of $u$ in $C_G(s)$ is at most $3$. Since $s$ is real, $s^3$ is real too, so Lemma \ref{xy8} implies that $e(g) \leq 2 \cdot 3 = 6$.
\end{proof}

\section{Conjugacy class exponent of semirational elements }

Our interest in semirational elements (see Definition \ref{defsemirational}) is derived from the fact that the overwhelming majority of unipotent elements in groups of Lie type are semirational (Lemma \ref{u5n}). These c

In this section $G$ is an arbitrary finite group.
 
\begin{lemma} \label{i1i2i}
Let $g \in G$ be a semirational element of order $n$. Let 
$$\begin{array}{l}
I=\{m\ |\ 1 \leq m \leq n,\ (m,n)=1\}, \\
I_1=\{m \in I\ |\ g \sim g^m\}, \\
I_2=\{m \in I\ |\ g^{-1} \sim g^m\}.
\end{array}$$
We identify each integer with its reduction modulo $n$. Then $1 \in I_1$, $-1 \in I_2$ and $I_1 \cup I_2 = I$. If $h$ is a positive integer such that $e(g) > h+1$ then, for every $m_1,\ldots,m_h \in I_1$ such that $m_1+\cdots+m_h \in I$, we have $m_1+\ldots+m_h \in I_1$.\end{lemma}

\begin{proof}
Assume $m_1,\ldots,m_h \in I_1$ satisfy $m_1+\cdots+m_h \in I \setminus I_1$. By assumption $m_1+\cdots+m_h \in I_2$ so $m=-m_1-\cdots-m_h \in I_1$ and $g^{m_1} \cdots g^{m_h} g^m = 1$. Since $m_1,\ldots,m_h,m \in I_1$, this implies $e(g) \leq h+1$, a contradiction.\end{proof}

\begin{prop} \label{semir1}
Let $g \in G$ be a semirational element of order $n$. Write $n=\prod_{i=1}^k p_i^{a_i}$ with the $p_i$ pairwise distinct primes. If $p_i \geq k+2$ for all $i=1,\ldots,k$, then $e(g) \leq 3$.\end{prop}

So for example if $n=5^a 7^b 11^c$ then $e(g) \leq 3$.

\begin{proof}
Assume by contradiction that $e(g) > 3$. We use the notation of Lemma \ref{i1i2i} and, in particular, we identify each integer in $I$ with its residue modulo $n$. If $I_1=I$ then $e(g) \leq 2$, so we may assume that $I_1 \neq I$. Since $p_i \geq k+2$ for all $i$, we have $1,2,\ldots,k+1 \in I$. By Lemma \ref{i1i2i} we may assume that, whenever $a,b \in I_1$ are such that $a+b \in I$, we have $a+b \in I_1$. So, since $1 \in I_1$ and $1,2,\ldots,k+1 \in I$, we have $2=1+1 \in I_1$, $3=2+1 \in I_1$ and so on until $k+1$, so $1,\ldots,k+1 \in I_1$. If $m \in \mathbb{N}$ then $I \cap \{m+1,\ldots,m+k+1\} \neq \varnothing$. Indeed, if by contradiction this is false, then for all $i=1,\ldots,k+1$ there exists a prime $p_{j(i)} \in \{p_1,\ldots,p_k\}$ which divides $m+i$. Clearly, there exist two distinct $i_1,i_2 \in \{1,\ldots,k+1\}$ such that $p_{j(i_1)} = p_{j(i_2)}$, in other words there exists a prime $p$ dividing $n$ and dividing $m+i_1$, $m+i_2$ for some $i_1,i_2$, so $p$ divides $i_1-i_2$. This is impossible because $|i_1-i_2| \leq k$ and $p > k$. Therefore there exists $f(m) \in \{1,\ldots,k+1\} \subseteq I_1$ such that $m+i \not \in I$ for all $i$ with $0 < i < f(m)$ and $m+f(m) \in I$. Define a sequence $(\ell_k)_{k \geq 1}$ by $\ell_1:=1$ and $\ell_{k+1} := \ell_k+f(\ell_k)$. Since $1 \in I_1$, we have $\ell_k \in I_1$ for all $k \geq 1$ by Lemma \ref{i1i2i}, on the other hand it is clear that $\{\ell_k\ |\ k \geq 1\} = I$ so $I_1=I$, a contradiction.
\end{proof}

For a positive integer $n$, set $j(n)$ to be $\max_{m \in \mathbb{N}} (a_{m+1}-a_m)$ where $\{a_m\}$ is the increasing sequence of all positive integers coprime to $n$.
In other words, $j(n)$ is the maximal distance between ``two consecutive numbers coprime to $n$'', where $a,b$ are called ``consecutive coprime to $n$'' if they are coprime to $n$ (meaning that $(a,n)=(b,n)=1$) and no integer $m$ such that $a<m<b$ is coprime to $n$. Note that $j(n)$ is well defined because it can be calculated working modulo $n$. For example, if $n$ is a prime power, then $j(n)=2$. Observe that, if $n=\prod_{i=1}^k p_i^{a_i}$ where each $p_i$ is a prime number, then $j(n)=j(p_1 \ldots p_k)$.  

Note that $j(n)$ equals the smallest positive integer $m$ such that every sequence of $m$ consecutive positive integers contains an integer coprime to $n$. The function $j$ is called the \textit{Jacobsthal function} and there is literature about it. It is unbounded since, denoting by $p_i$ the $i$-th prime number (starting from $p_1=2$) we have $j(p_1 \cdots p_n) \geq p_n$ for every $n \geq 1$.

\begin{prop} \label{semir2}
Let $g \in G$ be a semirational element of order $n$. Then $e(g) \leq j(n)+2$.
\end{prop}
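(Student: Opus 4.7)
The plan is to run a proof by contradiction, assuming $e(g) \geq j(n)+3$ and then showing, via the previous Lemma \ref{i1i2i}, that $I_1 = I$, which forces $g$ to be real and hence $e(g)\leq 2$, a contradiction.

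Concretely, I would adopt the notation of Lemma \ref{i1i2i} and list the elements of $I$ in increasing order, $I=\{a_1<a_2<\cdots<a_r\}$, noting that $a_1=1$ and that $1\in I_1$ since $g\sim g^1$. Since $g$ is semirational we have $I=I_1\cup I_2$, so it will suffice to prove $I_1=I$.

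The core step is an induction on $j$ showing $a_j\in I_1$. Assume $a_{j-1}\in I_1$. By the definition of the Jacobsthal function, the gap satisfies $i:=a_j-a_{j-1}\leq j(n)$. Now write
\[
a_j \;=\; a_{j-1}+\underbrace{1+1+\cdots+1}_{i \text{ copies}},
\]
which exhibits $a_j$ as a sum of $h:=1+i$ elements of $I_1$ (namely $a_{j-1}$ together with $i$ copies of $1$), whose total sum $a_j$ lies in $I$. Note that the intermediate partial sums need not lie in $I$, but Lemma \ref{i1i2i} only demands that the total sum does. Since $h\leq 1+j(n)$ and our contradiction hypothesis gives $e(g)\geq j(n)+3 \geq h+2 > h+1$, Lemma \ref{i1i2i} applies and yields $a_j\in I_1$, completing the induction.

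Therefore $I_1=I$; in particular $n-1\in I_1$, so $g\sim g^{n-1}=g^{-1}$, i.e.\ $g$ is real, whence $e(g)\leq 2$. But $j(n)+3\geq 3 > 2$, contradicting $e(g)\geq j(n)+3$. Hence $e(g)\leq j(n)+2$, as claimed. There is no serious obstacle here: once Lemma \ref{i1i2i} is in place, the only point requiring care is choosing a decomposition of $a_j$ whose length stays within the range $h\leq j(n)+1$ permitted by the hypothesis $e(g)>h+1$; the gap bound built into the definition of $j(n)$ is precisely what makes the decomposition $a_{j-1}+1+\cdots+1$ fit.
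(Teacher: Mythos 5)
Your proposal is correct and follows essentially the same route as the paper: both invoke Lemma \ref{i1i2i} with the decomposition of the next coprime residue as the previous one plus $a_j-a_{j-1}\leq j(n)$ copies of $1$, giving at most $j(n)+1$ summands in $I_1$, and both iterate along consecutive elements of $I$ starting from $1$ to conclude $I_1=I$, contradicting non-reality. The only difference is presentational (explicit induction on an enumeration of $I$ versus the paper's ``iterating this argument'').
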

For example, if $n=3^a 5^b$, then $e(g) \leq j(n)+2=5$.
\begin{proof}
We use Lemma \ref{i1i2i} and the notation therein. We may assume that $I_1 \neq I$ (otherwise $e(g) \leq 2$) and that the conclusion of the lemma holds for $h=j(n)+1$. Let $x,y \in I$, with $x \in I_1$, and assume that $x,y$ are ``consecutive'' in $I$, meaning that $x<y$ and no integer strictly between $x$ and $y$ belongs to $I$. If we show that $y \in I_1$ we are done, because iterating this argument (starting from $x=1$) leads to $I_1=I$, a contradiction. We have $y-x \leq j(n)$ by definition of $j(n)$. Since $1 \in I_1$, Lemma \ref{i1i2i} implies that $y=x+(y-x) = x+1+\ldots+1 \in I_1$ since the number of summands on the right-hand side is at most $j(n)+1$.
\end{proof}

\begin{corol} \label{cor881}
Let $g \in G$ be a semirational element of order $n$. If $n$ is an odd prime power, then $e(g) \leq 3$. If $n$ is a power of $2$ then $e(g) \leq 4$.
\end{corol}

\begin{proof}
If $n$ is an odd prime power, we apply Proposition \ref{semir1} with $k=1$. If $n$ is a power of $2$, then Proposition \ref{semir2} implies that $e(g) \leq j(n)+2 = 4$.
\end{proof}
Note that this is best possible since, if $n \in \{3,4\}$ and $g \in G$ is a central element of order $n$, then $g$ is semirational and $e(g)=n$.

In this context it can be interesting to compute  $e_N(g)$, where $N=N_G(\langle g\rangle)$ and $g$ is semirational.  By Proposition \ref{semir2}, we have  $e_N(g)\leq j(|g|)+2$, however this bound is not best possible in view of Proposition \ref{semir1}. One may ask if $e_N(g)$ is always bounded from above by a constant.

\section{Sporadic groups}

For sporadic simple groups we use the notation of \cite{at}.

\begin{theor} \label{sporadicthm}
Let $G$ be a sporadic simple  group. If $G \in \{J_1,J_2\}$ then $e(G)=2$. Otherwise, $e(G)=3$.
\end{theor}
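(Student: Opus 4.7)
The plan is to split the statement into the two cases dictated by Theorem \ref{un4}.

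First, for $G\in\{J_1,J_2\}$, every element of $G$ is real by Theorem \ref{un4}, so $g^{-1}\in C$ whenever $g\in C$, giving $1\in C^2$ for every conjugacy class $C$. Since $G$ is non-trivial we also have $e(G)\geq 2$, hence $e(G)=2$. Conversely, every other sporadic simple group $G$ admits at least one non-real conjugacy class (again by Theorem \ref{un4}), so there exists $g\in G$ with $e(g)\geq 3$, which gives $e(G)\geq 3$. It remains to prove the matching upper bound $e(G)\leq 3$ for every sporadic group $G\notin\{J_1,J_2\}$.

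For the upper bound I would use the character-theoretic criterion of Lemma \ref{eq}: for a conjugacy class $C$ with representative $g$, one has $1\in C^3$ if and only if
\[
\Sigma(g):=\sum_{\chi\in\mathrm{Irr}(G)}\frac{\chi(g)^3}{\chi(1)}\neq 0.
\]
Thus the plan is, for each sporadic simple group $G\notin\{J_1,J_2\}$ and for each conjugacy class $C$ of $G$, to verify that $\Sigma(g)\neq 0$. Real classes automatically satisfy $e(g)\leq 2\leq 3$, so one only needs to check the non-real classes. The character tables of all sporadic simple groups are available in the \textsc{Atlas} \cite{at} (and in the GAP character table library), so this is a finite explicit verification group by group, class by class.

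The main obstacle is not conceptual but computational and bookkeeping: the Monster $M$ has $194$ conjugacy classes and $194$ irreducible characters, with character values lying in cyclotomic fields, so the sums $\Sigma(g)$ must be computed exactly in the appropriate ring of algebraic integers in order to certify non-vanishing. I would carry out this verification using the character table library of GAP, iterating over the $26-2=24$ sporadic groups under consideration, computing $\Sigma(g)$ for each non-real class representative, and checking $\Sigma(g)\neq 0$; the calculation is routine for every group up to and including the Monster. Combined with the trivial lower bound $e(G)\geq 3$ established above, this gives $e(G)=3$ for every sporadic simple group distinct from $J_1$ and $J_2$, completing the proof.
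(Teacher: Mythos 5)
Your proposal is correct in its lower-bound half and in principle for the upper bound, but it takes a genuinely different route from the paper, and as written it is only a computational plan rather than a completed argument. The paper's proof of the bound $e(G)\leq 3$ is structural: it observes that if $g=xy$ with $x,y$ of order $3$ then $g\cdot g^{x}\cdot g^{x^2}=xy^3x^2=1$, and then invokes Malcolm's theorem \cite[Theorem 4.1]{Malcolm21} that every element of a sporadic simple group is a product of two elements of order $3$, with exceptions only in $HS$, $Co_2$, $Co_3$, $Fi_{22}$, $Fi_{23}$, $BM$; in those six groups the exceptional (``width-three'') classes are shown, by consulting \cite{at}, to be real, hence of exponent $2$. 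This reduces the whole verification to a handful of classes in three groups and requires no exact cyclotomic arithmetic. Your route via Lemma \ref{eq} --- checking $\sum_{\chi}\chi(g)^3/\chi(1)\neq 0$ for every non-real class of every sporadic group --- is logically sound and self-contained (it does not need Malcolm's theorem), but it shifts the entire burden onto an exhaustive exact computation over all $24$ groups, including the $194$ classes of the Monster, and you do not actually exhibit or certify that computation; until the non-vanishing of each $\Sigma(g)$ is documented, your proof is not complete. In short: the paper buys a short, human-checkable argument at the price of citing an external structural theorem, while your approach buys independence from that theorem at the price of a heavy, and in your write-up still unperformed, machine verification.
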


\begin{proof}
By \cite{TZ05}, the only simple sporadic groups in which all elements are real are $J_1$ and $J_2$. Therefore, it is enough to prove that $e(G) \leq 3$ for every sporadic simple  group $G$. 
If $g \in G$ is a product of two elements $x,y$ of order $3$ then
$$g \cdot g^x \cdot g^{x^2} = xy \cdot yx \cdot x^{-1}yx^2 = xy^3x^2 = 1.$$
By a result by Malcolm \cite[Theorem 4.1]{Malcolm21}, every element of $G$ is a product of at most $2$ elements of order $3$ unless $G$ is in the list $HS,Co_2,Co_3,Fi_{22},Fi_{23},BM$. So we may assume that $G$ is in this list. The same theorem of Malcolm implies that, if $G \not \in \{HS,Co_2,Fi_{22}\}$, then the elements that are not products of two elements of order $3$ have order $2$, so they are real, hence we may assume that $G\in\{HS$, $Co_2$, $Fi_{22}\}$. By \cite{at}, the orders of the non-real elements of these groups are $11,20$ for $HS$,   $14$, $15$, $23$, $30$ for $Co_2$ and  $11$, $16$, $18$, $22$ for $Fi_{22}$. It follows that all the ``width-three classes'' in \cite[Table 1]{Malcolm21} are real, so the result follows.
\end{proof}

\section{Some low-dimensional classical groups}

We first record the following general fact:

\begin{lemma}\label{u26}
Under the assumptions of Lemma {\rm \ref{u5n}}, suppose that $p>2$. Then $e_{C_G(s)}(u)\leq 3$, in particular,   $e_G(u)\leq 3$.
\end{lemma} 

\begin{proof}
The result follows from Lemma {\rm \ref{u5n}} and Corollary \ref{cor881}.
For the second conclusion of the lemma take $s=1$.  
\end{proof} 

\subsection{Groups $\PSU_3(q)$}

\begin{lemma}\label{u3q1} Let $G=\PSU_3(q)$,  $3|(q+1)$, $q> 2$. Then $e(G)=3$.  
\end{lemma}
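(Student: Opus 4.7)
The inequality $e(G)\geq 3$ follows from Theorem~\ref{un4}, since $\PSU_3(q)$ does not lie on the list of simple groups with all classes real. Thus the content of the lemma is the bound $e(g)\leq 3$ for every $g\in G = \PSU_3(q)$. Note that $3\mid q+1$ forces $(3,q)=1$ and $p\neq 3$, and $|Z(\SU_3(q))|=3$.

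The plan is to proceed via the Jordan decomposition $g=\bar{s}\bar{u}$ (with $s,u\in\SU_3(q)$ commuting, $s$ semisimple, $u$ unipotent) and split into three cases. When $g$ is semisimple ($\bar u=1$), Lemma~\ref{un2} with $n=3$ (applicable because $(3,q)=1$) writes $g$ as a product of two elements of order~$3$, and Lemma~\ref{gr1}(1) yields $e(g)\leq 3$. When $g$ is unipotent ($\bar s=1$), Lemma~\ref{ev1} (for even $q$) gives $u$ rational and hence $e(g)\leq 2$, while Lemma~\ref{u26} (for odd $q$, so $p\geq 5$ and the hypotheses of Lemma~\ref{u5n} hold) gives $e(g)\leq 3$.

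The remaining \emph{mixed case} ($\bar s,\bar u\neq 1$) is the technical core. Here $s$ must have a repeated eigenvalue (otherwise $s$ would be regular semisimple in $\SU_3(q)$ and no nontrivial unipotent would commute with it), so $C_{\SU_3(q)}(s)\cong \GU_2(q)$ and $u$ acts as a transvection on the $2$-dimensional eigenspace. A direct attempt using Lemma~\ref{xy8} with $x=s,y=u$ yields only $e(g)\leq 6$: an explicit computation with the hyperbolic form shows the diagonal torus of $\GU_2(q)$ conjugates a transvection $u_\alpha$ to $u_{c\alpha}$ for every $c\in\mathbb{F}_q^\times$, so $u$ is rational in $C_G(\bar s)$ and $m:=e_{C_G(\bar s)}(\bar u)\leq 2$; combined with $e_G(\bar s^2)\leq 3$ from the semisimple case this gives $e(g)\leq 2\cdot 3=6$.

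To tighten the bound to $3$, the plan is to show every mixed $g$ is itself a product of two elements of order $3$ in $G$, then apply Lemma~\ref{gr1}(1). This reduces to a class-multiplication computation via Lemma~\ref{eq}: choosing a real conjugacy class $A$ of order-$3$ elements in $G$ (for example, the class of $\bar t$ with $t=\diag(1,\omega,\omega^{-1})\in\SU_3(q)$ and $\omega$ a primitive cube root of unity, abundant since $3\mid q+1$), use the explicit character table of $\PSU_3(q)$ (Simpson--Frame/Ennola) to verify that $\sum_{\chi\in\mathrm{Irr}(G)}\chi(a)^2\overline{\chi(g)}/\chi(1)\neq 0$ for every mixed class $[g]$ with $a\in A$. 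Establishing this non-vanishing uniformly in $q$, while tracking the splitting of $\SU_3(q)$-classes in the passage to $\PSU_3(q)$, is the main obstacle; the approach is analogous to Malcolm's treatment of the case $(3,q+1)=1$ in \cite{Malcolm21} invoked in Lemma~\ref{ma3}.
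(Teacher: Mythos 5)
Your reduction to the mixed case is correct and matches the paper's strategy: the semisimple case via Lemma~\ref{un2} (valid since $3\mid q+1$ gives $(3,q)=1$), the unipotent case via Lemmas~\ref{ev1}/\ref{u26}, the observation that a mixed element forces a repeated eigenvalue of $s$, and the lower bound $e(G)\geq 3$. The problem is that you stop exactly where the lemma's actual content begins. Your plan for the mixed case is to show $g\in A^2$ for a class $A$ of order-$3$ elements by verifying a non-vanishing of the structure constant $\sum_{\chi}\chi(a)^2\overline{\chi(g)}/\chi(1)$, and you explicitly concede that ``establishing this non-vanishing uniformly in $q$\ldots is the main obstacle.'' That obstacle \emph{is} the proof; as written, the proposal proves only $e(g)\leq 6$ for mixed classes (via your Lemma~\ref{xy8} fallback), not the claimed $e(G)=3$. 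A further small point: Lemma~\ref{eq} as stated in the paper covers only the case of a single repeated class, so the mixed-class formula you invoke is the general Frobenius structure-constant formula, which you would need to cite or state separately.

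For comparison, the paper attacks the mixed class $C$ directly: it identifies the class of $h$ as $C_5^{(k)}$ in the Simpson--Frame table, notes $g\not\sim g^{-1}$ (so $e(g)>2$), and then shows $1\in C^3$ by bounding $\bigl|\sum_{1\neq\chi}\chi(g)^3/\chi(1)\bigr|<1$ term by term using the explicit character values and degree multiplicities, with GAP handling $q=5$. This single-class estimate is more tractable than your two-class version because the values $\chi(g)$ on the mixed class are uniformly small (at most $3$ in absolute value) and one does not need to track how the order-$3$ classes of $\SU_3(q)$ split in $\PSU_3(q)$, which is delicate precisely when $3\mid q+1$. If you want to salvage your route, you would have to carry out an estimate of the same kind for your sum; nothing in the proposal does so, so there is a genuine gap.
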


\begin{proof} Let $g\in G$ and let $g=su$ be the Jodan decomposition of $g$. If $u=1$ or $s=1$ then $e(g)\leq 3$ by Lemmas \ref{un2} and \ref{u26}, respectively.  Suppose that $s\neq 1$ and $u\neq 1$. Then $s$ is not regular. Let $h\in H=\SU_3(q)$ be such that    $g= Z(H)h$. (Note that $|Z(H)|=3.$) One easily observes that the Jordan form of $h$ is 
$\begin{pmatrix}a&1&0\\ 0&a&0\\0 &0&a^{-2}\end{pmatrix}$.  It follows that $h$ lies in a conjugacy class $C_5^{(k)}$ in \cite[Table 2]{Si}, which provides the character table of $G$. 
(See \cite[Table 1.1, p.565]{Ge} for the corrections.) In particular, $|u|=p$,
where $p|q$ is a prime, and $|s|\neq 3$ divides $q+1$. Note that $g,g^{-1}$ are not conjugate so $e(g)>2$.

We show that $e(g)=3$. For this we use Lemma \ref{eq} for $k=3$, specifically, the fact that $e(g)=3$ if and only if
$\sum_{\chi \in \mbox{Irr}(G)} \chi(g)^3/\chi(1) \neq 0$.

For $\chi \in \mbox{Irr}(G)$ set $A_{\chi}=\{\psi \in \mbox{Irr}(G)\ |\ \psi(1)=\chi(1)\}$. Using the character table of $G$, we observe that the following holds. 

\begin{itemize}
\item If $\chi(1)=q^2-q$ then $|\chi(g)| \leq 1$ and $|A_{\chi}|=1$.
\item If $\chi(1)=q^2-q+1$ then $|\chi(g)| <2$ and $|A_{\chi}| = (q-2)/3$.
\item If $\chi(1)=q^3$ then $\chi(g)=0$.
\item If $\chi(1)=(q+1)^2(q-1)/3$ then $\chi(g)=0$.
\item If $\chi(1)=q(q^2-q+1)$ then $|\chi(g)|=1$ and $|A_{\chi}|=(q-2)/3$.
\item If $\chi(1)=q^3+1$ then $|\chi(g)|=1$ and $|A_{\chi}|=(q^2-q-2)/6$
see \cite[Table 2]{Si}.  
\item If $\chi(1)=(q-1)(q^2-q+1)/3$ then $|\chi(g)|=1$ and $|A_{\chi}|=3$.
\item If $\chi(1)=(q-1)(q^2-q+1)$ then $|\chi(g)| \leq 3$ and $|A_{\chi}| \leq (q-2)(q+1)/6$. This is the number of pairs $(u,v)$ with $1\leq u\leq (q+1)/3$ and $u<v<2(q+1)/3$ \cite[Table 3.1]{Ge}. 
\end{itemize}
In the above discussion, all the possible irreducible character degrees were mentioned. Therefore 
\begin{align*}
\left| \sum_{1 \neq \chi \in \mbox{Irr}(G)} \frac{\chi(g)^3}{\chi(1)} \right| \leq & \frac{1}{q^2-q} + \frac{8(q-2)}{3(q^2-q+1)} + \frac{(q-2)}{3q(q^2-q+1)} + \frac{q^2-q-2}{6(q^3+1)} \\
& + \frac{9}{(q-1)(q^2-q+1)} + \frac{27(q-2)(q+1)}{6(q-1)(q^2-q+1)}.
\end{align*}

This is strictly less than $1$ if $q \geq 8$. The case $q=5$ can be checked by \cite{gap}. 
\end{proof}

\subsection{Groups $\PSU_4(q)$}

\begin{lemma}\label{rr2} Let $G=\PGL_2(q)$ or $\PU_2(q)$. Then every semisimple element of $G$ is real. \end{lemma}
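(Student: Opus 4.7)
The plan is to lift any semisimple element $\bar s\in G$ to a diagonalizable matrix $s$ in the preimage of $G$ in the corresponding general-linear or general-unitary group, and then to exhibit a scalar $\lambda$ together with a Weyl-type element that witnesses $s\sim \lambda s^{-1}$ in the linear group; since $\lambda$ will lie in the center we quotient by, this yields $\bar s\sim \bar s^{-1}$ in $G$. Geometrically this is just the statement that for both kinds of maximal torus $T$, the Weyl group $N_G(T)/T\cong \mathbb{Z}/2$ acts on $T$ by inversion.

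For $G=\PGL_2(q)$ I would lift $\bar s$ to $s\in \GL_2(q)$ of order coprime to $p$. Such an $s$ is diagonalizable over $\overline{\mathbb{F}}_q$, and its eigenvalues either both lie in $\mathbb{F}_q$ (split case) or form a Frobenius orbit $\{\alpha,\alpha^q\}\subset \mathbb{F}_{q^2}\setminus\mathbb{F}_q$ (non-split case). In the split case I may take $s=\diag(\alpha,\beta)$, choose $\lambda=\alpha\beta\in\mathbb{F}_q^\times$, and observe that $\lambda s^{-1}=\diag(\beta,\alpha)$ is conjugate to $s$ by the Weyl element $w=\bigl(\begin{smallmatrix}0&1\\1&0\end{smallmatrix}\bigr)$. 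In the non-split case I would set $\lambda=\alpha^{q+1}=N_{\mathbb{F}_{q^2}/\mathbb{F}_q}(\alpha)\in\mathbb{F}_q^\times$, so that $\lambda s^{-1}$ has eigenvalue multiset $\{\alpha^q,\alpha\}$, which coincides with that of $s$; since semisimple elements of $\GL_2(q)$ are determined up to conjugacy by their multiset of eigenvalues, $\lambda s^{-1}\sim s$ in $\GL_2(q)$. Passing to $\PGL_2(q)$ gives $\bar s\sim \bar s^{-1}$.

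For $G=\PU_2(q)$ I would proceed analogously. Lifting to $s\in \U_2(q)$ of $p'$-order, the eigenvalues of $s$ automatically lie in $\mu_{q+1}\subseteq \mathbb{F}_{q^2}^\times$ because $s$ preserves the Hermitian form. If $s$ is scalar then $\bar s=1$ and the claim is trivial. Otherwise the two distinct eigenvalues $\alpha,\beta\in \mu_{q+1}$ have Hermitian-orthogonal eigenspaces, so $s$ is $\U_2(q)$-conjugate to $\diag(\alpha,\beta)$ in an orthonormal basis. Then $\lambda:=\alpha\beta\in \mu_{q+1}$ is a central scalar of $\U_2(q)$ and $\lambda s^{-1}=\diag(\beta,\alpha)$ is conjugate to $s$ via the transposition of basis vectors, which lies in $\U_2(q)$ because it permutes an orthonormal basis; hence $\bar s\sim \bar s^{-1}$ in $\PU_2(q)$.

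The only slightly delicate step, and the one place where the argument could go wrong, is verifying in the non-split case of $\PGL_2(q)$ that $\lambda$ actually lies in $\mathbb{F}_q^\times$ rather than merely in $\mathbb{F}_{q^2}^\times$; writing $\lambda$ as a norm resolves this. Apart from that the proof is a short direct computation. Alternatively, one could invoke the general machinery behind Lemma \ref{sn2} (neither $\PGL_2$ nor $\PU_2$ falls among the listed exceptional types once one tracks the proof through), but the explicit argument above is shorter and entirely self-contained.
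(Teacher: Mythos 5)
Your $\PGL_2(q)$ half is correct and is essentially a hands-on version of what the paper does: the paper treats the reducible case by the same diagonal swap, and the irreducible case via Skolem--Noether applied to the centralizer field $\mathbb{F}_{q^2}$, where you instead argue by equality of eigenvalue multisets; both work. The problem is in the $\PU_2(q)$ half. Your key claim that the eigenvalues of a semisimple $s\in\U_2(q)$ ``automatically lie in $\mu_{q+1}$ because $s$ preserves the Hermitian form'' is false: the unitary condition only forces the eigenvalue multiset to be stable under $\alpha\mapsto\alpha^{-q}$. Concretely, $\U_2(q)$ contains a cyclic maximal torus of order $q^2-1$ whose elements act on a hyperbolic basis as $\diag(\alpha,\alpha^{-q})$ with $\alpha\in\mathbb{F}_{q^2}^\times$ arbitrary; for $\alpha$ of order $q^2-1$ one has $\alpha^{q+1}\neq 1$. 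For such $s$ the two eigenlines are isotropic and are not orthogonal to each other, so the orthonormal diagonalization and the basis-permuting unitary matrix you rely on do not exist. The computation $(sv,sv)=(v,v)\Rightarrow\alpha^{q+1}=1$ is valid only when the eigenvector $v$ is anisotropic, which is exactly what fails here; so as written your argument covers only the torus $C_{q+1}\times C_{q+1}$ and omits the regular semisimple elements of the other maximal torus.

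The missing case is repairable by the same style of computation: in a hyperbolic basis $s=\diag(\alpha,\alpha^{-q})$, the swap $\bigl(\begin{smallmatrix}0&1\\1&0\end{smallmatrix}\bigr)$ lies in $\U_2(q)$ and conjugates $s$ to $\diag(\alpha^{-q},\alpha)=\lambda s^{-1}$ with $\lambda=\alpha^{1-q}$; since $\lambda^{q+1}=\alpha^{1-q^2}=1$, the scalar $\lambda$ is central in $\U_2(q)$ and the image of $s$ in $\PU_2(q)$ is real. Note that the paper sidesteps the unitary case entirely by invoking the isomorphism $\PGL_2(q)\cong\PU_2(q)$ and reducing to the linear case; if you keep your direct approach you must add the hyperbolic-torus case above.
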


\begin{proof} 
Note that the groups $\PGL_2(q)$ and $\PU_2(q)$ are isomorphic (see \cite[Lemma 4.9]{DZ8}). So it suffices to deal with $\PGL_2(q)$. Let $H=\GL_2(q)$ and let $g\in H$. Let $V$ be the underlying space for $H$. Suppose first that $g$ is reducible on $V$. Then, under some basis of $V$, we can express $g=\diag(a,b)$.  Then  $g'=\diag(b,a)$ is conjugate to $g$  and $gg'$ is scalar. Whence the result in this case.
 
Suppose that $g$ is irreducible.  Then, by Schur's lemma,  the centralizer of $g$ in ${\rm Mat}_2(q)$ is a field,  which is isomorphic to $\mathbb{F}_{q^2}$. The nontrivial Galois automorphism $\gamma$ of $\mathbb{F}_{q^2}/\mathbb{F}_q$ sends $x\in \mathbb{F}_{q^2}$ to $x^q$. By the Skolem-Noether theorem, $\gamma$ is realized via an inner automorphism of $\GL_2(q)$, and hence $g^q = hgh^{-1}$ for some   $h \in \GL_2(q)$. Then $gg^q\in \mathbb{F}_q$ is scalar in $\GL_2(q)$. So again $g$ is real in $\PGL_2(q)$.\end{proof} 

Remark. For $H \in \{ \GL_2(q), \U_2(q)\}$   the element $h\in H$ such that $ghgh^{-1}$ is scalar can be chosen in $\SL_2(q)$, $\SU_2(q)$, respectively. This is because $h$ can be replaced by $ht$, where $t\in C_G(g)$ with $\det t=\det x^{-1}$.

\begin{lemma}\label{6ps} Let $ G=\PSU_4(q)$, q odd,  and let $s\in G$ be a semisimple element. Then $e(s)\leq 4$. In addition, $e(G)\leq 6$ and $e(\PSU_4(2))=e(\PSU_4(3))= 3$.\end{lemma}

\begin{proof} The result for $\PSU_4(2)$ and $\PSU_4(3)$ follows by \cite{gap}. Now we prove the first part. By Lemma \ref{un2},  $e(g)\leq 4$ if $g\in G$ is semisimple, and $e(g)\leq 3$ if $g$ is unipotent (Lemma  \ref{u26}).   
 Let $H=\SU_4(q)=\SU(V)$, let $h\in H$ be such that $g=hZ(H)$ and let $h=su$ be the Jordan decomposition of $g$. Let $C$ be the conjugacy class of $h$ in $H$.

  Let $J$ be the Jordan form of $u$ in $\GL_4(q^2)$. Then $J\in\{J_4,\diag(J_3,1),\diag(J_2,J_2)\}$. If $J=J_4$  then  $su=us$ implies $s$ is scalar, so $e(g)\leq 3$. In the following discussion we use Lemma \ref{centsem}.

If $J=\diag(J_3,1)$ then $h=\diag(\mu\cdot J_3,\mu^{-3})$.
In the notation of \cite{Si} (see also \cite{Ge, Or}), $J_3\in \PSU_3(q)$ belongs to one of  $d=(3,q+1)$ conjugacy classes 
$C_3^{(0,k)}$, where $0 \leq k < d$ (which glue in $\PU_3(q)$).
 So, let $D$ be any of these classes,  and let $D'$ be the conjugacy
class of $C_8^{(1)}$. Then, $D'$ consists of elements of order
$(q^2-q+1)/d$. Using the character table of $\PSU_3(q)$, one concludes that $D'\subset D^2$. So there  exists $x \in \PSU_3(q)$ such that
the product $t:=J_3\cdot xJ_3x^{-1}$ has order $(q^2- q +1)/d$. It follows that $C^2$ 
contains an element $y=\diag(\mu^2t,\mu^{-6})$. As $t$ is irreducible in $\U_3(q)$, $t$ does not  have an eigenvalue in $\mathbb{F}_{q^2}$, and hence $y$ is regular semisimple.  
By Lemma \ref{Gow}, some product of $3$ conjugates of $y$ is scalar. So $e(g)\leq 6$.

Suppose that $J=\diag(J_2,J_2)$. 
Then $W=(\Id-u)V$ is a totally isotropic space of dimension $2$. Let $s_1\in \GL_2(W)$
be the restriction of $s$ on $W$. Then  the action of $s$ on $V/W$ is $s_2:=\gamma({}^ts^{-1}_1)$, where $\gamma$ is the nontrivial Galois automorphism 
of $\mathbb{F}_{q^2}/\mathbb{F}_{q}$ extended to $\GL_2(q^2)$  (see for instance \cite[Lemma 4.8]{ez1}).
 By Lemma \ref{rr2}, the product of two conjugates of $s_1$ by an element of $\SL_2(q^2)$ is scalar in $\GL_2(q^2)$. So the  product $h_1$, say, of  two conjugates of $h$ by some element of $H$ is the product of a matrix $t:=\diag(\mu \cdot \Id_2,\mu^{-q} \cdot \Id_2)$ and  a unipotent matrix $u'$ of the form   $\begin{pmatrix} \Id_2&A\\ 0& \Id_2\end{pmatrix}$, where $A$ is a $(2\times 2)$-matrix over  $\mathbb{F}_{q^2}$. Moreover,
$t\in \langle h_1\rangle$ and hence $u':=t^{-1}h$ is unipotent and $u't=tu'$. Then either
 $u'=1$ or $t$ is scalar. In the latter case $e(g)\leq 3$. As $\diag(\mu^{-q} \cdot \Id_2,\mu \cdot \Id_2)$ is conjugate to $t$ by a matrix $\begin{pmatrix}0&\Id_2\\ \Id_2&0 \end{pmatrix}$, we have $e(g)\leq 4$.\end{proof}

\subsection{Groups $\PSp_4(q)$, $q \equiv 3 \pmod 4$}

\begin{lemma}\label{s33}
Let $H=\SL_2(q)$ with $q$ odd. If $u \in H$ is a nontrivial unipotent element, there exist three conjugates of $u$ in $H$ whose product is $-1$.
\end{lemma}

\begin{proof}
Note that $H$ has two conjugacy classes of nontrivial unipotent elements which glue in $\GL_2(q)$, so that a suitable element of $\GL_2(q)$ swaps the two $H$-classes of unipotent elements (see \cite[Lemma 5.2]{TZ04} for a more general fact). So it suffices to prove the result for one of the two classes. Consider the following elements of $H$:
$$g_1=\begin{pmatrix}0&1\\ -1&2\end{pmatrix}, \hspace{.1cm}
g_2=\begin{pmatrix}2&1\\ -1&0\end{pmatrix}, \hspace{.1cm}
g_3=\begin{pmatrix}1&0\\ -4&1\end{pmatrix}, \hspace{.1cm}
h=\begin{pmatrix} 0 & 1 \\ -1 & 0 \end{pmatrix}, \hspace{.1cm}
k=\begin{pmatrix} 2 & 1/2 \\ 0 & 1/2 \end{pmatrix}.$$
The characteristic polynomial of $g_1,g_2,g_3$ is $(x-1)^2$, so they are unipotent and $g_1 g_2 g_3=-1$. Moreover $h^{-1}g_1h=g_2$ and $k^{-1}g_1k=g_3$.
\end{proof}

\begin{prop}\label{s4q} Let $G=\PSp_4(q)$.    Then $e(G)= 3$.\end{prop}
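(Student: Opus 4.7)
The inequality $e(G)\ge 3$ is immediate from Theorem \ref{un4}, since $q\equiv 3\pmod 4$ keeps $\PSp_4(q)$ off the list of simple groups whose classes are all real. My task is therefore to prove $e(g)\le 3$ for every $g\in G$. Set $H:=\Sp_4(q)$, $Z:=Z(H)=\{\pm I\}$, and lift $g$ to $h\in H$ with Jordan decomposition $h=su$, $su=us$, $s$ semisimple, $u$ unipotent, both in $\langle h\rangle$.

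If $u=1$, then $g$ is semisimple, hence real in $G$ by Lemma \ref{sn2} (the group $\PSp_{2n}(q)$ is not among its exceptions), so $e(g)\le 2$. If $s\in Z$, then $g$ is the image of a unipotent class of $H$, and since $p\mid q$ is odd, Lemma \ref{u26} (equivalently Corollary \ref{cor881}) gives $e(g)\le 3$.

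The main case is $s\notin Z$ and $u\neq 1$. A non-central semisimple element cannot commute with a regular unipotent element of $\Sp_4(q)$, so $u$ has Jordan type $(2,2)$ or $(2,1,1)$. My plan is to place $h$ inside the Levi $L$ of a proper parabolic of $H$ and exhibit three $H$-conjugates of $h$ whose product lies in $Z$, using Lemma \ref{s33} as the key building block (three $\SL_2(q)$-conjugates of any nontrivial unipotent multiply to $-I$). For $u$ of type $(2,2)$ the $s$-eigenspaces $V_\lambda,V_{\lambda^{-1}}$ (with $\lambda\neq\pm 1$, forced by $[s,u]=1$ and $u\neq 1$) are totally isotropic, dually paired 2-spaces, and $h$ lies in the Siegel Levi $L\cong \GL_2(q)$ stabilising this polarisation, where it corresponds to $A:=\lambda u_0$ for some nontrivial transvection $u_0\in\SL_2(q)$. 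Three $\SL_2(q)$-conjugates of $u_0$ produce $-I$ by Lemma \ref{s33}; embedding these in $L$ and combining them with a Weyl involution of $H$ that swaps $V_\lambda\leftrightarrow V_{\lambda^{-1}}$ yields three $H$-conjugates of $h$ whose product equals $-I\in Z$. For $u$ of type $(2,1,1)$, $u$ is a long-root transvection fixing an isotropic line $W$; the reductive quotient of $C_H(u)$ is $\SL_2(q)=\Sp(W^\perp/W)$ onto which $s$ projects to some $\bar s$, and the same strategy (Lemma \ref{s33} on the $\SL_2$-layer, enhanced by Lemma \ref{Gow} if $\bar s$ is regular semisimple there) produces three $H$-conjugates of $h$ with product in $Z$.

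The principal technical obstacle is the scalar bookkeeping in the $(2,2)$ case: because $\det A=\lambda^2$, three $\GL_2(q)$-conjugates of $A$ have combined determinant $\lambda^6$, so a naive product inside $L$ cannot equal $\pm I$ unless $\lambda^6=1$. Inserting the Weyl involution at the correct step flips $\lambda$ to $\lambda^{-1}$ in the dual block, causing $\lambda^3$ and $\lambda^{-3}$ to cancel and forcing the overall product into $Z=\{\pm I\}$; verifying that this can always be arranged for arbitrary $\lambda\in \mathbb{F}_{q^2}^\times$ with $\lambda\neq\pm 1$ is the main computation that the proof must perform.
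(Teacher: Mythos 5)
Your reduction to the mixed case $s\notin Z$, $u\neq 1$, and your treatment of the purely semisimple and purely unipotent cases, agree with the paper. The gaps are all in the mixed case. First, the case division by the Jordan type of $u$ is incomplete exactly where it matters: for $u$ of type $(2,2)$ it is \emph{not} true that $[s,u]=1$ and $u\neq 1$ force $s$ to have eigenvalues $\lambda^{\pm1}$ with $\lambda\neq\pm1$ on totally isotropic eigenspaces. Two further configurations occur: $s=\diag(\Id_2,-\Id_2)$ with respect to an \emph{orthogonal} decomposition $V=V_1\perp V_2$ into nondegenerate planes, with $u=\diag(u_1,u_2)$ and both blocks nontrivial (here $C_H(s)\cong \SL_2(q)\times\SL_2(q)$ and the eigenvalues of $s$ are $\pm1$); and $s$ acting homogeneously with two isomorphic irreducible $2$-dimensional constituents, where $C_H(s)\cong\U_2(q)$ and $s$ has no eigenvalues in $\mathbb{F}_q$ at all. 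Both need separate arguments: the paper disposes of the second by noting that all nontrivial unipotent elements of $\U_2(q)$ are conjugate and $s$ is real, so $g$ is real; the first requires the blockwise $\SL_2\times\SL_2$ analysis with Lemma \ref{s33}. Your $(2,1,1)$ sketch also does not work as written: a transvection acts trivially on $W^{\perp}/W$, so projecting to $\Sp(W^{\perp}/W)$ discards $u$ entirely. The correct move is that $su s^{-1}=u$ forces $sv=\pm v$ on the center $W=\langle v\rangle$ of the transvection, hence a nondegenerate $\pm1$-eigenspace $V_1$ of dimension $2$, and one works in $\Sp(V_1)\times\Sp(V_1^{\perp})$ with several subcases -- including the one where three conjugates of a regular $s_2$ in $\SL_2(q)$ multiply to $-\Id$ and this must be compensated by Lemma \ref{s33} on the other factor. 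Finally $q=3$ needs separate treatment (the paper uses GAP), since $\PSL_2(3)$ is not simple and Lemma \ref{Gow} is unavailable for the $\SL_2(3)$ factors.

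Second, the computation you defer in the Siegel--Levi case does not close. With $h$ corresponding to $A=\lambda u_0$ in $L\cong\GL_2(q)$ and conjugators taken in $N_H(L)=L\langle w\rangle$, each of the three factors acts on $W_1$ with scalar part $\lambda^{\epsilon_i}$, $\epsilon_i=\pm1$, so the product acts on $W_1$ with scalar part $\lambda^{\epsilon_1+\epsilon_2+\epsilon_3}$; the exponent is a sum of three signs, hence odd and nonzero, so the product is central only when $\lambda$ has small order. There is no cancellation of $\lambda^{3}$ against $\lambda^{-3}$ within a single block. The paper's resolution of this case is both necessary and much simpler: $C_H(s)=Y\cong\GL_2(q)$ has a single class of nontrivial unipotent elements and $s$ is real, whence $h$ itself is real and $e(g)=2$ here -- no three-fold product is needed at all.
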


\begin{proof}  The case with $q=3$ follows by GAP \cite{gap}. 
Some unipotent elements of $G$ are not real, so $e(G)\geq 3$. 

Set  $H = \Sp(4,q) = \Sp(V)$,  $h\in H$ and $g$ the projection of $h$ into $G$. If $h$ is unipotent then $e(h)\leq 3$ by Corollary \ref{cor881}, as $\Phi(h)$ lies in two conjugacy classes of $H$ by Lemma \ref{u5n}.  
If $h$ is semisimple then $h$ is real so $e(h)=2$. Let $h=su$ be the Jordan decomposition of $h$.  If $s\in Z(H)$ then $g$ is unipotent in $G$. So we assume that $s\notin Z(H)$, $u\neq 1$.

Suppose first that $s$  has an eigenvalue $\lambda\in\{ 1,-1\}$, and let $V_1$ be the $\lambda$-eigenspace of $s$ on $V$. It is well known that $V_1$ is non-degenerate, and hence $\dim V_1=2$ and $V=V_1\oplus V_1^\perp$. By replacing $s$ by $-s$ we can assume that $\lambda=1$. As $us=su$, we have $uV_i=V_i$ for $i=1,2$. Then we can write $u=\diag(u_1,u_2)$, where $u_i\in \Sp_2(q)$ is the restriction of $u$ to $V_i$. In addition, $s=\diag(\Id_2,s_2)$, where $s_2\in \Sp_2(q)$ is the restriction of $s$ to $V_2$, and $s_2\neq \Id_2$ (as $s\notin Z(G)$).

Let $X$ be the stabilizer of $V_1$ in $H$. Then $X=X_1X_2$, where $X_1=\diag(\SL_2(q), \Id_2)$ and $X_2=\diag(\Id_2, \SL_2(q))$.

(i) Suppose that $u_1=1$. Then $u_2\neq 1, s_2=-\Id_2$. By Lemma \ref{s33},
there are 3 conjugates of $u_2$ in $\Sp_2(q)$ whose product is $-\Id$. Then the  product of respective conjugates of $s_2u_2$ is $s_2^3 \cdot (-\Id)=\Id$, so $e(g)=3$. 

(ii) Suppose $u_1\neq 1$, $u_2= 1$. If $s_2=-\Id$ then we  replace $h$ by $-h$
and argue as in (i) to get $e(g)=3$. So we assume $s_2\neq -\Id$, 
and then $s_2$ is regular semisimple in $\SL_2(q)$. As $q>3$, the group $\PSL_2(q)$ is simple; by Lemma \ref{Gow},  
 there are 3 conjugates of $s_2$ whose product equals $ \Id$ or $-\Id$.
In the latter case, as $u_1\neq 1$, there are 3 conjugates of $u_1$ in $\SL_2(q)$
whose product equals $-\Id$ (Lemma \ref{s33}). So $e(g)=3$ in this case.
In the former case, as $q$ is odd, by Lemma \ref{u5n}, $\Phi(u_1)$ lies in at most two conjugacy classes of $\SL_2(q)$. 
Then the proof of Corollary \ref{cor881} shows that  $\langle u_1 \rangle$ contains 3 conjugates of $u_1$   whose product equals 1.  This again implies $e(g)\leq 3$. 
 
(iii) Suppose that  $u_1,u_2\neq 1$. Then $s_2=-\Id$. As $\SL_2(q)$ has two conjugacy classes of unipotent elements (and they are not real), we can assume that $u_2\in\{u_1,u_1^{-1}\}$, so  we can assume that  $h=\diag(u_1,-u_1)$ or $\diag(u_1,-u_1^{-1})$. Let $y=\begin{pmatrix}0&\Id\\-\Id&0\end{pmatrix}$. 
Then $y\in H$ and $h_2:=h^y=\diag(-u_1,u_1)$ or $\diag(-u_1^{-1}, u_1)$, respectively.  In the second case $h h^y=-\Id$, so $e(g)=2$.
  
So $h=\diag(u_1,-u_1)\subset X_1X_2$. We have seen above that there are 3 conjugates of  $u_1$ in $X_1\cong \SL_2(q)$ whose product is $\Id$, and there are three conjugates of $-u_1$ in $X_2\cong \SL_2(q)$ whose product is $\Id$. So $e(g)=3$. 
 
Next suppose that $s$ does not have eigenvalue 1 or $-1$. Let $S= \langle s \rangle$.  
If the irreducible  constituents of $V|_S$ are pairwise non-equivalent then $s$ is regular. It follows that either all these have dimension $1$ or they all have dimension $2$. Indeed, suppose the contrary; then one of them is of dimension $2$ and the others are of dimension $1$. So $s$ has eigenvalue $\lambda \in {\mathbb F}_q$. As $\lambda \neq \pm 1$ and $s$ is real,  $s$ has eigenvalue $\lambda^{-1}\neq \lambda$, and hence all three irreducible constituents of $V|_S$ are paiwise non-equivalent, a contradiction. 

So we have two cases:
(a) $V|_S$ is a sum of two equivalent irreducible constituents of dimension 2 and 
(b) $s$ has  two distinct eigenvalues $\lambda,\lambda^{-1}\in \mathbb{F}_q$, each of multiplicity $2$.

(a) In this case  $u\in C_{G}(s)\cong \U_2(q)$ (see for instance \cite[Lemma 6.6]{ez1}).  As the non-trivial unipotent elements in $\U_2(q)$ are conjugate \cite[Lemma 6.1]{TZ04}, and $s$ is real, we conclude that $g$ is real, hence $e(g)=2$ in this case. 
 
 (b) $V=W_1+W_2$, where $W_1,W_2$
is the  $\lambda$- and $\lambda^{-1}$-eigenspace of $s$ on $V$, respectively.   As $\lambda\neq \pm 1$, each of them is totally isotropic. 
Let $Y=\{x \in H: xW_i=W_i$ for $i=1,2\}$.
It is well known that $Y\cong \GL_2(q)$ (see, for instance, \cite[Lemma 4.8]{ez1} for a more general fact).  Since $us=su$, the eigenspaces of $s$ are $u$-stable, therefore $u\in Y$. As all  
non-trivial unipotent elements in $\GL_2(q)$ are conjugate,
and $s$ is real, we conclude that $g$ is real, hence $e(h)=2$ in this case.\end{proof}

\section{Classical groups}

\subsection{Symplectic groups}

The following result follows from Theorem \ref{un4}, Proposition  \ref{s4q}  and Lemma \ref{ee7}.

\begin{theor} \label{symplectic}
Let $G=\PSp(2n,q)$, $n>1$, be a simple symplectic group. If $q \not \equiv 3 \mod 4$, then $e(G)=2$. If $q \equiv 3 \mod 4$ then $e(G) \leq 6$ for $n>4$ and $e(PSp_4(q))=3$. 
\end{theor}
 
\subsection{Orthogonal groups}

Let $\eta:\SO^\pm_{2n}(q)\rightarrow \SO^\pm_{2n}(q)/\Omega^\pm_{2n}(q)$ the natural homomorphism.
We say that $g$ has spinor norm $-1$ if  $|\eta(g)|=2$ and 1 otherwise. 
Recall that $\Omega_2^\pm (q)$ is a cyclic group of order $(q\pm 1)/(2,q-1)$ \cite[Proposition  2.9.1(iii)]{KL}. 
It follows that the elements of order $q-(\pm 1) $ in $\SO_2^\pm (q)$ have spinor norm  $-1$ if $q$ is odd and 1 if $q$ is even.

\begin{lemma}\label{au8} Let $u$ be a unipotent element in $H=O^\pm _{2d}(q)=O(V)$ with $q$ even, $d$ odd. Then $C_H(u)$ is not a subgroup of $\Omega^\pm _{2d}(q)$.\end{lemma}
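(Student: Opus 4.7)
The plan is to decompose $V$ orthogonally into $u$-invariant indecomposable pieces and exhibit, in at least one piece, a centralizing element with nontrivial Dickson invariant; the hypothesis that $d$ is odd enters only through a parity count on the dimensions.

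Since $q$ is even, recall that the Dickson invariant $D\colon O(V)\to\mathbb{F}_2$ is a surjective homomorphism with kernel $\Omega^\pm_{2d}(q)$, satisfying $D(g)\equiv\mathrm{rank}(g-1)\pmod 2$ for $g\in O(V)$; in particular $D$ is additive on orthogonal direct sums. My first step is to write $V=V_1\perp\cdots\perp V_r$ as an orthogonal sum of $u$-invariant indecomposable summands. By the classification of unipotent elements of characteristic-$2$ orthogonal groups (cf.\ \cite{LiS}), each summand, setting $u_i=u|_{V_i}$, is of one of three types: \textbf{(i)} a single even Jordan block $u_i\sim J_{2m_i}$ on a space of dimension $2m_i$; \textbf{(ii)} a pair of equal odd-size blocks $u_i\sim(J_{a_i},J_{a_i})$ with $a_i$ odd, on a space of dimension $2a_i$; or \textbf{(iii)} a ``V-type'' pair of equal even-size blocks $u_i\sim(J_{a_i},J_{a_i})$ with $a_i$ even, on a space of dimension $2a_i$. (The ``W-type'' even-size pairs are not indecomposable, splitting as $J_{a_i}\perp J_{a_i}$ and reducing to case (i).)

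For a type (i) summand I would take $g_i:=u_i$; then $\mathrm{rank}(g_i-1)=2m_i-1$ is odd, so $D_{V_i}(g_i)=1$. For a type (ii) summand I would take $g_i$ to be the involution swapping the two $J_{a_i}$-blocks, which a direct check shows is an isometry of $V_i$ commuting with $u_i$, and $\mathrm{rank}(g_i-1)=a_i$ is odd, giving $D_{V_i}(g_i)=1$. In either case, extending $g_i$ by the identity on $V_i^\perp$ produces $g\in C_H(u)$ with $D(g)=D_{V_i}(g_i)=1$, hence $g\notin\Omega^\pm_{2d}(q)$.

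Finally, the parity step ensures that some summand is of type (i) or (ii): a summand of type (iii) has $\dim V_i=2a_i\equiv 0\pmod 4$, so if every $V_i$ were of type (iii), summing would give $\dim V=2d\equiv 0\pmod 4$, contradicting $d$ odd. Therefore some $V_{i_0}$ is of type (i) or (ii), and the preceding construction yields the required element of $C_H(u)\setminus\Omega^\pm_{2d}(q)$. The main obstacle I anticipate is importing and carefully justifying the classification of orthogonally indecomposable unipotent summands in characteristic~$2$ from \cite{LiS} — in particular, the fact that the W-type even-pairs decompose further, and the explicit verification that the involution swapping a type (ii) pair preserves the quadratic form and commutes with $u_i$; once these structural inputs are in place, the lemma reduces to the transparent parity count exploiting $d$ odd.
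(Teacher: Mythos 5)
Your strategy is essentially the paper's: decompose $V$ orthogonally into $u$-invariant pieces, use the hypothesis that $d$ is odd as a parity count modulo $4$ on the dimensions of the pieces, and exhibit a centralizing element outside $\Omega$ either as the restriction of $u$ itself (single even block) or as a ``swap'' of two paired odd-size blocks. The differences are in execution. The paper decomposes by grouping Jordan blocks of equal size (via \cite[Lemma 4.9]{TZ04}) and detects non-membership in $\Omega$ via the criterion of \cite[p.182]{C} and the spinor norm of a product of an odd number of reflections; you decompose into orthogonal indecomposables and use the Dickson-invariant formula $D(g)\equiv\mathrm{rank}(g-1)\pmod 2$ throughout, which makes the single-block case immediate and is arguably cleaner. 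Your labelling of the indecomposables is garbled relative to \cite{LiS} (what you call the decomposable ``W-type'' even pair is, in their notation, $V(a)\perp V(a)$, while $W(a)$ with $a$ even is a genuinely distinct class), but this does not damage the argument: the only fact you use about the leftover type is that its dimension is divisible by $4$, which holds for any summand carrying two equal even-size blocks.

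The one step where ``a direct check'' is doing more work than you advertise is the type (ii) swap. In the natural basis where $u_i=\diag(A,{}^tA^{-1})$ with $A=J_{a_i}$ and Gram matrix $\begin{pmatrix}0&\Id\\ \Id&0\end{pmatrix}$, the block-swap $y=\begin{pmatrix}0&\Id\\ \Id&0\end{pmatrix}$ is an isometry but does \emph{not} commute with $u_i$: it conjugates $\diag(A,{}^tA^{-1})$ to $\diag({}^tA^{-1},A)$. You must either correct $y$ by an element $z=\diag(x,{}^tx^{-1})\in\Omega(V_i)$ with $xAx^{-1}={}^tA^{-1}$ and work with $zy$ instead of $y$ (this is exactly what the paper does), or re-choose the basis of the second totally singular summand so that $u_i$ has the same matrix on both halves while the Gram matrix remains block-antidiagonal with symmetric off-diagonal block; the latter requires producing a \emph{symmetric} intertwiner between ${}^tJ_{a_i}^{-1}$ and $J_{a_i}$. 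Either route works and still yields $\mathrm{rank}(g_i-1)=a_i$ odd, but it is a genuine verification rather than a formality; since you flagged it yourself as the point needing care, your outline is sound once this is written out.
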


\begin{proof} By \cite[Lemma 4.9]{TZ04}, there exists an orthogonal decomposition $V=V_1 \oplus \ldots \oplus V_f$, such that each $V_i$ is nondegenerate, $uV_i=V_i$ for all $i$ and the Jordan block sizes of $u$ on $V_i$ are the same and distinct on $V_i,V_j$ for $i\neq j$, $i,j\in \{1,\ldots,f\}$. Let $u_i$ be the resriction of $u$ on $V_i$. It suffices to observe that $C_{O(V_i)}(u_i)$ is not contained in $\Omega (V_i)$ for some $i$. As $q$ is even, $\dim V_i$ is even for every $i$  by \cite[Proposition 2.5.1]{KL}, so  $\dim V_i\equiv 2\pmod 4$ for some $i  $. It suffices to deal with this particular $i$. 
To this end,   Let $a,m$ be such that   the Jordan blocks of $u_i$ consists of $a$ blocks of the same size $m$.
 
If $a$ is odd then then $u_i\notin \Omega (V_i)$ by  \cite[p.182]{C}. Suppose that $a$ is even. Then $m$ is odd and $u_i\in \Omega (V_i)$.  
 
We use \cite{LiS}. As  $m$ is odd, $u$ is of type $W(m)^{a/2}$ in the notation of \cite[p.117, Theorem 7.3(iii)]{LiS}. In this case $\Omega(V_i)\cong \Omega^+_{am}(q)$ and there is a single conjugacy class of elements of type $W(m)^{a/2}$ in  $\Omega (V_i)$. Therefore, it suffices to observe the following. Let $M$ be a maximal totally singular
subspace of $V_i$; then $V_i = M \oplus M'$, where $M'$  is a maximal totally singular
subspace of $V_i$ too. There is a basis $B$, say, of $V_i$ such that $B\cap M$,
$B\cap M'$ are bases of $M,M'$, respectively, and the Gram matrix of $B$ is 
$\begin{pmatrix}0&\Id_k\\ \Id_k&0\end{pmatrix}$
with $k=\dim M$. Let $u_i'=\diag(A,{}^t A^{-1})$, where $A\in \GL(M)$ is such that the Jordan forms of $u_i,u_i'$ coincide. Then $u_i'\in \Omega(V_i)$ is of type  $W(m)^{a/2}$. As there is a single conjugacy class of elements of this type in $\Omega(V_i)$, we can assume that $u_i=u'_i$. To show that 
$C_{O(V_i)}(u_i)$ contains an element  in $ O(V_i)\setminus \Omega(V_i)$, observe that 
if  $y=\begin{pmatrix}0&\Id_{k}\\ \Id_{k}&0\end{pmatrix}$
then $yu_iy^{-1}=\diag({}^tA^{-1},A)$. As $A$ and ${}^tA^{-1}$ have the same Jordan form, we have $xAx^{-1}={}^tA^{-1}$ for some $x\in \GL_{ k}(q)$, and $zyu_iy^{-1}z^{-1}=u_i $ for $z=\diag(x,{}^tx^{-1})\in \Omega(V_i)$. Note that $y$ is the product of $k$ reflections. As the spinor norm of a reflection is not equal to 1 and $k$ is odd,   the spinor norm of $y$ is not 1, and the result follows.\end{proof}

In the following proof we will use the fact that, if $g \in O(V)$ is semisimple and $W=\{v \in V\ |\ gv=v\}$, then $W$ is nondegenerate. To see this, set $X := W \cap W^{\perp}$. Then $X$, and hence $X^{\perp}$, is $g$-stable. Since $g$ is semisimple, $V=X^{\perp} \oplus Y$ for some $g$-stable subspace $Y$ of $V$. If $y \in Y$, $x \in X$ then $(gy-y,x) = (gy,x)-(y,x) = (gy,gx)-(y,x) = 0$, so $gy-y \in X^{\perp} \cap Y = \{0\}$. This means that $g$ acts trivially on $Y$, in other words $Y \leq W$. Since $W \leq X^{\perp}$, we deduce $Y=\{0\}$. This implies $X^{\perp}=V$, so $X=\{0\}$.

\begin{theor}\label{orth4}
Let $G=\Omega_n(q)$ for $n$ odd, or $\Omega^{\pm}_n(q)$ for $n>2$ even, and let 
$g\in G$. Then $e(g) \leq 4$, unless possibly $G=\Omega^{-}_n(q)$,  $n/2$ odd, $q$ even, $g$ is indecomposable, not semisimple and the composition factors of $g$ on $V=\mathbb{F}_q^n$ are isomorphic $\mathbb{F}_q\langle g\rangle$-modules of dimension $2$. In the exceptional case $e(G)\leq 8$. \end{theor}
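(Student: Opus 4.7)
I would write $g=su$ in its Jordan decomposition ($s$ semisimple, $u$ unipotent, $su=us$) and apply Lemma~\ref{xy8} to get $e_G(g) \leq m\cdot n$ with $m=e_{C_G(s)}(u)$ and $n=e_G(s^m)$. The generic target is $m\leq 2$ and $n\leq 2$, yielding $e(g)\leq 4$; the exceptional configuration will be exactly the one place where the argument for $m\leq 2$ fails inside $\Omega(V)$.

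The extreme cases are handled first. If $s=1$, then $g$ is unipotent and by Lemma~\ref{ev1} (for $q$ even) or Lemma~\ref{u26} (for $q$ odd) one has $e(g)\leq 3$. If $u=1$, then $g$ is semisimple; although semisimple elements of $G=\Omega^\pm_{4k+2}(q)$ need not be real in general (Lemma~\ref{sn2}), a regular semisimple $s$ satisfies $e(s)\leq 3$ by Lemma~\ref{Gow}, while a non-regular semisimple $s$ has some eigenvalue of multiplicity $\geq 2$, which (after pairing with the dual eigenspace when the eigenvalue is not $\pm 1$) yields a proper nondegenerate $s$-invariant subspace; this permits an orthogonal decomposition and an induction on $\dim V$.

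For the mixed case, decompose $V$ by the homogeneous $\mathbb{F}_q\langle s\rangle$-module structure. If $V$ has a proper nondegenerate $g$-invariant direct summand, split $g$ as an orthogonal sum and induct on $\dim V$, with spinor-norm adjustments on each factor carried out using centralizer elements. Otherwise $g$ is indecomposable, and the composition factors of $V$ as an $\mathbb{F}_q\langle g\rangle$-module are all isomorphic of common dimension $k$ and multiplicity $\ell$ with $k\ell=\dim V$. If the common composition factor is not self-dual with respect to the form, then $V=W\oplus W^*$ with $W$ totally singular maximal of dimension $\dim V/2$, $g$ stabilizes the decomposition, and the centralizer structure $C_{O(V)}(s)\cong \GL_\ell(q^k)$ lets the unipotent component be viewed as real inside a general linear group, giving $m\leq 2$. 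If the composition factor is self-dual, the orthogonal analogue of Lemma~\ref{in3} forces $k\in\{1,2\}$; $k=1$ returns to the pure cases already handled, and for $k=2$ the centralizer $C_{O(V)}(s)$ is a unitary or orthogonal group over $\mathbb{F}_{q^2}$ in which $u$ is again unipotent and real by Lemma~\ref{ev1}, yielding $m\leq 2$ and $e(g)\leq 4$ after the spinor-norm descent from $O(V)$ to $\Omega(V)$.

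The main obstacle, and the source of the exceptional case, is precisely this final spinor-norm descent in the self-dual $k=2$ subcase. By Lemma~\ref{au8}, exactly when $G=\Omega^-_n(q)$ with $n/2$ odd and $q$ even, the centralizer $\U_{n/2}(q)$ of $s$ in $O(V)$ intersects $\Omega(V)$ in a subgroup for which the $O$-conjugator from $u$ to $u^{-1}$ cannot always be adjusted to lie in $\Omega$, matching the indecomposable configuration with $2$-dimensional isomorphic composition factors of the statement. In this residual case, $u$ is at least semirational in $C_G(s)$ by Lemma~\ref{u5n}; its order is a power of $2$ (since $q$ is even), so Corollary~\ref{cor881} gives $m\leq 4$, and Lemma~\ref{xy8} then yields $e(g)\leq 8$. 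The hardest step will be the detailed spinor-norm bookkeeping needed to isolate exactly this exceptional configuration and to verify that the adjustment succeeds in every other indecomposable subcase.
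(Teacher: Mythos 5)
Your overall architecture (Jordan decomposition, homogeneous components of $s$, the unitary centralizer $\U_l(q^k)$, spinor norms as the source of the exception) points in the right direction, but the two steps that carry the whole weight of the theorem do not work as stated.

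First, your generic target ``$m\leq 2$ and $n\leq 2$'' via Lemma \ref{xy8} requires $s^m$ to be \emph{real in $G$}, and this is exactly what fails for the groups at issue: by Lemma \ref{sn2}, semisimple elements of $\Omega^{\pm}_{2n}(q)$ with $n$ odd need not be real (indeed Theorem \ref{th2} and Corollary \ref{oo7} exhibit semisimple elements with $e>3$). The same problem sinks your treatment of the residual case: from $m\leq 4$ (semirationality plus Corollary \ref{cor881}) you conclude $e(g)\leq 8$, which tacitly uses $e_G(s^4)\leq 2$; that is false in general here, and $m\cdot e_G(s^m)$ would only give $4\cdot 4=16$. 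The paper obtains $8$ the other way around: $u$ is real in $C_{O(V)}(s)\cong\U_l(q^k)$ by \cite[Theorem 1.8]{TZ04}, so $m=2$, and separately $e(s^2)\leq 4$, giving $2\cdot 4=8$. Relatedly, your induction on orthogonal decompositions does not combine bounds on the pieces in the naive way (a product of $k_1$ conjugates trivial on $V_1$ and $k_2$ conjugates trivial on $V_2$ with $k_1\neq k_2$ gives nothing); the paper's step (ii) instead first forms a product of \emph{two} conjugates that is trivial on the summand of dimension divisible by $4$ (using Gow's characteristic-$2$ result that all elements of $O^{\pm}_{4k}(q)$ are real) and then shows the resulting element, being trivial on a nondegenerate subspace of dimension $\equiv 2\pmod 4$, is real in $\Omega(V)$.

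Second, for $q$ odd you are missing the idea that makes that half of the proof a few lines: by Kn\"uppel--Thomsen every element of $G$ is a product of at most three involutions, and Lemma \ref{gr1}(2),(3) then gives $e(g)\leq 4$ directly, with three small cases checked by computer. Your proposed route for $q$ odd (regular semisimple via Gow, plus induction for non-regular semisimple) would still have to confront the non-realness of semisimple elements and the combination problem above. Finally, your account of where the exception comes from is not quite right: Lemma \ref{au8} is used in the paper \emph{positively}, to show that in the homogeneous case with composition factors of dimension $2k>2$ a suitable product of two conjugates is real in $\Omega(V)$ (via a centralizer element of nontrivial spinor norm); the exceptional case is the degenerate situation $k=1$, where $\Omega_2^-(q)$ is not simple and Gow's theorem cannot be applied inside the diagonal subgroup, so that argument has no starting point.
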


\begin{proof} Suppose that $q$ is odd. If $g=xy$ with $x,y$ involutions, then $g \cdot g^x = 1$, so $e(g) \leq 2$. If $g=xyz$ with $x,y,z$ involutions, then $g \cdot g^z \cdot g^x \cdot g^{zx} = 1$, so $e(g) \leq 4$ (Lemma \ref{gr1}(3)). By \cite[Theorem 8.5]{kt1},   every element of $G$ is a product of at most $3$ involutions, so $e(G) \leq 4$, unless possibly $(n,q) \in \{(3,3),(4,3),(5,3)\}$. These three cases can be checked with \cite{gap}.

Assume that $q$ is even. Then $Z(G)=1$ and $G$ is a subgroup of index 2 in $O(V)$. If $n \geq 3$ is odd, then $\Omega_{n}(q) \cong \PSp_{n-1}(q)$ and the result follows from Theorem \ref{symplectic}. Now assume that $n$ is even. If $4|n$ then $e(G)=2$ by Theorem \ref{un4}, items (3),(4).

Assume that $q$ is even, $n \equiv 2 \mod 4$ (in particular $n\geq 6)$. Let  $G=\Omega^{\pm}_{n}(q)$ and let $V$ be the underlying orthogonal space for $G$.  Let $g=su\in G$,   where $u$ is unipotent, $s$ is semisimple and $su=us$. 

(i) Suppose that  $gW=W$, where $W$ is a non-degenerate proper subspace of $V$ such that $g$ acts trivially on $W^\perp$. Then $g$ is real.

Indeed, $W^\perp$ contains a non-degenerate subspace $W_1$, say, of dimension 2.
Then $4|\dim W_1^\perp$ and every element of $O(W_1^\perp)$ is real by  \cite{Gw}.
Therefore, $g^x=g^{-1}$ for some $x\in O(V)$ such that $x$ acts trivially on $W_1$.
So there exists $x_1\in G$ such that $x_1$ stabilizes $W_1^\perp$ and acts on it as $x$.  Whence the claim. 

(ii) Suppose that $V=V_1 \oplus V_2$, where $V_1, V_2$ are non-zero
 non-degenerate subspaces of $V$ and $gV_i=V_i$ for $i=1,2$. (Sometimes one says that $g$ is orthogonally decomposable.) Then $e(g)\leq 4$.

By reordering $V_1, V_2$ we can assume that $4|\dim V_2$. 
Let $g_1,g_2$ be the restrictions of $g$ to $V_1, V_2$, respectively. By \cite{Gw}, $xg_2x^{-1}=g_2^{-1}$ for some $x\in \Omega (V_2)$. Let $y\in G$ be such that $yV_i=V_i$ for $i=1,2$ and $y$ coincides with $x$ on $V_2$. Then $gygy^{-1}=\diag(g_1',\Id)$. By (i), this element is real, and hence $e(g)\leq 4$.

(iii) Suppose that $g$ is irreducible on $V$. Then $e(g)\leq 3$. In addition, there are 2  conjugates of $g$ whose product is a real and hence the product of some 4   conjugates of $g$  equals 1.

Note that $G$ is simple for $n>2$ even. So the former claim follows from Lemma \ref{Gow} as $g$ is regular semisimple in this case. Moreover, by \cite{Gow} every semisimple element $t\in G$ is a product of two conjugates of $g$. We can choose $t$ real. Indeed,
let $W$ be a non-degenerate subspace of $V$ of dimension 4.  
Then every element of $\Omega(W)$ is real (by the above), so we can choose 
for $t$ any semisimple element of $G$ acting  trivially on $W^\perp$. Whence the second claim.    

(iv) $e(g)\leq 4$ unless $s$ is homogeneous on $ V$. 

To prove this, suppose the contrary. Let $V_1,\ldots, V_m$ be the homogeneous components of $s$ on $V$, where $m>1$. Each homogeneous component is a largest direct sum of isomorphic $\mathbb{F}_q \langle s \rangle$-submodules. As $gs=sg$, we have $gV_i=V_i$ for $i=1,\ldots,m$.  By \cite[Lemma 3.3]{SZ02},  $V_1$ is either non-degenerate or totally singular. In the former case $e(g)\leq 4$ by (ii). 
Strictly speaking, \cite[Lemma 3.3]{SZ02} claims that $V_1$ is either non-degenerate or totally isotropic with respect to   the symplectic form  
associated with quadratic form defined $O(V)$. In the latter case this implies that  $V_1$ is totally singular.  (Indeed, otherwise the totally singular vectors of $V_1$ form a subspace $V_0$, say,  and every vector of $V_1\setminus V_0$ is not singular.  Let $v,v'\in V_1$ be non-singular vectors and $Q$ the quadratic form on $V$ defining $O( V)$. Replacing $v'$ by a multiple
we can assume   that $Q(v)=Q(v')$, and then $Q(v-v')=0$ so $v-v'\in V_0$. 
So $\dim V/ V_0=1$, and hence $s$ has eigenvalue 1. As $s$ is 
homogeneous on $V_1$, it follows that $s=1$, a contradiction.)

Let $m=2$. In this case let $H$ be the stabilizer of $V_1,V_2$ in $G$, so $g\in H$. Then $H\cong \GL_{n/2}(q)$. In fact, there is a basis in $V$ such that the matrix   of $h$ on $V_2$ is the inverse-transpose of that on $V_1$, which yields 
an isomorphism  $\GL_{n/2}(q)\rightarrow H$. (Note that the image is contained in $G$
as $|O(V):G|=2$ whereas the index  $|H:H'|$ is odd  for $n/2>2$ and $q$ even. The latter holds as $n\geq 6$.)

We claim that  $e(g)\leq 4$ if $g$ is decomposable as an element of $X:=\GL_{n/2}(q)$. Indeed, otherwise let $V_1=W_1\oplus W_2$, where $gW_i=W_i\neq 0$ for $i=1,2$. It is well known that there exists a \textit{Witt basis} $b_1,\ldots,b_n$ of $V$ with respect of $W_1$, which means that $b_1,\ldots,b_d$
is a basis of $W_1$ for $d=\dim W_1$, $b_{d+1},\ldots,b_{n-k} \in W_1^\perp$, and if 
$M$ is the matrix of the action of $g$ on $W_1$ with respect to the basis $\{b_1,\ldots,b_d\}$ then $W_3 = \langle b_{n-d+1},\ldots,b_n \rangle$ is $g$-stable and the matrix of the action of $g$ on this basis of $W_3$ is ${}^tM^{-1}$. Moreover, $W_1 \leq V_1 \leq V_1^{\perp} \leq W_1^{\perp}$, so the subspace $W_1+W_3$ is non-degenerate. As $W_2 \neq 0$ and $W_3 \cap W_1 = \{0\}$ we have $W_1+W_3 < V$. Since $V=W_1+W_3+(W_1+W_3)^\perp$ we have $e(g)\leq 4$ by (ii).

So we assume that $g$ is indecomposable as an element of $X=\GL_{n/2}(q)$. Let $h,t,v\in X$ be the elements of $X$ that are images of $g,s,u$ under the isomorphism $H \rightarrow X$. 
By Lemma \ref{in3}, $h$ is contained in a subgroup $X_1 X_2$, where $X_1\cong \GL_k(q)$, $X_2\cong \GL_l(q)$, $kl=n/2$, $[X_1,X_2]=1$ and $t\in X_1$ is irreducible as an element of $\GL_k(q)$, $v\in X_2$ is a Jordan block of size $l$ in $\GL_l(q)$. (Note that $X_1  X_2$ is isomorphic to the central product $\GL_k(q)\circ\GL_l(q)$, and this is the image of the Kronecker product homomorphism  $\GL_k(q)\times  \GL_l(q)\rightarrow GL_{n/2}(q)$.)

Note that $k$ is odd as so is $n/2$. Suppose that $k>1$. Note that there exists a non-scalar semisimple element $y_1\in \GL_{k}(q)$ having eigenvalue 1 such that $\det y_1 = \det t^2$. (Indeed, if $\det t^2 = a \neq 1$ then we can choose $y_1=\diag(a,\Id)$;  if $\det t^2 = 1$ and $q>2$ then we can choose  $y_1 = \diag(a,a^{-1},\Id)$ for $1\neq a \in \mathbb{F}_q$; if $q=2$ then we can choose $y _1$ of order 3.)
 By Lemma \ref{GuT1}, $y_1$ is a product of two conjugates of $t$. Using the isomorphism $X\rightarrow H$, we observe that
the product of the respective two conjugates of $s$ in  $H$ is a semisimple element $y$, say, that has eigenvalue $1$.  As $v$ is real in $X_2$, we conclude that  the product of some two conjugates of $g$ in $H$ is of the form $y':=\diag(y,y^{-1})$. Since $y$ is semisimple, the 1-eigenspace $W'$ of $y'$ on $V$ is non-zero and nondegenerate. By  (i) above,  $y'$ is real, so $e(g)\leq 4$.
 
Let $k=1$. Then $g$ stabilizes a singular line $\langle v\rangle $ on $V$.
Then $gv=av$ for some $a\in\mathbb{F}_q$. If $a= 1$ then the $1$-eigenspace of the semisimple part $s$ of $g$ on $V$ is non-zero and nondegenerate, and it is also $g$-invariant since $gs=sg$, so the result follows by (ii). If $a\neq 1$ then the $a$- and 

Then the result follows from (ii), unless $V=W+W'$. This means that $V$ is of type $+$,
which is a contradiction.  

(v) By (iv), we can assume that $m=1$, that is,  $s$ is homogeneous on $ V$. Let $l$ be the composition 
length of $s$ on  $V$, where $l>1$ (if $l=1$ then $s$ is irreducible on $V$ and $e(g)\leq 4$ by (iii)).

Note that $\langle s\rangle $ is completely reducible. It follows from  Schur's lemma  (applied to every irreducible constituent of $s$ on $V$) that $K$, the $\mathbb{F}_q$-span of $\langle s\rangle$ in $\mbox{End}_{\mathbb{F}_q}(V)$, is a field, so $V$ can be viewed as a vector space over $K$. Indeed, under some basis, $s$ is represented by a matrix $\diag(t,...,t)$, where $t$ is an irreducible matrix. Then the span of $s$ is isomorphic to the span of $t$.

It is shown in \cite[Lemma 6.6(1)]{ez1} that $|K|=q^{2k}$, where $2k=n/l$ is the dimension of every composition factor, and $C_{O(V)}(s)\cong \U_l(q^k)$. 
(The meaning of $k,l$ here differ from those in (iv).) As $n/2$ is odd, both $k,l$ are odd. Let $V_K$ denote $V$ viewed as a $K$-module.  Then, by \cite[Lemma 6.6(2)]{ez1}, $V_K$ is a unitary $K$-space, and the unitary geometry on $V_K$ agrees with orthogonal geometry on $V$ in the sense that for a $K$-subspace $W$ the space $W^\perp$ is the same in each geometry. In addition, $W$ is totally singular (non-degenerate) if and only if $W$ is totally isotropic (respectively, non-degenerate) in the unitary geometry.
In addition, $g$ stabilizes no non-degenerate   subspace of $V$ if and only if
$g$ stabilizes no non-degenerate subspace of $V_K$  equivalently,
$u$  stablizes no non-degenerate  subspace of $V_K$. By \cite[Lemma 6.1]{TZ04},  this is equivalent to saying that the Jordan form of $u$ on $V_K$ consists of a single block. 

It is well known that $V_K$ is a direct sum of non-degenerate one-dimensional subspaces.
It follows that $V$ decomposes as a direct sum of  non-degenerate $s$-stable subspaces, each  of dimension $2k$. Therefore, $s$ is contained in the stabilizer of such a decomposition and hence we can write $s=\diag(t,\ldots,t)$, where $t\in O(W)$ and $W$ denotes one of the summands. As $|s|$ is odd, $t\in \Omega(W)$. In addition, as $s$ is irreducible in $O(W)$,
we have $O(W)\cong O_{2k}^-(q)$ see \cite[Satz 3(b)]{Huppert}. This means that 
$G=\Omega^-_{2n}(V)$ (as $V$ is a direct sum of $l$  non-degenerate subspaces of $-$ type and $l$ is odd, see \cite[Proposition 2.5.11]{KL}).  This implies that $e(g)\leq 4$ if $G=\Omega^+_{n}(q)$, or if $G=\Omega^-_{n}(q)$ and  $g$ is semisimple.

Let $S=\{\diag(a,\ldots,a)\ |\ a\in \Omega(W)\}$. Then $S\subset G$, $s\in S$ (as $|s|$ is odd). Let $\eta:S\rightarrow \Omega(W)$ be an isomorphism. 

Let $M'$ be a maximal totally isotropic subspace of $V_K$ and let $M$ be the corresponding subspace of $V$. Then $M$ is totally singular, $\dim M'=(l-1)/2$ and $\dim M=2k(l-1)/2$. Then there exist totally isotropic subspaces $M_1'<\cdots<M'_{(l-1)/2}$ with factors of dimension 1. Let $0=M_0<M_1<\cdots<M_{(l-1)/2}$ be the respective chain of $K$-stable subspaces of  $V$.
Then the stabilizer of all these subspaces is a parabolic subgroup $P$ of $G$ and $O_2(P)$
consists of all $x\in P$ acting trivially on the subsequent factors of the chain as well as 
on $M_{(l-1)/2}^\perp/M_{(l-1)/2}$. Let $Q_i$ be any complement of $M_i$ in $M_{i+1}$
for $i=0,\ldots,(l-1)/2$ (which are totally singular), and $Q_{(l-1)/2}$ is a complement of  $M_{(l-1)/2}$ in $M^\perp_{(l-1)/2}$, which is non-degenerate.  Note that $Q_{(l-1)/2}$ is non-degenerate, and $Q_i$ is totally singular for $i=0,\ldots,(l-1)/2$.
In addition, $P=O_2(P).L$, where
 $L=\{y\in P: yQ_i=Q_i\}$ for $i=0,\ldots,(l-1)/2$ is a Levi subgroup of $P$. Then the restriction of 
$L$ on $Q_i$ is $\GL(Q_i)$ for $i=0,\ldots,(l-3)/2$ and $\Omega^-_{2k}(q)$ for $i=(l-1)/2$.
 More precisely,  
$L\cong \GL(Q_1)\times \ldots \times \GL(Q_{(q-1)/2}) \times \Omega^-(Q_{(l-1)/2})$. 

Due to the Schur-Zassenhaus theorem we can assume that $s\in L$. Let $s_i$ be the projection of $s$ to $GL(Q_i)$  for $i=0,\ldots,(l-1)/2$. As $s$ is homogeneous on $V$,
under some basis $B$ of $M_{(i-1)/2}$ the matrix of $s_i$ on $Q_i$ is   the same for $i=1,\ldots ,(i-1)/2$. We denote by $S$ the subgroup of $L$ consisting of the elements which have the same matrix on each $Q_i$ for $i=1,...,(i-1)/2$ under the above basis $B$. As $L$ acts on $Q_{(l-1)/2}$ as $\Omega^-_{2k}(q)$, we observe that $S\cong \Omega^-_{2k}(q)$ and acts faithfully on every composition factor of $g$ on $V$.  
(We can choose  a Witt basis $B$ of $V$ such that $B\cap Q_i$ is a basis of $Q_i$ for $i=0,\ldots,(k-1)/2$; then $S$ can be written as the set of matrices in $L$ of the form $\diag(t,\ldots,t)$, $t\in \Omega^-_{2k}(q)$.) 

Suppose that $k>1$. As $k$ is odd,  $k\geq 3$ and  $\Omega^-_{2k}(q)$ is simple. 
By \cite{Gow},
every semisimple element $t\in S\cong \Omega^-_{2k}(q)$ is a product of two conjugates of $s$.  We can choose $t$ such that $m(t)$, the eigenvalue 1 multiplicity  of $\eta(t)$, is not a multiple of 4 (indeed, as $2k\geq 6$, we can find $\eta(t)$ of order 3  with $m(\eta(t))=2k-4$). Then the  eigenvalue $1$ multiplicity of $t$  is $2d:=l \cdot m(\eta(t))$, which is not a multiple of $4$ too (since $l$ is odd). Let $x\in S$ be such that $sxsx^{-1}=t$.  Then $h := gxgx^{-1} = us xsx^{-1} \cdot xux^{-1} = utxux^{-1} = tu'$, where $u'=t^{-1}ut \cdot xux^{-1}\in O_2(P)$. Let $V_0$ be the $1$-eigenspace of $t$ on $V$; then $V_0$ is non-degenerate and  $hV_0=V_0$. 
By the above, $\dim V_0\equiv 2\pmod 4$. Then $4|\dim V_0^\perp$.  
Let $h=\diag(h_0,h_1)$ with $h_0,h_1$ the restrictions of $h$ to $V_0$, $V_0^\perp$, respectively.  
Then $h_0 \in O(V_0)$ is unipotent and $h_1$ is real in  $O( V_0^\perp)$. Then $h_0$ is real in $O(V_0)$ and $h_1$ is real in $O (V_0^\perp)$, so $h$ is real in $O(V)$. As $(\dim V_0)/2$ is odd, $C_{O(V_0)}(h_0)$ contains an element that does not lie in $\Omega(V_0)$.
Since $|O(V)/\Omega(V)|=2$, it follows that $h$ is real in $G$, and hence $e(g)\leq 4$.

Finally suppose that $k=1$. By \cite[Theorem 1.8]{TZ04}, $u$ is real in $\SU_l(q^k)$, and $e(s^2)\leq 4$. So $e(g)\leq 8$.
\end{proof}

Next we look at the lower bound for $e(G)$ with $G$ as above. 
We need the following lemma.

\begin{lemma}\label{qe1} Let $F$ be an algebraically closed field and let  $A,B\in \GL_n(F)=\GL(V)$ be diagonalizable elements with exactly two eigenvalues. Let $X=\langle A,B\rangle$. Then the composition factors of X on V are of dimension $1$ or $2$. \end{lemma}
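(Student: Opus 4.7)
The plan is to reduce, via semisimplicity, to showing that every irreducible $X$-subquotient $U$ of $V$ satisfies $\dim_F U \leq 2$. Since $A$ and $B$ are diagonalizable and diagonalizability is preserved under restriction to an invariant subspace and passage to a quotient (with eigenvalues remaining a subset of the original), the induced maps $\bar A, \bar B \in \operatorname{End}(U)$ are diagonalizable with at most two eigenvalues each. If either of them has only one eigenvalue on $U$, it is scalar and $U$ is irreducible for a cyclic (hence commutative) subalgebra, forcing $\dim U = 1$. So I may assume that $\bar A$ has two distinct eigenvalues $\lambda_1 \neq \lambda_2$ and $\bar B$ has two distinct eigenvalues $\mu_1 \neq \mu_2$ on $U$.

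I then introduce the spectral idempotents
$$P := \tfrac{1}{\lambda_1 - \lambda_2}(\bar A - \lambda_2 I), \qquad Q := \tfrac{1}{\mu_1 - \mu_2}(\bar B - \mu_2 I),$$
so $P^2 = P$, $Q^2 = Q$, and the $F$-subalgebra $\mathcal{A} \subseteq \operatorname{End}(U)$ generated by $\bar A, \bar B$ coincides with the one generated by $\{I, P, Q\}$. Because the $\mathcal{A}$-invariant subspaces of $U$ agree with the $X$-invariant ones, $\mathcal{A}$ acts irreducibly on $U$. The crux of the argument is to verify that $Y := (P - Q)^2$ is central in $\mathcal{A}$: expanding $(P - Q)^2 = P + Q - PQ - QP$ and using $P^2 = P$, $Q^2 = Q$, a direct calculation yields $PY = YP = P - PQP$, and the symmetric calculation gives $QY = YQ$. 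Since $F$ is algebraically closed and $\mathcal{A}$ acts irreducibly on $U$, Schur's lemma forces $Y = cI$ for some $c \in F$, equivalently
$$PQ + QP = P + Q - cI.$$

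Left-multiplying this relation by $P$ and using $P^2 = P$ yields $PQP = (1 - c)P$, and symmetrically $QPQ = (1 - c)Q$. Combined with $P^2 = P$, $Q^2 = Q$, and the rewrite $QP = P + Q - cI - PQ$, these identities let one reduce any word in $P$ and $Q$ to an $F$-linear combination of $\{I, P, Q, PQ\}$. Hence $\dim_F \mathcal{A} \leq 4$. Since $\mathcal{A}$ acts irreducibly on the finite-dimensional space $U$ over the algebraically closed field $F$, Burnside's density theorem gives $\mathcal{A} = \operatorname{End}(U)$, so $(\dim U)^2 \leq 4$ and $\dim U \leq 2$, as required. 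The main obstacle is spotting and verifying the centrality of $(P - Q)^2$; once that is in hand, the derivation of $PQP = (1 - c)P$, the length-reduction of alternating words, and the appeal to Burnside are all routine.
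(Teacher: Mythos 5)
Your proof is correct, but it takes a genuinely different route from the paper's. The paper argues by contradiction on an irreducible composition factor of dimension $d>2$: it observes that the intersection of a $\lambda$-eigenspace of $A$ with a $\mu$-eigenspace of $B$ is $X$-stable (hence zero), deduces that the two dominant eigenspaces each have dimension exactly $n/2$ and are complementary, puts $A$ and $B$ into explicit $2\times 2$ block form, normalizes the off-diagonal blocks (making $D=\Id$ and $C$ upper triangular via a conjugation fixing $B$), and exhibits a concrete $2$-dimensional $X$-stable subspace — a hands-on eigenspace and block-matrix computation. You instead pass to the spectral idempotents $P,Q$ and prove the classical structural fact that $(P-Q)^2$ is central in the algebra $\mathcal{A}$ they generate; Schur's lemma then forces $(P-Q)^2=c\cdot\Id$ on an irreducible constituent, the relations $PQP=(1-c)P$, $QPQ=(1-c)Q$ collapse all words to the span of $\{\Id,P,Q,PQ\}$, and Burnside gives $(\dim U)^2\le\dim\mathcal{A}\le 4$. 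Your argument is more conceptual and in fact proves the stronger statement that every irreducible representation of a group generated by two elements with split quadratic minimal polynomials has dimension at most $2$ (essentially the representation theory of a free product of two cyclic groups acting through quadratic elements), whereas the paper's argument is elementary linear algebra that produces the invariant subspace explicitly; both use the standing hypotheses (algebraic closure enters for you via Schur and Burnside, for the paper via the eigenvalues lying in $F$). All the steps you flag as routine — the reduction to irreducible subquotients, the equality of $X$-invariant and $\mathcal{A}$-invariant subspaces, the centrality computation, and the word reduction — check out.
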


\begin{proof} Suppose the contrary, and let $W$ be a composition factor of dimension $d>2$. Since $A$ and $B$ are diagonalizable, they both have exactly two eigenvalues on $W$. So we can assume that $W=V$, so $X$ is irreducible.  

Let $\lambda_1,\lambda_2$,  $\mu_1,\mu_2$ be the eigenvalues   of $A,B$, respectively, and we can assume  (by reordering them)  that 
the multiplicity of $\lambda_1$  and of $\mu_1$ is at least $n/2$.    Let $V_1,V_2$ be 
$\lambda_1,\mu_1$-eigenspaces of $A,B$, respectively. As $X$ is irreducible, we have $V_1\cap V_2=0$, and hence $\dim V_1=\dim V_2=n/2$ and $V_1+V_2=V$. Choose a basis $K$
of $V$ such that $K_i:=K\cap V_i$ is a basis of $V_i$, $i=1,2$, ordered so that the elements of $K_1$ are prior to those of $K_2$. Under such a basis the matrices of $A,B$ are of the form
$$\begin{pmatrix}\lambda_1 \cdot \Id&C\\ 0&\lambda_2 \cdot \Id\end{pmatrix}, \hspace{1cm}
\begin{pmatrix}\mu_2 \cdot \Id&0\\ D&\mu_1 \cdot \Id\end{pmatrix}$$ 
where $C,D$ are square matrices of size $n/2$, and $D\neq 0$ as $X$ is irreducible.
Let $\eta:V_1\rightarrow  V_2$ be the mapping defined by $\eta(v) := (B-\mu_2\Id)v\,$ 
for $v\in V_1$.
Since ${\rm ker}\, \eta$ is $X$-stable, $D$ is nonsingular, so $\eta(B_1)$ is a basis of $V_2$, hence we can choose $K_2=\eta(K_1)$ so that $D$ is replaced by the identity matrix.  
Now we can take $C$ to the upper triangular form  by
conjugating both $A,B$ by a suitable matrix $N=\diag(M,M)$ with $M\in \GL_{n/2}(F)$. Note that $NBN^{-1}=B$. Then the vectors $v_1=(1,0,\ldots, 0)$, $v_2=(0,\ldots, 0,1,0,\ldots ,0)\in F^{2n}$ span a 2-dimensional $X$-stable subspace of $V$ (here $1$ is located at the $(n+1)$-th position). \end{proof}

Consider the  matrix $J=\begin{pmatrix}0&\Id_n\\ \Id_n&0\end{pmatrix}$. Then $J$ is the Gram matrix of the natural basis $B$ of $V=F^{(n)}$. The orthogonal group $O^+_{2n}(F)$ is contained in $H=\{ h\in \GL_{2n}(F): JxJ^{-1}={}^tx^{-1}\}$, where ${}^t$ denotes the transposition operation. If $F$ is not of characteristic $2$ then $H\cong O^+_{2n}(F)$. If 
 $F$ is of characteristic $2$ then $H\cong Sp_{2n}(F)$ and $H$ contains subgroups isomorphic to  $O^\pm_{2n}(q)$ defined in terms of certain quadratic forms. The group $G$
in Theorem \ref{th2} is a subgroup of  $H$. 

We use this notation in the proof of the following result.

\begin{theor}\label{th2} Let $F$ be a field of characteristic $p\geq 0$.  For $n$ odd let $G=\SO^+_{2n}(F)$ if $p\neq 2$, and $G=\Omega^+_{2n}(F)$ if $p=2$. 
Suppose that $F^\times $ has an element $\mu$ such that $\mu^6\neq 1$.
Let $g = \diag(\mu\cdot \Id_n,\mu^{-1}\cdot \Id_n) \in G$. Then $e(g)>3$. In addition, if $\mu^{6}\neq \pm 1$ and $g'$ is the image of $g$ in $G/Z(G)$ then $e_{G/Z(G)}(g')>3$.
\end{theor}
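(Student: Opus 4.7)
The plan is to suppose, toward a contradiction, that $e(g) \le 3$, producing $G$-conjugates $g_1, g_2, g_3$ of $g$ with $g_1 g_2 g_3 = 1$, and derive a contradiction from the geometry of their eigenspaces. The element $g$ has eigenspaces $L = \langle b_1, \ldots, b_n \rangle$ and $L' = \langle b_{n+1}, \ldots, b_{2n} \rangle$ on which it acts by the scalars $\mu$ and $\mu^{-1}$; the identity $B(gv, gw) = \mu^2 B(v, w)$ for $v, w \in L$, together with $\mu^2 \ne 1$ (which follows from $\mu^6 \ne 1$), forces $B$ to vanish on $L$, so $L$ and $L'$ are maximal totally isotropic subspaces of dimension $n$. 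Each $g_i$ is similarly determined by an eigenspace decomposition $V = L_i \oplus L_i'$ into maximal totally isotropic subspaces of dimension $n$, with $g_i$ acting by $\mu$ on $L_i$ and by $\mu^{-1}$ on $L_i'$.

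The crux is to show that the four intersections $L_1 \cap L_2$, $L_1 \cap L_2'$, $L_1' \cap L_2$, $L_1' \cap L_2'$ are all zero. For $v \in L_1 \cap L_2$ one computes $g_1 g_2 v = \mu \cdot \mu v = \mu^2 v$, and the analogous computation shows that $g_1 g_2$ acts on the three remaining intersections as the scalars $1$, $1$, $\mu^{-2}$ respectively. Since $g_1 g_2 = g_3^{-1}$ is conjugate to $g^{-1}$, all its eigenvalues lie in $\{\mu, \mu^{-1}\}$; so any nonzero intersection would force one of $\mu^2, 1, \mu^{-2}$ to equal $\mu$ or $\mu^{-1}$, hence $\mu = 1$ or $\mu^3 = 1$, both excluded by $\mu^6 \ne 1$.

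I conclude by invoking the classical two-family classification of maximal totally isotropic subspaces in a $2n$-dimensional hyperbolic orthogonal space over any field: these subspaces form exactly two $\Omega^+_{2n}$-orbits, and two such $U, W$ lie in the same family if and only if $\dim(U \cap W) \equiv n \pmod 2$. With $n$ odd and all pairwise intersections of $L_1, L_2, L_2'$ equal to $0 \not\equiv n \pmod 2$, these three subspaces would be pairwise in different families, which is impossible with only two families available. The small case $n = 1$ is handled directly: $G$ is then abelian and $e(g)$ equals the order of $g$, which exceeds $3$ since $\mu^6 \ne 1$.

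For the second assertion, the relation $g_1' g_2' g_3' = 1$ in $G/Z(G)$ lifts to $g_1 g_2 g_3 = z$ with $z \in Z(G) \subseteq \{\pm \Id\}$, each $g_i$ being a $G$-conjugate of $g$ or of $-g$. Introducing signs $\epsilon_i, \zeta \in \{\pm 1\}$ accordingly, the same intersection computation produces scalars $\epsilon_1 \epsilon_2 \mu^{\pm 2}$ and $\epsilon_1 \epsilon_2$ on the four intersections, each of which must coincide with $\zeta \epsilon_3 \mu$ or $\zeta \epsilon_3 \mu^{-1}$; a short case-check shows this forces $\mu \in \{\pm 1\}$ or $\mu^3 \in \{\pm 1\}$, excluded by $\mu^6 \ne \pm 1$, and the family argument then concludes as before. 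The main obstacle is thus the sign bookkeeping for the quotient statement; the underlying geometric idea is identical.
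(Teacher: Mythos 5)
Your argument that $1$ is not a product of \emph{three} conjugates of $g$ is correct and takes a genuinely different route from the paper's: where the paper studies the composition factors of $X=\langle g,h\rangle$ on $V$ (via Lemma \ref{qe1}), shows every minimal $X$-submodule is totally singular, and extracts the parity contradiction $4\mid 2n$, you work directly with the eigenspace decompositions $V=L_i\oplus L_i'$ of the three conjugates and invoke the classical two-family theorem for maximal totally singular subspaces of a hyperbolic space, getting the contradiction from three subspaces pairwise in different families. The eigenvalue computation on the four intersections is right, the classification you cite is valid over an arbitrary field, and the sign bookkeeping for the quotient is sound; this part of the proposal is a clean alternative to the paper's representation-theoretic argument.

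The gap is at the very first step: $e(g)\le 3$ does \emph{not} produce conjugates $g_1,g_2,g_3$ with $g_1g_2g_3=1$; it means $1\in C\cup C^2\cup C^3$, and $1\in C^2$ does not imply $1\in C^3$. So to prove $e(g)>3$ you must separately rule out $1\in C^2$, i.e.\ that $g$ is real in $G$ (and, for the second assertion, that $g'$ is real in $G/Z(G)$). Your intersection argument genuinely fails to detect this case: if $g_2=g_1^{-1}$ then $L_2'=L_1$, so $L_1\cap L_2'=L_1\neq 0$, but the scalar by which $g_1g_2=1$ acts there is $1$, which is an eigenvalue of the identity, and no contradiction arises. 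The paper devotes a substantial portion of its proof to exactly this point: the determinant argument $\det(Jx)=-1$ for $p\neq 2$, the spinor norm of $J$ for $p=2$, and a separate computation showing $g'$ is not real when $\mu^4\neq 1$. Your own machinery could close the gap cheaply — $G$ (which is $\SO^+_{2n}(F)$ for $p\neq 2$ and $\Omega^+_{2n}(F)$ for $p=2$, precisely the subgroup preserving each of the two families) cannot conjugate $g$ to $g^{-1}$, since that would force $xL=L'$ while $L$ and $L'$ lie in different families because $\dim(L\cap L')=0\not\equiv n\pmod 2$ — but as written the proposal only establishes $1\notin C^3$, not $e(g)>3$.
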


\begin{proof} Let $V=F^{n}$ be the underlying space of $G$. Let $W,W'$ be the $\mu$-, $\mu^{-1}$-eigenspaces of $g$, respectively. Observe that $W,W'$ are totally singular (as a $\mu$-eigenvector is singular unless $\mu=\pm  1$) and $JW=W'$. 

We claim that $J\in O^+_{2n}(F)$. This is clear for $p\neq 2$.  Suppose  that $p=2$, and let $Q$ be the quadratic form on $V$ defining $O^+_{2n}(F)$. We need to show that $Q(Jv)=Q(v)$ for every $v\in V$. Recall that $Q(v+v')=Q(v)+Q(v')+(v,v')$, where
$(v,v')$ is a bilinear form defining $\Sp_{2n}(F)$. As $J\in \Sp_{2n}(F)$, it suffices to show that $Q(Jv)=Q(v)$ for  $v$ running over a basis of $V$. Let $B$ be the natural bsis of $F^n$.  Then $B\cap W$, $B\cap W'$ are bases of $W,W'$, respectively. Then $Q(v)=0$ for every $v\in B$ as both $W,W'$ are totally singular.  Since $Jv\in W\cup W'$ for $v\in B$, the claim follows.  

In addition, $J\notin G$. This is obvious for $p\neq 2$ as $\det J=-1$. If $p=2$ then $J$ is a product of $n$ reflections and $n$ is odd, so the claim follows from the definition of the spinor norm, see \cite[p. 29]{KL}. 

We claim that $g$ is not real in $G$ for $p\neq 2$.   Indeed, if $xgx^{-1}=g^{-1}$ for $x\in G$ then $Jxgx^{-1} J^{-1}={^t}g=g$, so $Jx\in C_{\GL_{2n}(F)}(g)$, hence $Jx=\diag(y,y')$ for some $y,y'\in \GL_n(F)$. Since $Jx \in O_{2n}^{+}(F)$, we have $y'={^t}y^{-1}$ hence $\det Jx = 1$. This is a contradiction, as $\det J=-1$ (since $n$ is odd) and $\det x = 1$. (This argument is valid if $\mu^2\neq 1$.) 
Therefore,  $e(g)>2$. 

Observe that $g' = gZ(G)$ is not real in $G/Z(G)$ if $\mu^{4}\neq 1$. Indeed, if $xgx^{-1}= z g^{-1}$ for $1\neq z\in Z(G)$ and $x \in G$ then $z^2=1$ and $xg^2x^{-1} = g^{-2} = \diag(\mu^{-2}\cdot\Id,\mu^2\cdot\Id)$. This is false unless $\mu^4=1$ as we have just seen. So $e_{G/Z(G)}(g')>2$.

If $p=2$ then $g$ is not real in $O(V)$. Indeed,    if $xgx^{-1}=g^{-1}$ for $x\in G$ then 
the spinor norm of $x$ and of $Jx=\diag(y,{}^ty^{-1})$
is 1, and hence so is that of  $J$, which is false. So  $x\notin \Omega^+_{2n}(F)$.

Thus $g$ is not real in $G$.
Assume by contradiction that $e(g) \leq 3$. Then $e(g)=3$, so there exists $h \in G$ such that $g,h$ are conjugate in $G$ and $gh$ is conjugate to $g^{-1}$. Set $X := \langle g,h \rangle$. Then 

(*)  $g,h$ have no common eigenvector on every composition factor $M$ of $X$ on $V$.

Indeed, if $0\neq v\in M$ and $gv=\mu v$, $hv=\mu^{\pm 1}v$ then $gh v=
\mu \cdot \mu^{\pm 1} v \in\{v, \mu^2 v\}$. As $\mu^{\pm 1}$ are the only eigenvalues of $g^{-1}$ we have $\mu^2=\mu^{-1}$, whence $\mu^3=1$. 

This implies, by Lemma \ref{qe1}, that all composition factors of $X $ on $V$ are of dimension 2.

(**) Observe that $O_2^\pm  (F)$ has an abelian normal subgroup  
of index 2 (see for instance \cite[Proposition  2.9.1(iii)]{KL}). If $p\neq 2$ then $\SO_2(F)$
is abelian,  see \cite[\S 6, item (3) on p. 49]{Di}.

Let $W$ be a minimal non-trivial $FX$-submodule of $V$. By (**),  $W$ is degenerate and hence totally singular.  
(Indeed, otherwise the restriction $X_W$ of $X$ to $W$ is a subgroup of $O(W) $. 
If $p=2$ then all semisimple elements of $O(W)$ commute and hence $g,h$ have a common eigenvector on $W$.   Let $p\neq 2$. As both $\mu,\mu^{-1}$ are eigenvalues of 
$g$ (and of $h$), we observe that $X_W\subset \SO(W)$. So $X_W$ is abelian and hence cannot be irreducible.) 
 
Let $L$ be a maximal totally singular $X$-stable subspace of $V$.
Then $L^\perp/L$ is a non-degenerate orthogonal space and, by the maximality of $L$,
this has no totally singular  $X$-stable subspace.
However, if $W_1$ is a minimal $FX$-submodule of $L^\perp/L$ then, by (*) as above,  we conclude that $W_1$ is totally singular. This is a contradiction unless $L=L^\perp$, and then $\dim V=2\dim L$. Then $\dim L$ is even as all composition factors of $X$ on $L$ are of dimension 2. Therefore, $4|\dim V$, a contradiction.

 If $p=2$  then $Z(G)=1$ and the additional statement is trivial. 
If $p\neq 2$ then $|Z(G)|=2$. We have seen above that $g'$ is not real if $\mu^6 \neq \pm 1$, so   $e(g')\geq 3$. Suppose that $e(g')= 3$. Then (*) remains true if $\mu^6\neq 1$, and all further considerations are valid with no changes, leading to a contradiction.
\end{proof}

\begin{lemma}\label{dd2} Let V be an orthogonal space over ${\mathbb F}_q$ of dimension $2n$. 

$(1)$ If $V$ is of type $+$ then $V$ is an orthogonal sum of $n$ dimension $2$ non-degenerate subspaces of type $+$, and if n is even then $V$ also is an orthogonal sum of $n$ dimension $2$ non-degenerate subspaces of type $-$. 

$(2)$ If $V$ is of type $-$ and $n$ is odd then $V$ is an orthogonal sum of $n$ dimension $2$ non-degenerate subspaces of type $-$.

$(3)$ Suppose that n,q are odd. Then $Z(\Omega(V))\neq 1$ 
if and only if $V$ is of type $+$ and   $4|(q-1)$  or $V$ is of type $-$ and $4|(q+1)$.  
\end{lemma}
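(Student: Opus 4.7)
The plan is to obtain (1) and (2) from Witt's theorem combined with the multiplicativity of the type under orthogonal direct sums, and then to prove (3) by a spinor-norm computation for $-I$ using those decompositions.

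For (1), I would first recall that a non-degenerate orthogonal $\mathbb{F}_q$-space of type $+$ and dimension $2n$ has Witt index $n$, so by Witt's theorem (see e.g.\ \cite[Proposition 2.5.4]{KL}) it is isometric to an orthogonal sum of $n$ hyperbolic planes, each a $2$-dimensional type-$+$ space; this gives the first assertion. For the second assertion, I use the multiplicativity of the type under the orthogonal direct sum of $2$-dimensional non-degenerate spaces: if $U_1,U_2$ are non-degenerate $2$-dimensional spaces of types $\epsilon_1,\epsilon_2$, then $U_1\perp U_2$ has type $\epsilon_1\epsilon_2$, as one sees from $\mathrm{disc}(U_1\perp U_2)=\mathrm{disc}(U_1)\cdot\mathrm{disc}(U_2)$ together with the standard identification of type via the discriminant. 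Hence an orthogonal sum of $n$ anisotropic planes has type $(-)^n$, equal to $+$ when $n$ is even, and is therefore isometric to $V$ by Witt's theorem. Part (2) is identical: $(-)^n=-$ when $n$ is odd.

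For (3), I assume $n\ge 3$, so that $\Omega(V)$ is quasisimple and $Z(\Omega(V))=\Omega(V)\cap Z(O(V))=\Omega(V)\cap\{\pm I\}$. Since $\det(-I)=(-1)^{2n}=1$, we have $Z(\Omega(V))\ne 1$ iff $-I\in\Omega(V)$, equivalently the spinor norm $\theta(-I)$ is trivial in $\mathbb{F}_q^\times/(\mathbb{F}_q^\times)^2$. In an orthogonal basis $e_1,\ldots,e_{2n}$ with $(e_i,e_i)=a_i$, one has $-I=r_{e_1}\cdots r_{e_{2n}}$, so $\theta(-I)\equiv\prod_i a_i$ modulo squares, i.e.\ the discriminant of the form. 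I then evaluate this discriminant via (1) and (2). For $V$ of type $+$, taking the orthogonal pair $e+f,\,e-f$ inside each hyperbolic plane yields local contribution $2\cdot(-2)\equiv-1$, so $\theta(-I)\equiv(-1)^n=-1$ since $n$ is odd, which is a square iff $-1\in(\mathbb{F}_q^\times)^2$, iff $4\mid q-1$. For $V$ of type $-$ with $n$ odd, by (2) I decompose $V$ as an orthogonal sum of $n$ anisotropic planes; such a plane has discriminant $-\epsilon$ modulo squares for some non-square $\epsilon$, so the total discriminant is $(-\epsilon)^n\equiv-\epsilon$ (as $n$ is odd), which is a square iff $-1$ is itself a non-square, iff $4\mid q+1$.

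The main technical point is the discriminant bookkeeping on an anisotropic plane and the way the sign $(-1)^n$ couples with the non-squareness of $\epsilon$; once that is in place, the statement of (3) drops out directly from the product-of-reflections formula for the spinor norm. I note that the statement of (3) should be read under the implicit assumption $n\ge 3$ (so that $\Omega(V)$ is quasisimple), as for $n=1$ the group $\Omega^\pm_2(q)$ is cyclic and its centre behaviour is governed separately.
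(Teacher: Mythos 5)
Your proof is essentially correct, but it takes a genuinely different route from the paper: the paper disposes of this lemma in one line by citing \cite[Propositions 2.5.11(ii) and 2.5.13]{KL}, whereas you reconstruct the content of those propositions from scratch. What your approach buys is a self-contained argument: the decompositions in (1)--(2) via Witt's theorem plus multiplicativity of the type sign (equivalently of $(-1)^{m}\mathrm{disc}$) under orthogonal sums, and then (3) by writing $-I$ as a product of $2n$ reflections so that $\theta(-I)$ is the discriminant of $V$ modulo squares; your discriminant bookkeeping for the hyperbolic and anisotropic planes is correct, as is the reduction $Z(\Omega(V))=\Omega(V)\cap\{\pm I\}$ (and you are right that (3) must be read with $n\geq 3$, since $\Omega_2^{\pm}(q)$ is abelian and the stated criterion fails there).

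One genuine, though easily repaired, gap: parts (1) and (2) of the lemma carry no parity restriction on $q$, and the paper does invoke them for $q$ even (in Corollary \ref{oo7} and Theorem \ref{orth4}). Your identification of the type via the discriminant, and the computation $\mathrm{disc}(U_1\perp U_2)=\mathrm{disc}(U_1)\,\mathrm{disc}(U_2)$, are valid only for $q$ odd; in characteristic $2$ the bilinear form is alternating and the discriminant carries no information, so the type must be tracked by the Arf invariant (or Dickson invariant) instead, which is additive over orthogonal sums and yields the same conclusion. You should either add that case or state explicitly that you are proving (1) and (2) for $q$ odd and refer to \cite[Proposition 2.5.11]{KL} for $q$ even. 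Part (3) is unaffected, since there $q$ is assumed odd.
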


\begin{proof}
This follows from \cite[Propositions 2.5.11(ii) and 2.5.13]{KL}.\end{proof}

\begin{corol}\label{oo7} 
$(1)$ Let $G=\Omega^-_{2n}(q) $, n odd.  
If $q\neq 2,3,5,11$ then $e(G)>3$; if $q\neq 2,3,5,7,11,23$ then $e(G/Z(G))>3$.  

$(2)$ Let $G=\Omega^+_{2n}(q)$, n odd. If $q\neq 2,3,4,5,7,13$. 
Then $e(G)>3$;  if $q\neq 2,3,4,5,7,9,13,25$ then $e(G/Z(G))>3$.\end{corol}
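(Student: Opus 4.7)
The plan is to reduce both parts of the corollary to Theorem~\ref{th2} by exhibiting, inside each group in question, an element conjugate over an appropriate field to $g=\diag(\mu\cdot\Id_n,\mu^{-1}\cdot\Id_n)$, and then reading off the exceptional $q$ as those for which no suitable $\mu$ exists.

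For part~$(2)$, I would use Lemma~\ref{dd2}(1) to decompose $V$ as an orthogonal sum $V_1\perp\cdots\perp V_n$ of $n$ hyperbolic planes, yielding a diagonal embedding $\Omega^+_2(q)\hookrightarrow G$. Choosing $h=\diag(\mu,\mu^{-1})\in\Omega^+_2(q)$ and regrouping the basis by eigenvalue produces $g\in G\subseteq\SO^+_{2n}(q)$ of the required form, so Theorem~\ref{th2} gives $e_{\SO^+_{2n}(q)}(g)>3$, hence $e_G(g)>3$. Since $\Omega^+_2(q)$ is cyclic of order $d^+:=(q-1)/\gcd(2,q-1)$, a suitable $\mu$ with $\mu^6\neq 1$ (respectively $\mu^{12}\neq 1$, needed for the quotient version) exists precisely when $d^+\nmid 6$ (respectively $d^+\nmid 12$), and direct enumeration gives the stated excluded lists $\{2,3,4,5,7,13\}$ and $\{2,3,4,5,7,9,13,25\}$.

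For part~$(1)$, I would use Lemma~\ref{dd2}(2) to write $V=V_1\perp\cdots\perp V_n$ with each $V_i$ of dimension~$2$ and type~$-$, and embed $\Omega^-_2(q)$ diagonally into $G$. Take $h\in\Omega^-_2(q)$ with eigenvalue $\mu\in\mathbb{F}_{q^2}^\times$ (so $\mu^{q+1}=1$) and set $g=\diag(h,\ldots,h)\in G$. After extending scalars to $\mathbb{F}_{q^2}$ the form on $V\otimes_{\mathbb{F}_q}\mathbb{F}_{q^2}$ becomes of $+$~type, and a basis adapted to the $\mu$- and $\mu^{-1}$-eigenspaces of $g$ exhibits $g$ as $\diag(\mu\cdot\Id_n,\mu^{-1}\cdot\Id_n)$ relative to the standard Witt Gram matrix $J=\begin{pmatrix}0&\Id_n\\ \Id_n&0\end{pmatrix}$. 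Since $\Omega^-_{2n}(q)$ is perfect for $n\geq 3$, this embedding lands inside $\tilde G:=\SO^+_{2n}(q^2)$ when $q$ is odd and $\tilde G:=\Omega^+_{2n}(q^2)$ when $q$ is even. Theorem~\ref{th2} then yields $e_{\tilde G}(g)>3$, and because any three $G$-conjugates of $g$ are in particular three $\tilde G$-conjugates, $e_G(g)\geq e_{\tilde G}(g)>3$; for the quotient version the inclusion $Z(G)=\{\pm\Id\}\cap G\subseteq Z(\tilde G)$ transfers the obstruction. The order $d^-:=(q+1)/\gcd(2,q-1)$ of $\Omega^-_2(q)$ governs when a suitable $\mu$ exists, giving the exclusions $\{2,3,5,11\}$ (for $\mu^6\neq 1$) and $\{2,3,5,7,11,23\}$ (for $\mu^{12}\neq 1$).

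The only nontrivial steps beyond the arithmetic of the excluded $q$ are the transfer from $\tilde G$ back to $G$ in part~$(1)$ and the matching of centres; both are routine, the former because conjugates in a subgroup are conjugates in the ambient group, and the latter because $Z$ on either side sits inside $\{\pm\Id\}$. I anticipate no serious obstacle.
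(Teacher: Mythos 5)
Your proposal is correct and follows essentially the same route as the paper: decompose $V$ into $n$ nondegenerate two-dimensional pieces via Lemma \ref{dd2}, form the block-diagonal element with exactly two eigenvalues $\mu^{\pm1}$ each of multiplicity $n$ (the paper obtains the same element as the square of $\diag(x,\ldots,x)$ with $x$ a generator of $\SO_2^{\pm}(q)$, using the spinor norm to see why the square is needed for $q$ odd), and apply Theorem \ref{th2} over an overfield where the form splits, with the excluded values of $q$ coming from the conditions $\mu^6\neq 1$ and $\mu^{12}\neq 1$. The only cosmetic differences are that you work directly inside the cyclic group $\Omega_2^{\pm}(q)$ and pass to $\mathbb{F}_{q^2}$ rather than $\overline{\mathbb{F}}_q$ in the minus-type case.
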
 

\begin{proof} Let $V$ be the underlying orthogonal space for $G$. Set $H=\SO(V)$.

(1) By Lemma \ref{dd2}(1), $V$ is a direct  sum of 2-dimensional non-degenerate subspaces of type $-$ orthogonal to each other. Therefore, the stabilizer of this decomposition in $ H$ contains the direct product of  $n$ copies of $\SO_2^-(q)$. As  $\SO_2^-(q)$ has an element $x$, say, of order $q+1$ with eigenvalues $\nu,\nu^{-1}$ over $\overline{{\mathbb F}}_q$, where $\nu$ is a primitive $(q+1)$-th root of unity, we can construct $h\in H:=O(V)$ as $\diag(x,\ldots, x)$. Note that $h$ is diagonalizable over an algebraic closure of $\mathbb{F}_q$ with eigenvalues $\nu,\nu^{-1}$ of multiplicity $n$. 

If $W\subset V$ is a non-degenerate subspace then the subgroup $Y_W$ of elements $y\in O(V)$ such that $yW=W $, $yW^\perp=W^\perp$ and $y$ acts  trivially on $W^\perp$ is isomorphic to $O(W)$ and $Y_W\cap \Omega(V)\cong \Omega (W) $.  Applying this observation to every 2-dimensional 
space of dimension 2 and of $-$ type from the above decomposition of $V$, we observe (as $O(V)/\Omega(V)$ is of exponent 2 and $n$ is odd that $h\in \Omega(V)$ if and only if $x\in \Omega_2(q)$. If $q$ is even then $x\in \Omega^-_2(q)$ and hence $h\in \Omega(V)$. If $q$ is odd then $x\notin \Omega^-_2(q)$, $x^2\in \Omega^-_2(q)$, so $h\notin \Omega(V)$, $h^2\in \Omega(V)$.

 We wish to use
  Theorem \ref{th2}; note that if $e(g)>3$ over an algebraic closurce  of $\mathbb{F}_q$, then $e(g)>3$ in $G$ as well. If $q\neq 2,3,5,11$
then $(\nu^2)^6\neq 1$, so $e(g)\geq 3$ by  Theorem \ref{th2}. 
  If $Z(G)=1$ then this holds for $P\Omega^-_{2n}(q)$.  Suppose that $Z(G)\neq 1$. Then $4|(q+1)$ by Lemma \ref{dd2}. To use Theorem \ref{th2}, we now need  $(\nu^2)^6\neq -1$, or $\nu^{24}\neq 1$, equivalently $q \not\in \{3,5,7,11,23\}$. So $e(G/Z(G)) > 3$ in this case. If  $q$ is even then $|x|$ is odd and hence $a=1$, $h\in G$, and we set $g=h$. Then $\nu^{12} \neq 1$ if and only if  $q\neq 2$.

(2) In this case $V$ is a direct sum of 2-dimensional non-degenerate spaces of type $+$
orthogonal to each other.
Therefore, the stabilizer of this decomposition in $\SO^+_{2n}(q)$ contains the direct product of  $n$ copies of $\SO_2^+(q)$. Now $\SO_2^+(q)$ has an element $x$ of order  $q-1$.    As above, 
$h=\diag(x,\ldots, x)\in G $   for $q$ even. If $q$ is odd  then $h\notin G$, and   $g=h^2\in G$. If $\nu^{12} \neq 1$, equivalently $q \notin \{3,5,7,13\}$, then, as above, $e(G)> 3$ by  Theorem \ref{th2}.  If $Z(G)\neq 1$ then $4|(q-1)$ and we need 
$\nu^{24}\neq 1$,  equivalently $q \not\in \{3,5,7,9,13,25\}$. So $e(G/Z(G)) > 3$ in this case. If $q$ is even then $|x|$ is odd and hence $a=1$, $h\in G$, and we set $g=h$. Now $\nu^{12} \neq 1$ if and only if $q\notin\{2,4\}$. So again the result follows from Theorem \ref{th2}.
\end{proof}

\subsection{Groups $\PSL_n(q)$ }  

\begin{lemma}\label{lemmaPSL}
Let $n \geq 2$ be an integer, $F$ a field, $H=\SL_n(F)$, $G=\PSL_n(F)$. Let $\mu \in F^{\times}$ and let $g=\diag(\mu^{1-n},\mu,\ldots,\mu) \in H$. Assume there exists $l \in \{1,\ldots,n-1\}$ such that $g^{x_1} \ldots g^{x_l} \in Z(H)$ 
for some $x_1,\ldots,x_l \in \GL_n(F)$. Then $\mu^{ln}=1$. In particular, if $F$ is a finite field of  $q$ elements and $d=(n,q-1)$ 
 then $e(G) \geq e(g)\geq (q-1)/d$.
\end{lemma}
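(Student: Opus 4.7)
The plan is to exploit the very special structure of $g$: it acts as the scalar $\mu$ on the codimension-$1$ subspace $W = \langle e_2, \ldots, e_n \rangle$. Set $g' := \mu^{-1} g = \diag(\mu^{-n}, 1, \ldots, 1)$, so that $g'$ fixes $W$ pointwise and acts as $\mu^{-n}$ on a $1$-dimensional complement. Every conjugate $g'^{x_i}$ then fixes the hyperplane $x_i^{-1} W$ pointwise.

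First I would extract the easy constraint by taking determinants. Since $\det g = \mu^{1-n} \cdot \mu^{n-1} = 1$, each conjugate $g^{x_i}$ also has determinant $1$, so the hypothesis $g^{x_1} \cdots g^{x_l} = \lambda I$ immediately gives $\lambda^n = 1$.

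The heart of the proof, and the one genuinely clever step, is a dimension count. The intersection of the $l$ hyperplanes $x_i^{-1} W$ in $F^n$ has dimension at least $n - l \geq 1$, so there is a nonzero vector $v$ with $g'^{x_i} v = v$ for all $i$. Applying the product to $v$,
\[
v = g'^{x_1} \cdots g'^{x_l} v = \mu^{-l}\bigl(g^{x_1} \cdots g^{x_l}\bigr) v = \mu^{-l} \lambda \cdot v,
\]
so $\lambda = \mu^l$. Combined with $\lambda^n = 1$, this yields $\mu^{ln} = 1$, which is the main claim.

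For the ``in particular'' statement, I would take $F = \mathbb{F}_q$ and let $\mu$ be a generator of $F^\times$, so that $|\mu| = q - 1$. If $q - 1 \mid n$ the asserted bound $(q-1)/d$ equals $1$ and there is nothing to prove, so assume otherwise; then $\mu^n \neq 1$, so $g$ is non-scalar and its image $\bar g \in G = \PSL_n(F)$ is nontrivial. Suppose $e(G) < n$; then $l := e(\bar g) \leq e(G) < n$, and there exist $x_1, \ldots, x_l \in \GL_n(F)$ with $g^{x_1} \cdots g^{x_l} = \lambda I \in Z(H)$. By the first part $\mu^{ln} = 1$, i.e.\ $(q-1) \mid ln$; dividing by $d = (n, q-1)$ and using that $(q-1)/d$ and $n/d$ are coprime, one gets $(q-1)/d \mid l$, and hence $e(G) \geq l \geq (q-1)/d$. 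The whole argument is short; the one thing to notice is the normalization $g \mapsto \mu^{-1} g$ that converts $g$ into a pseudo-reflection, after which everything reduces to elementary linear algebra.
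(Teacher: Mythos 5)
Your proof is correct and follows essentially the same route as the paper: the fixed hyperplane of $\mu^{-1}g^{x_i}$ is exactly the $\mu$-eigenspace $W_i$ of $g^{x_i}$ used there, and both arguments intersect the $l$ hyperplanes to get a common eigenvector, deduce $\lambda=\mu^{l}$, and combine this with $\lambda^n=\det(\lambda\cdot\Id)=1$. The normalization to a pseudo-reflection and the explicit coprimality step $(q-1)/d \mid l$ are only cosmetic refinements of the paper's argument.
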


\begin{proof}
The last claim follows by choosing $\mu$ of multiplicative order $q-1$, since then $\mu^{ln}=1$ implies that $q-1$ divides $ln$, so $(q-1)/d$ divides $l$. So we prove the first claim, i.e. that $\mu^{ln}=1$. This is trivial if  $\mu^n=1$. Suppose that $\mu^n \neq 1$. Then $\mu^{1-n} \neq \mu$ so the matrix $g$ is not scalar, and the $\mu$-eigenspace of $g$ 
(and any its conjugate) is of dimension $n-1$. Suppose that  $g^{x_1} \ldots g^{x_l} = \lambda \cdot \Id \in H$ for some  $x_1,\ldots ,x_l \in H$. Let $W_i$  be the  $\mu$-eigenspace of $g^{x_i}$ for $i=1,\ldots ,l$ and let $W:=\bigcap_{i=1}^l W_i$. 
As $\dim W_i=n-1$ for all $i=1,\ldots,l$, we have  $\dim W \geq n-l$. Moreover $g^{x_1} \ldots g^{x_l}$ acts as the multiplication by $\mu^l$ on $W$, so $\mu^l=\lambda$ and hence $\mu^{ln} = \lambda^n=\det(\lambda\cdot \Id)=1$.
\end{proof}

If $d=q-1$, that is, $n$ is a multiple of $q-1$, then Lemma \ref{lemmaPSL}
reduces to the trivial claim that $e(G)>1$. So we can try to improve the result for this and related cases. For $q=2$ we have:

\begin{theor}\label{gg6}
Let $G=\GL_n(2)$, $n>1$.  Then $e(G)\leq 6$ and $e(s)\leq 3$ for $s\in G$ semisimple.  
\end{theor}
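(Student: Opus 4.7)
The plan is to handle the two inequalities in opposite order: first prove the semisimple bound $e(s)\le 3$, then derive $e(G)\le 6$ for a general element $g$ by peeling off the unipotent part.

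For the reduction from $e(G)\le 6$ to the semisimple bound, write any $g\in G$ as $g=su=us$ with $s$ semisimple and $u$ unipotent. By Observation~\ref{centsem}, $C_G(s)$ is a direct product of general linear groups $\GL_{l_i}(2^{m_i})$, and $u$ projects to a unipotent element in each factor. Since any unipotent matrix is conjugate to its Jordan canonical form and a Jordan block is conjugate to its inverse, every unipotent element in a general linear group is real; hence $u$ is real in $C_G(s)$, giving $m:=e_{C_G(s)}(u)\le 2$. Granting the semisimple bound, $s^2$ is semisimple and $e_G(s^2)\le 3$, so Lemma~\ref{xy8} yields $e_G(g)\le 2\cdot 3=6$.

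To prove $e(s)\le 3$ for semisimple $s\in G$, let $V=\mathbb{F}_2^n$ and decompose $V=V_1\oplus\cdots\oplus V_k$ into the homogeneous $\mathbb{F}_2\langle s\rangle$-components. The stabilizer $\prod_i\GL(V_i)$ embeds block-diagonally into $G$, and the corresponding block decomposition $s=s_1\cdots s_k$ places each $s_i$ inside $\GL(V_i)$. A triple of $\GL(V_i)$-conjugates of $s_i$ with product $\Id$, assembled block-diagonally, yields three $G$-conjugates of $s$ with product $\Id_n$. Thus it suffices to treat the case when $V$ is a single homogeneous component, i.e.\ $V\cong U^l$ as $\mathbb{F}_2\langle s\rangle$-module for some irreducible $U$ of dimension $m$, with $n=ml$.

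In this homogeneous case, a suitable basis of $V$ exhibits $s$ as $\diag(T,T,\ldots,T)$ for some irreducible element $T\in\GL_m(2)$; being irreducible, $T$ is regular semisimple. The plan is to find three $\GL_m(2)$-conjugates $T_1,T_2,T_3$ of $T$ with $T_1T_2T_3=1$, then lift to $\diag(T_j,\ldots,T_j)\in G$, which is $G$-conjugate to $s$ via the block-diagonal embedding of the conjugating element, and whose product is $\Id_n$. For $m=1$ the element $s$ is the identity and there is nothing to prove; for $m=2$, the group $\GL_2(2)\cong S_3$ and $T$ has order $3$, so $T\cdot T\cdot T=1$ does the job; for $m\ge 3$, the group $\GL_m(2)=\PSL_m(2)$ is simple, and Gow's Lemma~\ref{Gow} applied to the regular semisimple element $T$ produces the required three conjugates.

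I do not anticipate a genuine obstacle: the decisive structural observation is that every semisimple $s\in\GL_n(2)$ breaks into blocks, each of which is a diagonal of copies of a single irreducible matrix, so Gow's theorem applied to that irreducible block takes care of everything. The only small care needed is separating the tiny cases $m\le 2$ (where the simple-group hypothesis of Lemma~\ref{Gow} fails) and verifying them by hand.
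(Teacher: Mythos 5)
Your proposal is correct and follows essentially the same route as the paper: reduce the semisimple case to irreducible blocks (the paper decomposes directly into irreducible $\mathbb{F}_2\langle s\rangle$-summands rather than passing through homogeneous components, but this is immaterial), apply Gow's Lemma~\ref{Gow} to the regular irreducible block for dimension at least $3$ and check dimensions $1,2$ by hand, and then handle a general $g=su$ by noting $u$ is real in $C_G(s)\cong\prod_j\GL_{n_j}(2^{k_j})$ and invoking Lemma~\ref{xy8} with $e_G(s^2)\le 3$. The only cosmetic difference is that you justify the reality of $u$ in each $\GL$-factor by the elementary Jordan-form argument where the paper cites \cite[Theorem 1.8(ii)]{TZ04}.
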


\begin{proof} 
Write $G=\GL(V)$. Suppose that $s\in G$ is semisimple. Then $V=\bigoplus_{i=1}^k V_i$, where $V_1,\ldots, V_k$
are irreducible $\mathbb{F}_2\langle s\rangle $-modules. Therefore, $s$ belongs to 
$$S = \{x \in G\ |\ xV_i=V_i\ \mbox{for}\ i=1,\ldots,k\} \cong \GL(V_1)\times \cdots\times \GL(V_k).$$ 
Let $s_i$ be the projection of $s$ into $\GL(V_i)$ for $i=1,\ldots,k$. 
Then $|C_{\GL(V_i)}(s_i)|$ is odd (by Schur's lemma), and hence $s_i$ is regular in $\GL(V_i)$. 
 If $\dim V_i>2$  then the group  $\GL(V_i)$ is simple, and then,  by  Lemma \ref{Gow}, there are $x_i,y_i\in \GL(V_i)$ such that $s_i x_is_ix_i^{-1} y_is_iy_i^{-1}=\Id$. This is also true if 
 $\dim V_i \leq 2$.  (This is trivial if $\dim V_i=1$.If $\dim V_i=2$ then $\GL(V_i)\cong S_3$, so $|s_i|\in 1,3$, and hence $s_i^3=1$.   
It follows that   $sxsx^{-1} ysy^{-1} =\Id$ for some $x,y\in S$. 

It is well known that $C_G(s)$ is a direct product of groups $\GL_{n_j}(2^{k_j})$ for various
$n_j,k_j$, see \cite[Lemma 3.2]{TZ04}. Let $g=su$, where $s\in G$ is semisimple,  $u\in G$ is unipotent and $us=su$. Then $u=\Pi_ju_j$, where  $  u_j$ is a unipotent element in $ \GL_{n_j}(2^{k_j})$. By \cite[Theorem 1.8(ii)]{TZ04}, $u_j$ is real in $\GL_{n_i}(2^j)$. Therefore,  $u$ is real in  $C_G(s)$. 
Now we use Lemma  \ref{xy8} with $x=s$, $y=u$ and $m=e_{C_G(s)}(u)=2$. As $s^2$ is semisimple, we have $e_G(s^2)\leq 3$, and hence $e_G(su)\leq 6$.
\end{proof}

\begin{lemma}\label{s25} $e(\GL_n(2))=3$ for $n=3,4,5$. 
\end{lemma}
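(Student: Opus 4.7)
The plan is to combine the known exceptional isomorphisms for $\PSL_n(2)$ with a short character-theoretic computation for the one residual case. First, for the lower bound, I observe that $\GL_n(2)=\SL_n(2)=\PSL_n(2)$ is a non-abelian simple group for each $n=3,4,5$, and none of these three groups appears in the list of Theorem \ref{un4}. Hence each contains at least one non-real element, and $e(\GL_n(2))\geq 3$ in every case.

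For the upper bound when $n=3$, I would use the classical isomorphism $\GL_3(2)\cong \PSL_2(7)$; since $q=7$ satisfies $q>3$ and $q\equiv 3\pmod 4$, Theorem \ref{tt2}(3) gives $e(\GL_3(2))=3$ at once. For $n=4$ I would use $\GL_4(2)\cong A_8$; since $8>4$ and $8\notin\{5,6,10,14\}$, Theorem \ref{tt2}(1) gives $e(\GL_4(2))=3$.

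The only genuinely new case is $n=5$, where no low-rank exceptional isomorphism is available, and this is the main obstacle. My plan is to verify $e(\PSL_5(2))\leq 3$ directly using the character-theoretic criterion of Lemma \ref{eq} at $k=3$: for each non-trivial conjugacy class $C$ of $\PSL_5(2)$ with representative $g$, it suffices to confirm that
$$\sum_{\chi\in\mathrm{Irr}(G)} \frac{\chi(g)^3}{\chi(1)}\neq 0.$$
Since $|\PSL_5(2)|=9{,}999{,}360$ has only finitely many (in fact a modest number of) conjugacy classes, and the full character table is available in the GAP character table library \cite{gap}, this reduces to a routine finite computation. Combined with the lower bound established above, this yields $e(\GL_5(2))=3$ and completes the lemma.
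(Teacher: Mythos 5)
Your proposal is correct in outline, and for $n=3,4$ it coincides with the paper's proof (the isomorphisms $\GL_3(2)\cong\PSL_2(7)$ and $\GL_4(2)\cong A_8$). For $n=5$ you take a genuinely different route. The paper does not run the criterion of Lemma \ref{eq} over all classes: it first makes a structural reduction, writing $g=su$ with $s$ of odd order, disposing of the unipotent case, invoking Lemma \ref{Gow} when $s$ is regular, and analysing the homogeneous components of $s$ on $V$ (which forces $\dim V_1=3$, $\dim V_2=2$ when $s$ is reducible and non-scalar) until only a single conjugacy class survives, namely elements of order $14$ with $|s_1|=7$ and $|u_2|=2$. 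Only for that one class does the paper evaluate the sum $\sum_\chi \chi(g)^3/\chi(1)$, and it does so by hand from the ATLAS table (the four characters of degree $496$ contribute $0$ because their values are $\pm b_7,\pm\overline{b_7}$). Your plan instead delegates the entire case to a finite GAP computation over all classes. Both are legitimate: yours is shorter to state and matches how the paper itself handles several other small groups ($\PSU_4(2)$, $\PSL_4(3)$, $\PSp_4(3)$, \dots); the paper's version is checkable by hand and explains \emph{why} only one class is problematic. One small caveat you should add: the test ``$\sum_\chi \chi(g)^3/\chi(1)\neq 0$'' certifies $1\in C^3$, which is sufficient but not necessary for $e(C)\leq 3$, since a real class could in principle satisfy $1\in C^2$ while $1\notin C^3$. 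So either record that the computation does return a nonzero value for every nontrivial class of $\PSL_5(2)$, or supplement the $k=3$ check by a reality check (equivalently the $k=2$ criterion) for any class where the $k=3$ sum vanishes.
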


\begin{proof} Observe that $e(\GL_n(2))=3$ for $n=3,4$ as $\GL_3(2)\cong \PSL_2(7)$ and $\GL_4(2)=A_8$. Let $n=5$.

Let $g\in G$ and let $g=su$, where $s\in \langle g\rangle$  is of odd order, $u$ is a 2-power.   If 
$s=1$ then $e(u)=2$. Suppose that $s\neq 1 $. In view of Lemma \ref{Gow} we can assume that $s$ is not regular. Then $s$ is reducible by Schur's lemma. Let $V_1,\ldots,V_k$ be the homogeneous components of $s$ on $V$, and we can assume that $\dim V_1\geq \dim V_2\geq \ldots \geq \dim V_k$. If $\dim V_k=1$ then $g\in \diag(\GL_4(2),1)$ and the result follows since we know that $e(\GL_4(2))=3$. So we  assume that $\dim V_k\geq 2$, and hence $k=2$, $\dim V_1=3$, $\dim V_2=2 $. Note that the homogeneous components of $s$ are $g$-stable. So $gV_i=V_i$ for $i=1,2$. 
Then $g\in  X_1,X_2$, $X_1\cong  \GL_3(2)$, $X_2\cong \GL_2(2)$. We can write  $g=\diag(g_1,g_2)$, and let $g_i=s_iu_i$ for $i=1,2$. Then $s_i$ is either irreducible on $V_i$ or $s_i=\Id$. If both $s_1,s_2$ are irreducible then $s$ is regular by \cite{at}.
So $s\neq 1,u\neq 1$, and either $g_1=s_1$, $g_2=u_2$ or  $g_1=u_1$, $g_2=s_2$.
In the latter case each $u_1$, $s_2$ is real, so $g$ is real. In the former case $|s_1|=7$, $|u_2|=2$, so $|g|=14$.

For this $g$ we can use the character table \cite[Page 70]{at}. Note that the 4 characters 
of degrees 496 take value $b_7,\overline{b_7},-b_7,-\overline{b_7}$ so their cubes sum to 0.  We have
$$\sum_{\chi\in{\rm Irr}(G)}\frac{\chi(g)^3}{\chi(1)}=1+\frac{-1}{155}+ \frac{1}{930}+\frac{1}{960} + \frac{-1}{1240}>0.$$
This proves the claim.
\end{proof}

\begin{lemma}\label{id4}
If  $g \in \GL_n(3)$ is indecomposable, then there exist $x,y \in \SL_n(3)$ such that $g \cdot g^x \cdot g^y$ is scalar.
\end{lemma}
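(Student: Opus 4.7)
The plan is to apply Lemma~\ref{in3} to obtain a Kronecker decomposition $g = A \otimes J_l$ (up to conjugacy in $\GL_n(3)$), where $A \in \GL_k(3)$ is irreducible (and therefore regular semisimple in $\GL_k(3)$), $J_l$ is the Jordan block of size $l$ with eigenvalue $1$, and $kl = n$. Since the remark following Lemma~\ref{in3} gives $\SL_k(3)\otimes\SL_l(3)\subseteq\SL_n(3)$, I would seek $x,y\in\SL_n(3)$ of Kronecker form $x=P\otimes Q$, $y=R\otimes S$ with $P,R\in\SL_k(3)$ and $Q,S\in\SL_l(3)$, so that
\[
g\cdot g^x\cdot g^y \;=\; (A\cdot A^P\cdot A^R)\otimes (J_l\cdot J_l^Q\cdot J_l^S).
\]
A Kronecker product of invertible matrices is scalar in $\GL_n(3)$ if and only if each tensor factor is scalar in its own $\GL$, so the problem splits into (i) producing $P,R\in\SL_k(3)$ with $A\cdot A^P\cdot A^R\in Z(\GL_k(3))$, and (ii) producing $Q,S\in\SL_l(3)$ with $J_l\cdot J_l^Q\cdot J_l^S\in Z(\GL_l(3))$.

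For (i): when $k=1$ the factor is already scalar and $P=R=I$ suffice. For $k\geq 3$ I would invoke Corollary~\ref{GuT} with $h=A$; its hypothesis $A^{-1}\in A^2\SL_k(3)Z(\GL_k(3))$ reduces to the solvability of $z^k=\det(A)^{-3}$ in $\mathbb{F}_3^\times$, which is automatic when $k$ is odd and holds when $k$ is even provided $\det A=1$. The residual case $k=2$ (excluded from Lemma~\ref{GuT1} at $q=3$) would be treated by direct computation in $\GL_2(3)$: an irreducible element there has order $4$ or $8$, so $A^2=-I$ (if $|A|=4$) or $A^4=-I$ (if $|A|=8$), and suitable $P,R\in\SL_2(3)$ can be exhibited explicitly, in the spirit of the final paragraph of the proof of Proposition~\ref{s4q}.

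For (ii): when $l\leq 3$, the identity $J_l^3=I_l$ in $\GL_l(3)$ immediately yields $Q=S=I$. For $l\geq 4$ the cube is no longer central, so I would use that $J_l$ is conjugate to $J_l^{-1}$ in $\GL_l(3)$ via an antidiagonal element, together with a centralizer–determinant calculation (the image of $\det\colon C_{\GL_l(3)}(J_l)\to\mathbb{F}_3^\times$ equals $\mathbb{F}_3^\times$ when $l$ is odd and $\{1\}$ when $l$ is even) that controls the splitting of the $\GL_l(3)$-class of $J_l$ into $\SL_l(3)$-classes; a case analysis in $l$ then produces $Q,S\in\SL_l(3)$ realizing $J_l\cdot J_l^Q\cdot J_l^S\in Z(\GL_l(3))$. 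The main obstacle I anticipate is the mutual compatibility of (i) and (ii): when $k$ is even with $\det A=-1$, the identity $\det(A\cdot A^P\cdot A^R)=(\det A)^3=-1$ conflicts with $\det(\pm I_k)=1$, so no tensor-factored pair $x,y$ can make each tensor factor individually scalar. In this case, and in the symmetric combinations where $k\in\{1,2\}$ coexists with $l\geq 4$, one must step outside $\SL_k(3)\otimes\SL_l(3)$ and seek $x,y$ in the larger normalizer of $M_1 M_2$ inside $\SL_n(3)$, exploiting the Galois-theoretic action on the local ring $\mathbb{F}_3[g]\cong\mathbb{F}_3[x]/(p_A(x)^l)$ to intertwine the semisimple and unipotent conjugations.
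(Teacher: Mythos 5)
Your overall route is the paper's own: decompose $g$ via Lemma \ref{in3} as (a conjugate of) $A\otimes J_l$ with $A\in\GL_k(3)$ irreducible, treat the semisimple tensor factor with Corollary \ref{GuT}, treat the Jordan block by a reality/rationality argument inside $\SL_l(3)$, and recombine using $\SL_k(3)\otimes\SL_l(3)\subseteq\SL_n(3)$. Your handling of the unipotent factor ($J_l^3=\Id$ for $l\le 3$; conjugacy of $J_l$ to $J_l^{-2}$ by an element of $\SL_l(3)$ for larger $l$) is in substance what the paper does via Lemma \ref{u26} and \cite[Lemma 2.2(ii)]{TZ05}, and your verification of the hypothesis of Corollary \ref{GuT} is actually more explicit than the paper's, which invokes that corollary without checking that $s^{-1}\in s^2\SL_k(3)Z(\GL_k(3))$.

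The genuine gap is the case you flag at the end and never close: $k$ even with $\det A=-1$ (which does occur, e.g.\ $A$ of order $8$ in $\GL_2(3)$). Your proposed escape --- taking $x,y$ outside $\SL_k(3)\otimes\SL_l(3)$ and ``intertwining'' the two factors Galois-theoretically --- is a plan, not an argument, and in part of this regime no argument can exist: if moreover $l$ is odd, then $n=kl$ is even and $\det g=(\det A)^l(\det J_l)^k=-1$, so $\det(g\cdot g^x\cdot g^y)=(\det g)^3=-1$ for any $x,y\in\SL_n(3)$, whereas both scalars $\pm\Id_n$ of $\GL_n(3)$ have determinant $1$ when $n$ is even. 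Hence the conclusion is unattainable for such $g$, and the statement requires either a hypothesis excluding this case or a weaker conclusion (note that the downstream application in Lemma \ref{33s} only needs the product of three conjugates to have order at most $2$, not to be scalar). The remaining sub-case ($k$ even, $\det A=-1$, $l$ even) is not determinant-obstructed but is left genuinely open by your sketch. The $k=2$ and $l\ge 4$ computations you defer are routine and consistent with the paper's treatment, but as long as the even-$k$, determinant-$(-1)$ case stands unresolved, the proposal does not prove the lemma as stated.
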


\begin{proof} Let $g=su$ be the Jordan decomposition of $g$.
In the notation of Lemma \ref{in3}, $s\in M_1$, $u\in M_2$, and $s$ is irreducible in $M_1\cong \GL_k(3)$.  Let $M_3\subset M_1$, $M_4\subset M_2$ be subgroups isomorphic to $\SL_k(3), \SL_l(3)$, respectively. Since the ground field has $3$ elements, $ \langle s,M_3 \rangle$ is either $M_1$ or $M_3$. 
By Corollary \ref{GuT}, a product of $3$ conjugates of $s$ in $\langle s,M_3\rangle$ is scalar and the conjugation elements can be chosen in $M_3$. In addition, if $u$ is not real in $M_4$ then a product of 3 conjugates of $u$ in $M_4$ equals 1 by Lemma \ref{u26}. If $u$ is real 
then  $u$ is rational in $M_4$  by \cite[Lemma 2.2(ii)]{TZ05}. 
 Then $u$ is conjugate to $u^{-2}$,  so some product of $3$ conjugates of $u$ in $M_4$ equals 1. It follows that a product of $3$ conjugates of $g$ is scalar and the conjugation elements can be taken in $\SL_n(3)$.
\end{proof}

\begin{lemma}\label{33s}
$e(\SL_n(3)) \leq 6$.
\end{lemma}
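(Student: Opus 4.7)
The plan is to combine Lemma \ref{id4} with the Krull--Schmidt decomposition of the natural module. Let $V=\mathbb{F}_3^n$, viewed as an $\mathbb{F}_3\langle g\rangle$-module, and write $V=V_1\oplus\cdots\oplus V_t$ as a direct sum of indecomposable $\mathbb{F}_3\langle g\rangle$-submodules. In an adapted basis $g=\diag(g_1,\ldots,g_t)$ with each $g_i\in\GL(V_i)$ indecomposable. Applying Lemma \ref{id4} block by block yields $x_i,y_i\in\SL(V_i)$ and scalars $\lambda_i\in\mathbb{F}_3^\times$ with
$$g_i\cdot g_i^{x_i}\cdot g_i^{y_i}=\lambda_i\cdot\Id_{V_i}.$$
(For blocks with $\dim V_i=1$ there is nothing to do: take $x_i=y_i=1$ and $\lambda_i=g_i^3=g_i\in\{\pm1\}$.)

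Next, set $x:=\diag(x_1,\ldots,x_t)$ and $y:=\diag(y_1,\ldots,y_t)$. Since each $x_i,y_i$ lies in $\SL(V_i)$, the block-diagonal matrices $x,y$ lie in $\SL(V)=\SL_n(3)$, and the block structure gives
$$h\;:=\;g\cdot g^x\cdot g^y\;=\;\diag(\lambda_1\cdot\Id_{V_1},\ldots,\lambda_t\cdot\Id_{V_t}).$$

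The crucial point is that $\mathbb{F}_3^\times=\{1,-1\}$, so every $\lambda_i$ satisfies $\lambda_i^2=1$, hence $h^2=\Id$. Consequently
$$g\cdot g^x\cdot g^y\cdot g\cdot g^x\cdot g^y\;=\;h\cdot h\;=\;\Id,$$
which is a product of six conjugates of $g$ in $\SL_n(3)$ equal to the identity. This gives $e_G(g)\leq 6$ for every $g\in G=\SL_n(3)$, whence $e(\SL_n(3))\leq 6$.

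There is no real obstacle beyond Lemma \ref{id4} itself; the point is simply that the scalar output of that lemma is automatically an involution (or identity), a quirk forced by $|\mathbb{F}_3^\times|=2$, and squaring the three-term relation then closes the loop. The only thing to verify carefully is that the block-diagonal assembly of the $x_i,y_i$ keeps the conjugating elements inside $\SL_n(3)$, which is immediate since the determinant of a block-diagonal matrix is the product of the block determinants.
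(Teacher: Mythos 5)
Your proof is correct and follows essentially the same route as the paper's: decompose $V$ into indecomposable $\mathbb{F}_3\langle g\rangle$-summands, apply Lemma \ref{id4} to each block, observe that the resulting block-scalar matrix has entries in $\mathbb{F}_3^\times=\{\pm 1\}$ and hence squares to the identity, giving a product of six conjugates equal to $1$. You have merely spelled out the block-diagonal assembly of the conjugating elements more explicitly than the paper does.
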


\begin{proof} Let $h\in H=\SL_n(3)=\SL(V)$, and $V=\bigoplus_{i=1}^t V_i$, where $V_i$ are indecomposable ${\mathbb F}_3\langle h\rangle$-modules, and let $h_i$ be the projection of $h$ in $\GL(V_i)$ for all $i$. By Lemma \ref{id4}, the product of some three $\SL(V_i)$-conjugates of $h_i$ is of order $1$ or $ 2$ in $\SL(V_i)$ for $i=1,\ldots,k$;  hence a product of $3$ conjugates of $h$ in $H$ is of order 2. Then $e(h)\leq 6$.
\end{proof}

\begin{lemma}\label{s53} Let $G=\PSL_5(3)$. Then $e(G)\leq 5$. In addition, $e(\PSL_3(3))=e(\PSL_4(3))=3$.\end{lemma}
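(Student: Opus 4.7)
The cases of $\PSL_3(3)$ and $\PSL_4(3)$ can be disposed of quickly. For $\PSL_3(3)$, Lemma~\ref{ma3} yields $e\leq 3$; since $\PSL_3(3)$ contains non-real elements, $e=3$. For $\PSL_4(3)$, a direct GAP \cite{gap} computation establishes $e=3$.

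For $e(\PSL_5(3))\leq 5$, I would first note that $(5,q-1)=1$ implies $Z(\SL_5(3))=1$ and $\PSL_5(3)=\SL_5(3)=:G$. Given $g\in G$, write $g=su$ with commuting semisimple and unipotent parts $s,u\in\langle g\rangle$. If $u=1$, Lemma~\ref{un2} (applicable since $(n,q)=(5,3)=1$) yields $e(g)\leq 5$. If $s=1$, Lemma~\ref{u26} gives $e(g)\leq 3$. The nontrivial case is $s,u\neq 1$.

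Assuming $s,u\neq 1$, decompose $V=\mathbb{F}_3^5$ into indecomposable $\langle g\rangle$-summands $V_1\oplus\cdots\oplus V_r$ with $g_i:=g|_{V_i}$. Applying Lemma~\ref{id4} to each $g_i$ furnishes $x_i,y_i\in\SL(V_i)$ such that $g_i\cdot g_i^{x_i}\cdot g_i^{y_i}=\lambda_i\Id_{V_i}$ with $\lambda_i\in\mathbb{F}_3^\times=\{\pm 1\}$. Letting $x=\diag(x_1,\ldots,x_r)$ and $y=\diag(y_1,\ldots,y_r)$, one gets
\[
z\ :=\ g\cdot g^x\cdot g^y\ =\ \diag(\lambda_1\Id_{V_1},\ldots,\lambda_r\Id_{V_r})\ \in\ G.
\]
If $r=1$, then $\lambda_1\Id_5\in G$ forces $\lambda_1^5=1$, so $\lambda_1=1$ and $e(g)\leq 3$. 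If $r\geq 2$ and $z=\Id$, again $e(g)\leq 3$.

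The main obstacle is the remaining sub-case $r\geq 2$ with $z\neq\Id$. Then $z$ is an involution whose $(-1)$-eigenspace has even dimension (since $\det z=1$), and to obtain $e(g)\leq 5$ one must show $z\in C^2$, where $C$ is the $G$-conjugacy class of $g$. My plan here is a finite case analysis along the admissible partitions of $5$ into the dimensions $(\dim V_i)$, namely $(4,1),(3,2),(3,1,1),(2,2,1),(2,1,1,1)$, together with the admissible sign patterns $(\lambda_i)$ constrained by $\det z=1$. In each pattern the block structure of $C_G(s)$ localises the problem inside a small subsystem, where Lemma~\ref{GuT1} applied to the semisimple factor carried by the largest block, combined with the rationality of the unipotent factor in the complementary $\SL_k(3)$-factor (via \cite[Theorem~1.7]{TZ04}), should produce two explicit conjugates of $g$ whose product equals $z$. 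As a uniform fallback, the character-sum criterion of Lemma~\ref{eq} reduces each problematic class to a finite verification on the character table of $\SL_5(3)$ in GAP \cite{gap}. Certifying $z\in C^2$ uniformly across the above decomposition types and sign patterns is the principal difficulty.
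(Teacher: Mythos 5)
Your treatment of $\PSL_3(3)$ and $\PSL_4(3)$ is fine, and the reduction for $\PSL_5(3)=\SL_5(3)$ up to the element $z=g\cdot g^x\cdot g^y=\diag(\lambda_1\Id_{V_1},\ldots,\lambda_r\Id_{V_r})$ is correct as far as it goes. But the proof is not complete: in the decisive sub-case $r\geq 2$, $z\neq\Id$, you need $z\in C^2$ (equivalently $z^{-1}\in C^2$, since $z^2=1$), and you only describe a plan for establishing this --- ``should produce two explicit conjugates'', ``the principal difficulty'' --- without carrying out any of the case analysis or the computation. This is precisely the hard part of the lemma, so as written your argument settles only the sub-cases where $z=\Id$. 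Note also that Lemma~\ref{eq} as stated tests whether $1\in C^k$; certifying that a prescribed involution $z$ lies in $C^2$ requires the general class-algebra-constant formula (or running Lemma~\ref{eq} with $k=5$ on the character table of $\SL_5(3)$), and neither computation has been performed.

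The paper sidesteps this difficulty by organizing the case analysis around the Jordan type of the unipotent part $u$ rather than around the indecomposable summands of $g$. When $u$ has type $\diag(J_4,1)$ or $\diag(J_3,J_2)$, the matrix $g$ is cyclic (one Jordan block per eigenvalue) and Lev's theorem on products of cyclic similarity classes \cite[Theorem 3]{Lev94} gives $g^{-1}\in C^2$ at once, so $e(g)\leq 3$. In the remaining types $\diag(J_3,1,1)$, $\diag(J_2,1,1,1)$, $\diag(J_2,J_2,1)$, the paper exhibits by hand a product of \emph{two} conjugates of $g$ equal to the identity or to an involution $w$; since such a $w$ is then in $C^2$ by construction and $w^2=1$, one gets $1\in C^4$ without ever having to prove separately that a given involution lies in $C^2$. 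To salvage your route you would need either to strengthen Lemma~\ref{id4} so as to control the scalars $\lambda_i$, or to verify $z\in C^2$ for each decomposition type and sign pattern --- which in effect reproduces the paper's analysis.
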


\begin{proof}
The second claim follows from \cite{gap}, so we  prove that $e(G) \leq 5$. Let $g=su \in G$, where $s$ is semisimple, $u$ is unipotent and $su=us$. If $u=1$ is  then $e(g)\leq 5$
by Lemma \ref{un2}. If $s=1$  then $e(g)\leq 2$ by \cite[Theorem 1.8(ii)]{TZ04}. 

Suppose that  $u,s\neq 1$.   Let $h\in H=\SL_5(3)$ be such that $g=hZ(H)$, and $h=su$, where $s,u\in  \langle h\rangle$, $s$ is  semisimple, $u$ is unipotent. Then $s$ is not scalar. Recall that an $(n \times n)$-matrix $A$ over a field $F$ is called cyclic if there exists $x \in F^n$ such that $\{x,Ax,\ldots,A^{n-1}x\}$ is a basis of $F^n$. The matrix $A$ is cyclic if and only if its Jordan canonical form contains only one Jordan block for every eigenvalue.

We   use Lemma  \ref{centsem} and the notation therein in the following discussion, in which we list the possibilities for $u$ (up to conjugation). In particular, $n_1,\ldots ,n_k$ are the dimensions of the homogeneous components of $s$; each of them is $g$-stable, and $m_i$ are the common dimensions of irreducible constituents of $s$ on $V_i$.
We denote by $J_m$ the canonical Jordan block of size $m$. 
Since $s$ is not scalar, $k \geq 2$ and $k<5$ since $u\neq 1$.
 
(a) $u=\diag(J_4,1)$. The fact that $u \in C_G(s)$ implies that $k=2$ 
so $m_1=m_2=1$, $n_1=4$, $n_2=1$ or conversely. Therefore $h$ is a cyclic matrix whose all eigenvalues are in $\mathbb{F}_3$. By \cite[Theorem 3]{Lev94}, $g^{-1}$ is the product of two conjugates of $h$, whence $e(g)\leq 3$.

(b) $u=\diag(J_3,J_2)$. Then $m_1=m_2=1$ and we can assume that $n_1=3$,   $n_2=2$,  $s=\diag(\mu_1 \cdot \Id_3,\mu_2 \cdot \Id_2)$, 
$\mu_1,\mu_2\in \mathbb{F}_3$ and $\mu_1^3\mu_2^2=1$. As $\mu_1\neq \mu_2$, $h$ is a cyclic matrix. As above,  $e(g)\leq 3$ by \cite[Theorem 3]{Lev94}.

(c) $u=\diag(J_3,1,1)$. In this case 
$s=\diag(\pm \Id_3,s_2)$, $s_2\in \GL_2(3)$. 
Then $J_3$ is real in $\SL_3(3)$ and $s_2$ is real in $\PGL_2(3)$. 
So the product of some two conjugates of $g$ in $G$ is $\diag( \pm \Id_3,\pm \Id_2)$, whence  
 $e(g)\leq 4$.

(d) $u=\diag(J_2,1,1,1)$. In this case we can assume that 
$s=\diag(\mu \cdot \Id_2,s_2)$, $\mu=\pm 1$, $s_2\in \SL_3(3)$.
 If $s_2$ is regular, then the product of  3 conjugates of $s_2$ in $\SL_3(3)$ equals $ \Id_3$ by Lemma \ref{Gow}, as $\det (-\Id_3)=-1$. If $\mu=-1$ then, by Lemma \ref{s33}, the product of 3 conjugates of $J_2$ in $\SL_2(3)$ equals $-\Id$, so the product of some $3$ conjugates of $-J_2$ in  $\SL_2(3)$ equals 1. Then  $e(g)\leq 3$.  If $\mu=1$ then, since
$J_2^3=1$, again $e(g)\leq 3$. If $s_2$ is not regular in $\SL_3(3)$ then $s_2$ centralizes some unipotent element in $\SL_3(3)$, and hence it is diagonalizable with at least two equal eigenvalues which belong to $\mathbb{F}_3$. So we can assume that $s_2=\diag(1,\pm \Id_2)$. Then $g = \diag(\mu.J_1,1,\pm \Id_2)$ is contained in the group $X= \left\{ \diag(A,\det A^{-1},\pm\Id_2)\ |\ A \in \GL_2(3) \right\}$. It follows that $g$ is real in $X$, so $e(g)=2$.
 
(e) $u=\diag(J_2,J_2,1)$. Then $n_1/m_1 \geq 2$. If $m_1 \geq 2$, then $m_1=2$ and $n_1=4$, while if $m_1=1$ then $n_1 \geq 2$. This implies that 
there are two isomorphic irreducible components, and their dimension is $1$ or $2$.
Suppose first that these are of dimension $2$. Then, replacing $g$ by some conjugate, we can assume that $s=\diag(s_1,s_1,t)$, where $s_1\in \GL_2(3)$ is irreducible, $t\in \mathbb{F}_3$. Then $t=1$ as $ \det s_1^2=1$. So $s=\diag(s_1,s_1,1)$.  
We deduce that $h=\diag(A,1)$, where $A=s_2\otimes J_2 \in \GL_2(3)\otimes \GL_2(3)$. Note that $\GL_2(3)\otimes \GL_2(3)\subset \SL_4(3)$.  
Since $J_2$ is real in $\GL_2(3)$, and semisimple elements are real in  $\GL_2(3)$ too, the result follows. Suppose now that $s$ has at least $2$ isomorphic composition factors   of dimension $1$. Then some eigenvalue $\mu$ of $h$ is of multiplicity at least 2. Let $W$ be the $\mu$-eigenspace of $h$. If $\dim W=2$ or $3$ then $h=\diag(-J_2,J_2,1)$ 
and  $e(h)=2$ as $J_2$ is real in $\GL_2(3)$. If $\dim W=4 $ then  $h=\diag(-J_2,-J_2,1)$, and again  $e(h)=2$.\end{proof} 

\begin{lemma}\label{44s} Let $G=\SL_n(4)$. Then $e(G)\leq 18$. If $h\in G$
is semisimple then $e(h)\leq 9$.
\end{lemma}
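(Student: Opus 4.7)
The plan is to decompose $V=\mathbb{F}_4^n$ as a direct sum of indecomposable $\mathbb{F}_4\langle h\rangle$-submodules $V=V_1\oplus\cdots\oplus V_r$ and reduce to proving the bound block by block. On each block, Lemma~\ref{in3} realizes $h|_{V_i}=s_iu_i$ inside a Kronecker product $M_{1,i}\otimes M_{2,i}$ with $M_{1,i}\cong\GL_{k_i}(4)$, $M_{2,i}\cong\GL_{l_i}(4)$, $k_il_i=\dim V_i$, with $s_i$ irreducible in $M_{1,i}$ and $u_i$ a single Jordan block in $M_{2,i}$. Because $\det_{V_i}(\diag(A,\ldots,A))=(\det A)^{l_i}=1$, the natural diagonal embedding $\SL_{k_i}(4)\hookrightarrow M_{1,i}$ lands inside $\SL(V_i)$, and similarly $\SL_{l_i}(4)\hookrightarrow M_{2,i}\cap\SL(V_i)$; so every conjugating element constructed below automatically sits in $\SL(V_i)$, and block-diagonal combinations then sit in $\SL(V)$.

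On a fixed indecomposable block I would apply Corollary~\ref{GuT} to the pair $(\GL_{k_i}(4),\SL_{k_i}(4))$ with the regular semisimple element $s_i$. The required hypothesis $s_i^{-1}\in s_i^2\,\SL_{k_i}(4)\,Z(\GL_{k_i}(4))$ is automatic because $\det(s_i)^{-3}=1$ in the cyclic group $\mathbb{F}_4^{\times}$ of order $3$. The corollary produces $x_1,x_2\in\SL_{k_i}(4)$ with $s_i\cdot s_i^{x_1}\cdot s_i^{x_2}=\lambda_i\Id_{V_i}$ for some $\lambda_i\in\mathbb{F}_4^\times$. Since $\lambda_i^3=1$, stacking this construction three times shows that nine conjugates of $s_i$ in $\SL(V_i)$ multiply to $\Id_{V_i}$. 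On the unipotent side, Lemma~\ref{ev1} guarantees that $u_i$ is rational, hence real, in $\SL_{l_i}(4)$, so there is $y_i\in\SL_{l_i}(4)$ with $u_i\cdot u_i^{y_i}=\Id_{V_i}$.

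Next I splice the two tallies in $18=\mathrm{lcm}(9,2)$ slots. Pick conjugating elements of the form $a_jb_j$ with $(a_1,\ldots,a_{18})$ cycling through $(1,x_1,x_2)$ six times and $(b_1,\ldots,b_{18})$ cycling through $(1,y_i)$ nine times. Since $M_{1,i}$ commutes with $M_{2,i}$, the $s$- and $u$-factors commute freely and one computes
$$\prod_{j=1}^{18} h_i^{a_jb_j}=\Bigl(\prod_{j=1}^{18} s_i^{a_j}\Bigr)\Bigl(\prod_{j=1}^{18} u_i^{b_j}\Bigr)=(\lambda_i\Id_{V_i})^6\cdot\Id_{V_i}=\lambda_i^6\Id_{V_i}=\Id_{V_i}.$$
Assembling these 18 block-diagonal conjugating elements across all $V_i$ gives 18 conjugates of $h$ in $\SL_n(4)$ with product $\Id_V$, so $e_G(h)\leq 18$. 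For semisimple $h$ the unipotent part is trivial, each indecomposable summand is irreducible ($l_i=1$), and only the nine-conjugate $s$-argument is needed, yielding $e_G(h)\leq 9$.

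The main nuisance is keeping the conjugating elements inside $\SL(V)$ rather than merely $\GL(V)$; this is precisely what the diagonal-embedding determinant computation in the first paragraph secures. The degenerate cases $k_i=1$ (where $s_i$ is already a scalar $\alpha\Id_{V_i}$ with $\alpha^3=1$, so the 3-conjugate scalar collapse is tautological and Corollary~\ref{GuT} is not needed) and $l_i=1$ (where $u_i=\Id_{V_i}$) fit into the same template without modification.
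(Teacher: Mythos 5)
Your proposal is correct, and while the semisimple half coincides with the paper's argument (decompose $V$ into irreducible $\langle h\rangle$-blocks, apply Corollary~\ref{GuT} on each block to make three conjugates scalar, and cube the scalar, which has order dividing $3$ since $q=4$, to reach $9$), your treatment of the general element $h=su$ takes a genuinely different route. The paper argues globally: $C_{\GL(V)}(s)$ is a direct product of groups $\GL_m(4^k)$, so $u$ lies in a product of groups $\SL_m(4^k)$ and is real in $C_G(s)$ by \cite[Theorem 1.7]{TZ04}; hence $s^2=h\cdot h^y$ is a product of two conjugates of $h$ and $e(h)\leq 2\cdot e(s^2)\leq 18$. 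You instead transplant the strategy the paper itself uses for $q=3$ (Lemmas~\ref{id4} and~\ref{33s}): decompose $V$ into indecomposable blocks, pass to the Kronecker product of Lemma~\ref{in3}, handle the irreducible semisimple tensor factor with Corollary~\ref{GuT} and the Jordan-block factor with Lemma~\ref{ev1}, and interleave the two periodic sequences of conjugators over $18=\mathrm{lcm}(9,2)$ slots --- in effect carrying out Lemma~\ref{xy8} by hand inside each commuting pair $M_{1,i}M_{2,i}$. Both routes give the same bound. The paper's version is shorter because the centralizer does the bookkeeping in one stroke; yours is more self-contained, needing only reality of a single Jordan block in $\SL_{l}(4)$ rather than reality of $u$ in the full centralizer, and the points that genuinely require care --- that the conjugators land in $\SL(V_i)$ (and hence in $\SL(V)$ after block-diagonal assembly) and that the hypothesis $s_i^{-1}\in s_i^{2}\,\SL_{k_i}(4)\,Z(\GL_{k_i}(4))$ of Corollary~\ref{GuT} holds because $\mathbb{F}_4^{\times}$ has exponent $3$ --- are all checked correctly.
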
 

\begin{proof} 
Suppose that $h \in G$ is semisimple. Let $V=\bigoplus_{i=1}^k V_i$ be a decomposition of $V$ as a direct sum of minimal $\langle h \rangle$-stable subspaces of $V$. Let $h_i$ be the projection of $h$ on $V_i$ for $i=1,\ldots ,k$.  Then $h_i$ is irreducible on $V_i$. Note that $|\GL(V_i) : \SL(V_i) Z(\GL(V_i))| \in\{1,3\}$. By Corollary  \ref{GuT},
there are $x_i,y_i\in \SL(V_i)$ such that $h_ix_ih_ix_i^{-1}y_ih_iy_i^{-1}\in Z(\GL(V_i))$.
This is also true for  $i$ with $\dim V_i= 1$.
It follows that the product of some 3 conjugates of $h$ in $H$ acts scalarly on each $V_i$. 
Since $q=4$, we have  $e(h)\leq 9$.

In general, let $h=su$, where $1\neq s\in H$ is semisimple, $1\neq u\in H$ is unipotent and $us=su$. Then $e_{C_G(s)}(u)\leq 2$. Indeed, $C_{\GL(V)}(s)$ is a direct product of groups isomorphic to $\GL_{m}(q^k)$ for various $m,k$, see \cite[Lemma 3.2]{TZ04}. Therefore, $u$ is contained in a subgroup of $C_{\GL(V)}(s)\cap H$ isomorphic to a direct product of groups $\SL_{m}(q^k)$, since $|u|$ is a $p$-element for $p|q$ and $|GL_{m}(q^k)/\SL_{m}(q^k)|$ is a $p'$-group. By \cite[Theorem 1.7]{TZ04}, $u$ is real in $C_G(s)$. Therefore, $s^2$ is a product of 2 conjugates of $h$; as $e(s^2)\leq 9$, we have $e(h)\leq 18$.
\end{proof}

\begin{prop}\label{ppn} Let $G=\PSL_n(q)$, $n>2$. Then $e(G)\leq n$.
\end{prop}
\begin{proof} For $q>3$ this follows from Lev's result on $\cn(G)$, see Lemma \ref{Lev}. If $q=2$ the result follows from Theorem \ref{gg6} and Lemma \ref{s25}. If $q=3$ the result follows from Lemmas \ref{33s} and \ref{s53}.   
\end{proof}

\begin{prop} \label{geqn}
Let $F$ be an infinite field and let $n \geq 2$ be an integer. If $n \in \{2,3\}$, assume that $F$ is algebraically closed. Then $e(\PSL_n(F)) = n$.
\end{prop}

\begin{proof}
Assume first that $n=2$, so that $F$ is algebraically closed. As $\GL_2(F) = \SL_2(F) \cdot Z(\GL_2(F))$, elements conjugate   in $\GL_2(F)$ are conjugate in $\SL_2(F)$. For $\alpha \in F^{\times}$, set
$$M_{\alpha} = \left( \begin{array}{cc} \alpha & 0 \\ 0 & \alpha^{-1} \end{array} \right), \hspace{.2cm} 
A = \left( \begin{array}{cc} 1 & 1 \\ 0 & 1 \end{array} \right), \hspace{.2cm}
S = \left( \begin{array}{cc} 0 & 1 \\ 1 & 0 \end{array} \right), \hspace{.2cm}
T = \left( \begin{array}{cc} 1 & 0 \\ 0 & -1 \end{array} \right).
$$
By basic linear algebra, since $F$ is algebraically closed, every matrix in $\SL_2(F)$ is conjugate to one of the matrices $A$, $-A$ or $M_{\alpha}$ for some $\alpha \in F^{\times}$. We have $S^{-1}M_{\alpha}S = M_{\alpha^{-1}} = M_{\alpha}^{-1}$, $T^{-1}AT=A^{-1}$. So $e(\PSL_2(F))=e(\SL_2(F))=2$.

Assume now that $n \geq 3$ and let $l:=e(\PSL_n(F))$. 
We have $l \leq n$ by Lev's result (Lemma \ref{Lev}), so we only need to prove that $l \geq n$. Assuming this is false, Lemma \ref{lemmaPSL} implies that $\mu^{n(n-1)}=1$ for every $\mu \in F^{\ast}$. This contradicts the fact that $F$ is infinite.
\end{proof}

We summarize the information obtained in this section as follows.

\begin{corol} \label{boundsPSL}
Let $n \geq 2$ be an integer, let $F$ be a field and let $G:=\PSL_n(F)$.
\begin{enumerate}[$(1)$]
\item If $F$ is finite of order $q$ and $d=(n,q-1)$, then $e(G) \geq \min \{n,(q-1)/d\}$.
\item If $F$ is finite of order $q$ and $3 \leq n \leq (q-1)/d$ then $e(G)\leq n$.
\item If $n \geq 4$ and $F$ is infinite, then $e(G)=n$.
\item If $n\leq 3$ and $F$ is algebraically closed, then $e(G)=n$.
\item $e(\PSU_n(q)) \geq \min\{n,(q+1)/(n,q+1)\}$.
\end{enumerate}
\end{corol}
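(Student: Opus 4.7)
The plan is to derive all lower bounds from Lemma~\ref{lemmaPSL} applied to the diagonal element $g=\diag(\mu^{1-n},\mu,\ldots,\mu)$ for a suitable $\mu$, and to match them with upper bounds coming from Lev's covering number theorem. Part~(1) I would read off directly from the last sentence of Lemma~\ref{lemmaPSL}: choosing $\mu\in\mathbb{F}_q^\times$ of order $q-1$, the statement ``if $e(G)<n$ then $e(G)\geq(q-1)/d$'' is logically the same as $e(G)\geq\min\{n,(q-1)/d\}$. Part~(2) then follows at once, since under the hypothesis $n\leq(q-1)/d$ the bound of (1) gives $e(G)\geq n$, while Lemma~\ref{Lev} gives $e(G)\leq\cn(G)=n$ (applicable since $q\geq 4$).

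For parts~(3) and~(4) the upper bounds come from Lev's theorem in the form quoted in the discussion preceding Proposition~\ref{uuu3}: $\cn(\PSL_3(F))=3$ for $F$ algebraically closed, and $\cn(\PSL_n(F))=n$ for $n\geq 4$ and $|F|\geq 4$, in particular when $F$ is infinite. For the matching lower bounds, since $F$ is infinite the polynomial $x^N-1$ has at most $N$ roots in $F$, so $F^\times$ contains an element $\mu$ of order exceeding $n(n-1)$ (or of infinite order). Then $0<ln\leq n(n-1)<\operatorname{ord}(\mu)$ for every $l\in\{1,\ldots,n-1\}$, forcing $\mu^{ln}\neq 1$; Lemma~\ref{lemmaPSL} rules out $e_G(g)=l$ for each such $l$, so $e_G(g)\geq n$ and hence $e(G)=n$.

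Part~(5) is the unitary analogue. I would work inside $\GL_n(\mathbb{F}_{q^2})$ and choose $\mu\in\mathbb{F}_{q^2}^\times$ of order exactly $q+1$, which exists since $(q+1)\mid(q^2-1)$. The relation $\mu^{q+1}=1$ ensures that $g=\diag(\mu^{1-n},\mu,\ldots,\mu)$ preserves the standard Hermitian form and has determinant $1$, so $g\in\SU_n(q)$; write $\bar g$ for its image in $\PSU_n(q)$, which is non-central unless $(q+1)\mid n$ (in which case $(q+1)/(n,q+1)=1$ and the asserted inequality is vacuous). If $e_{\PSU_n(q)}(\bar g)=l<n$, lifting the conjugators to $\SU_n(q)\subseteq\GL_n(\mathbb{F}_{q^2})$ makes $g^{x_1}\cdots g^{x_l}$ a scalar matrix in $\GL_n(\mathbb{F}_{q^2})$, so Lemma~\ref{lemmaPSL} with $F=\mathbb{F}_{q^2}$ forces $\mu^{ln}=1$, i.e.\ $(q+1)\mid ln$, yielding $l\geq(q+1)/(n,q+1)$. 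The only delicate point throughout is to notice, for part~(5), that Lemma~\ref{lemmaPSL} requires the conjugators only to lie in $\GL_n(F)$; passing from $\PSU_n(q)$ to $\SU_n(q)\subseteq\GL_n(\mathbb{F}_{q^2})$ is therefore legitimate and the proof transfers verbatim, while the remaining work reduces to the correct choice of $\mu$ in each regime and invoking Lev.
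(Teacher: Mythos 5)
Your proposal is correct and follows essentially the same route as the paper: parts (1), (2) and (5) are read off from Lemma \ref{lemmaPSL} (with $\mu$ of order $q-1$, resp.\ $q+1$ inside $\mathbb{F}_{q^2}^\times$ for the unitary case) combined with Lev's theorem, and parts (3), (4) combine Lev's upper bound with the observation that an infinite field cannot have $\mu^{ln}=1$ for all $\mu$. The only cosmetic difference is that for (3) and (4) you select a single $\mu$ of large multiplicative order, whereas the paper argues by contradiction that every $\mu\in F^{\times}$ would be a root of $X^{n!}-1$; these are the same argument in contrapositive form.
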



\begin{proof} Item (1) follows from Lemma \ref{lemmaPSL}. Item (2) follows from 
Lev's theorem (Lemma \ref{Lev}). 
Item (5) follows from Lemma \ref{lemmaPSL} by choosing $\mu$ of order $q+1$, so 
the element $g$ in Lemma \ref{lemmaPSL} belongs to $\SU_n(q)$. Items (3) and (4) follow from Proposition \ref{geqn}.
\end{proof}

\subsection{Unitary groups   ${\rm PSU}_n(q)$}\hfill\\

We first recall a few well known facts on maximal parabolic subgroups of $H=\SU_n(q)=\SU(V)$.
By \cite[Proposition  2.3.2]{KL}, $V=W+X+W'$, where $W,W'$ are totally isotropic subspaces of $V$ of dimension $r=[n/2]$ and $(W+W')^\perp=X$. This is equivalent to saying that  $V$
has a basis with Gram matrix $\begin{pmatrix}0&0&\Id_r\\ 0&1&0\\ \Id_r&0&0\end{pmatrix}$ if $n$ is odd and 
 $\begin{pmatrix}0&\Id_r\\  \Id_r&0\end{pmatrix}$ if $n$ is even. Let $P$ be the stabilizer of $W$ in $H$. Then $P=LU$, where $U$ is the unipotent radical of $P$, and $L$
is a complement of $U$ in $P$.  Let $\sigma$ be the automorphism of $H$ arising from the Galois automorphism of $\mathbb{F}_{q^2}/\mathbb{F}_q$, sending $x \in \mathbb{F}_{q^2}$ to $x^q$, so $\sigma$ rises every matrix entry of $h\in H$ to the $q$-th power. Under the basis with above Gram matrix, $L$ consists of  all matrices of determinant $1$ of the form $\mbox{diag}(M,d,{}^t\sigma(M)^{-1})$ if $n$ is odd, and $\mbox{diag}(M,{}^t\sigma(M)^{-1})$ if $n$ is even, where $M\in \GL_r(q^2)$ is such that $\det (M) \cdot \sigma(\det (M^{-1})) \cdot d=1$. (Here $t$ means the transpose.) 
In particular,  if  $M$ is diagonal then $\sigma(M)=M^q$. It follows that the derived subgroup of $L$ is $L' \cong \SL_r(q^2)$.

\begin{lemma} \label{pu3} Let $H=\SU_n(2)$, $n>1$ and let $s\in H$ be a semisimple element.   Then  $e(s)\leq 9$ and $e(H)\leq 18 $.\end{lemma}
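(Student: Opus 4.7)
The plan is to mimic the strategy of Lemma \ref{44s} for $\SL_n(4)$, transported to the unitary setting where $q=2$ and $q+1=3$ play the same role in the center.

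For the first inequality, let $s \in H = \SU_n(2) = \SU(V)$ be semisimple. I would first decompose $V$ as an orthogonal sum of $s$-invariant nondegenerate subspaces $V = V_1 \perp \cdots \perp V_k$, where each $V_i$ is either $(a)$ an irreducible $\mathbb{F}_4\langle s\rangle$-module that is nondegenerate for the Hermitian form, or $(b)$ of the form $W_i \oplus W_i^*$ with $W_i, W_i^*$ mutually dual totally isotropic irreducible $\mathbb{F}_4\langle s\rangle$-submodules. In case $(a)$, $\U(V_i) \cong \U_{m_i}(2^{k_i})$ with $m_i k_i = \dim_{\mathbb{F}_4} V_i$; Schur's lemma forces $s|_{V_i}$ to be regular semisimple. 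In case $(b)$, the stabilizer of $W_i$ inside $\SU(V_i)$ is $\SL_{n_i}(4^{k_i})$, where $n_i = \dim_{\mathbb{F}_4} W_i$, and $s|_{W_i}$ is regular semisimple in $\GL(W_i)$.

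Next I would apply Corollary \ref{GuT} componentwise. In case $(b)$ with $(G,S) = (\GL_{n_i}(4^{k_i}), \SL_{n_i}(4^{k_i}))$, and in case $(a)$ with $(G,S) = (\U_{m_i}(2^{k_i}), \SU_{m_i}(2^{k_i}))$, the hypothesis $h^{-1} \in h^2 SZ(G)$ holds automatically since $G/SZ(G)$ is cyclic of order dividing $3$. The corollary produces $x_i, y_i \in S_i$ such that $s|_{V_i} \cdot x_i (s|_{V_i}) x_i^{-1} \cdot y_i (s|_{V_i}) y_i^{-1}$ lies in $Z(G)$; in case $(a)$ this is $Z(\U(V_i)) = \mu_3$, and in case $(b)$ it is $Z(\GL_{n_i}(4^{k_i})) = \mathbb{F}_4^\times$, which embeds as an element of order dividing $3$ in $\SU(V_i)$. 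Assembling across $i$, the product of three suitable conjugates of $s$ in $H$ is an element $t$ with $t^3 = 1$, so $t \cdot t \cdot t = 1$ exhibits $9$ conjugates of $s$ multiplying to the identity, giving $e(s) \leq 9$.

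For the general bound $e(H) \leq 18$, let $g = su = us \in H$ with $s, u \in \langle g\rangle$, $s$ semisimple, $u$ unipotent. The centralizer $C_{\U(V)}(s)$ is a direct product of $\U_m(2^k)$ and $\GL_m(4^k)$ factors (Observation \ref{centsem} and its unitary analogue); the $2$-element $u$ lies in $C_{\U(V)}(s) \cap H$, which in its $2$-part is contained in a direct product of $\SU$- and $\SL$-type factors over fields of characteristic $2$. By Lemma \ref{ev1}, unipotent elements of such groups are rational, hence real, so $e_{C_H(s)}(u) \leq 2$. Lemma \ref{xy8} then gives $e_H(g) \leq e_H(s^2) \cdot e_{C_H(s)}(u) \leq 9 \cdot 2 = 18$, since $s^2$ is semisimple.

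The main technical obstacle will be handling the exceptional pair $(n,q) = (3,2)$ excluded from Lemma \ref{GuT1}, which can arise in case $(a)$ as a factor $\U_3(2)$; this small group can be handled directly (e.g., by GAP or by analyzing its semisimple classes of order $9$). Beyond this, the argument is bookkeeping parallel to Lemma \ref{44s}, the only structural subtlety being the orthogonal decomposition into self-dual and dual-pair isotypic components, for which the paper has already developed the needed tools.
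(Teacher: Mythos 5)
Your proposal is correct and follows essentially the same route as the paper: decompose $V$ into $s$-stable nondegenerate/dual-pair pieces, apply Corollary \ref{GuT} componentwise (using that the relevant index is $1$ or $3$) to land a product of three conjugates of $s$ in an order-$3$ central element, and then use reality of $u$ in $C_H(s)$ together with Lemma \ref{xy8} to double the bound. The only cosmetic differences are that the paper refines the decomposition so every summand is nondegenerate and works with the unitary pair $(\U(V_i),\SU(V_i))$ throughout (rather than the $\GL$ pair on $W_i$), and it cites \cite[Theorem 1.7(ii)]{TZ04} rather than Lemma \ref{ev1} for the reality of $u$; your explicit flagging of the excluded pair $(3,2)$ is a reasonable extra precaution.
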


\begin{proof} Let $V$ be the underlying space for $H$. By \cite[Satz 2]{Huppert}, $V=\bigoplus_i V_i$, where $V_i$'s are minimal $\langle s \rangle$-stable non-degenerate subspaces of $V$. Let $s_i$ be the restriction of $s$ to $V_i$. Then either $s_i$ is irreducible, or $V_i=W_i \oplus W_i'$,
where $W_i,W_i'$ are totally isotropic minimal $\langle s_i \rangle$-stable subspaces. In addition, there exists a basis of $V_i$ such that if $M_i$ is the matrix of $s_i$ on $W_i$ then 
the matrix $M_i'$ of $s_i$ on $W_i'$ is ${}^t\gamma(M_i)^{-1}$, where $\gamma$ is a non-trivial Galois automorphism of ${\mathbb F}_4$ (\cite[Satz 2]{Huppert}). 
As $W_i$ is irreducible, $s_i$ is a regular element of $\GL(W_i)$. Moreover, by Schur's lemma $s_i$ is regular in $\GL(V_i)$ unless $W_i,W_i'$ are isomorphic $\mathbb{F}_{q^2}\langle s_i\rangle $-modules, equivalently, $M_i$ and $M'_i$ are similar as matrices of $\GL_d(q^2)$, $d=\dim W_i$. Then $M_i$ is contained in a subgroup $X_i$ of  $\GL_d(q^2)$ isomorphic to $\U_d(q)$. It follows that $V_i$ is a direct sum of  two $\langle s_i \rangle$-stable non-degenerate subspaces. Therefore, if we choose the decomposition $V=\bigoplus_i V_i$ with maximal number of non-degenerate terms $V_i$ then $s_i$ is a regular element in $\GL(V_i)$ and $\U(V_i)$.

Since $|\U(V_i)/Z(\U(V_i)) \SU(V_i)|\in\{1,3\}$, by Corollary \ref{GuT} there are $x_i,y_i\in \SU(V_i)$ such that $s_i x_i s_i x_i^{-1} y_i s_i y_i^{-1} \in Z(\U(V_i))$. Therefore, the product of some 3 conjugates of $s$ is of order 3 or 1.  This implies $e(s)\leq 9$. 

Let $h\in H$ and $h=su$, $s,u\in \langle h\rangle$ with $s$ semisimple, $u$ unipotent. 
Note that $C_{\U(V)}(s)$ is a direct product of groups
isomorphic to $\GL_{m}(q^{2k})$ and $\U_m(q^k)$ for various $m,k$, see \cite[Lemma 3.3]{TZ04}. Therefore, $u$ is contained in a subgroup of $C_{\GL(V)}(s)\cap H$ isomorphic to a direct product of groups  $\SL_{m}(q^k)$ and  $\SU_m(q^k)$. By \cite[Theorem 1.7(ii)]{TZ04}, applied to every multiple, $u$ is real in $C_G(s)$. Therefore, $s^2$ is a product of 2 conjugates of $h$; as $e(s^2)\leq 9$, we have $e(h)\leq 18$. \end{proof}

\begin{lemma} \label{pu2} Let $G=\PSU_n(q),n>1$, q odd.  
Then $e(G)\leq 3n+3$. If $(n,q)=1$ then $e(G)\leq 3n$.\end{lemma}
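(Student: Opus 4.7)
The plan is to decompose an arbitrary element into its commuting semisimple and unipotent parts and combine the bounds already established for each. Given $g\in G=\PSU_n(q)$, write $g=su$ with $s,u\in\langle g\rangle$ commuting, $s$ semisimple and $u$ unipotent. Since $q$ is odd and $\PSU_n(q)$ is of type ${}^2A_{n-1}$, none of the exclusions listed for Lemmas \ref{u5n}--\ref{u26} apply (the Suzuki case and the $F_4,E_7,E_8$ small-characteristic exceptions are irrelevant here), so Lemma \ref{u26} gives
\[ m\ :=\ e_{C_G(s)}(u)\ \leq\ 3. \]

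Since $s$ is semisimple, so is $s^{m}$, and Lemmas \ref{un2} and \ref{u42} furnish the desired bounds on the semisimple factor: one has $e_G(s^{m})\leq n$ whenever $(n,q)=1$, and $e_G(s^{m})\leq n+1$ whenever $(n,q)>1$. Feeding these inputs into Lemma \ref{xy8} with $x=s$ and $y=u$ yields
\[ e_G(g)\ =\ e_G(su)\ \leq\ m\cdot e_G(s^{m})\ \leq\ 3\cdot e_G(s^{m}), \]
which is $\leq 3n$ in the coprime case and $\leq 3(n+1)=3n+3$ in general. The degenerate situations $u=1$ (so $g=s$) and $s=1$ (so $g=u$) are absorbed painlessly: in the first, $m=1$ and the bound is just the semisimple bound $n$ or $n+1$; in the second, $e_G(s^{m})=1$ and the bound collapses to $m\leq 3$.

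There is essentially no obstacle here beyond bookkeeping. The substantive content has already been isolated in the cited lemmas, namely (i) the bound on the generalized order of a unipotent element inside a centralizer (Lemma \ref{u26}, which ultimately rests on the semirationality of unipotent classes from \cite{TZ04}), and (ii) the bounds $e(s)\leq n$ and $e(s)\leq n+1$ on semisimple elements (Lemmas \ref{un2} and \ref{u42}, proved by exhibiting $s$ as a product of two conjugates of a regular semisimple element of appropriate order via Lemma \ref{Gow}). The only point deserving a line of verification is that the hypotheses of Lemma \ref{u5n} are met for $\PSU_n(q)$ in odd characteristic, which is immediate from the type.
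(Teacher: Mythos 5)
Your proof is correct and follows essentially the same route as the paper: decompose $g=su$, bound $e_{C(s)}(u)\leq 3$ via Lemma \ref{u26}, bound $e(s^m)$ by $n$ or $n+1$ via Lemmas \ref{un2} and \ref{u42}, and combine with Lemma \ref{xy8}. The only cosmetic difference is that the paper lifts to $H=\SU_n(q)$ (the genuine $\mathbf{G}^F$ to which Lemma \ref{u5n} applies) and then projects to $G$, whereas you apply the lemmas in $\PSU_n(q)$ directly; this does not affect the argument.
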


\begin{proof} Let $g\in G$ and let $h\in H = \SU_n(q)$ be such that $g=hZ$ where $Z=Z(H)$. Let $h=su$ with $s,u\in \langle h\rangle$, $s$ semisimple, $u$ unipotent.  
By Lemma \ref{u26}, $e_{C_H(s)}(u)\leq 3$. Therefore, $s^3$ is a product of $3$ conjugates of $h$ in $C_H(s)$. By Lemmas \ref{un2} and \ref{u42}, $e_G(s^3Z) \leq n$ if $(n,q)=1$ and  $e_G(s^3Z) \leq n+1$ otherwise. So the result follows by Lemma \ref{gr1}.\end {proof}

\begin{theor}\label{u36} 
Let $G = \PSU_n(q)$, $n>2$. 
If $q$ is even, then  $e(G) \leq 2n-2$.
If $3|q$ then $e(G) \leq 3n-3$.
\end{theor}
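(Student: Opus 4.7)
The plan is to imitate the strategy used for $\PSU_n(q)$ with $q$ odd (Lemma \ref{pu2}), but exploit the stronger fact that in characteristic $2$ the unipotent elements are rational (Lemma \ref{ev1}) to cut the factor coming from the unipotent part from $3$ down to $2$, and then refine the bound on the semisimple part using the non-regularity that is forced when $u \neq 1$.

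Write $g = su$ with $s,u \in \langle g \rangle$, $s$ semisimple, $u$ unipotent and $su = us$. By Lemma \ref{ev1} the unipotent elements of $\SU_n(q)$ are rational. Since $H := \SU_n(q)$ satisfies $C_H(s) = \prod_i \SL_{a_i}(q^{2b_i}) \times \prod_j \SU_{c_j}(q^{d_j})$ (cf.\ \cite[Lemma 3.3]{TZ04}), each factor is a classical group of Lie type in characteristic $2$ to which Lemma \ref{ev1} applies; so $u$ is rational (and in particular real) in $C_G(s)$, giving $e_{C_G(s)}(u) \leq 2$. Applying Lemma \ref{xy8} with $x=s,\ y=u$ yields
\[
e_G(g) \;\leq\; 2\cdot e_G(s^2).
\]

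The easy cases are disposed of immediately: if $u = 1$ then $g=s$ is semisimple, and Lemmas \ref{un2}/\ref{u42} give $e_G(g) \leq n+1 \leq 2n-2$ for $n \geq 4$, with $n=3$ handled separately via Lemmas \ref{u3q1}, \ref{ma3}; if $s \in Z(H)$ then $g$ is unipotent and $e_G(g) \leq 2$. So assume $s \notin Z(H)$ and $u \neq 1$. Since the nontrivial unipotent $u$ centralizes $s^2$, the element $s^2$ is non-regular semisimple in $H$, and the goal reduces to proving $e_G(s^2) \leq n-1$.

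For this I would mimic the proof of Lemma \ref{un2}, but replace the regular semisimple witness $h$ of order $n$ by a regular semisimple witness of order dividing $n-1$, producing $s^2 Z(H)$ as a product of two $G$-conjugates of $h Z(H)$ via Gow's theorem (Lemma \ref{Gow}), and then invoking Lemma \ref{gr1}(1). The non-regularity of $s^2$ ensures that $V$ admits a nontrivial orthogonal decomposition $V = V' \oplus V''$ with $s^2$ scalar on $V''$ of dimension $\geq 2$, so that $s^2$ lies in a proper Levi subgroup inside which a smaller Coxeter-type element of order $n-1$ (when $n$ is even, using $(n-1,q)=1$) or of order $n-2$ supported on $V'$ (when $n$ is odd $\geq 5$, using $(n-2,q)=1$) can be produced, with the scalar action on $V''$ chosen so that $h$ has the requisite distinct eigenvalues over $\overline{\mathbb{F}}_q$ and is therefore regular in $H$.

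The main obstacle will be Step 3: arranging this construction uniformly across all non-regular semisimple $s^2$. Concretely, one must case-split on the eigenvalue structure of $s^2$ (eigenvalues $\lambda$ with $\lambda^{q+1}=1$ of multiplicity $\geq 2$ versus paired eigenvalues $\lambda, \lambda^{-q}$ versus Galois orbits of size $>1$) to exhibit the appropriate nondegenerate $s^2$-scalar subspace $V''$, verify that the chosen scalar on $V''$ avoids collisions with the Coxeter eigenvalues on $V'$, and check that the resulting $h$ lies in $\SU(V)$ (adjusting by a determinant-compensating scalar if needed, much as in the proof of Lemma \ref{u42}). The case $n$ odd is the delicate one, since then $n-1$ is even and $(n-1,q) \neq 1$ rules out a direct Coxeter element of order $n-1$, forcing the detour via order $n-2$.
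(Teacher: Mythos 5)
Your reduction $e_G(g) \leq 2\cdot e_G(s^2)$ via rationality of unipotent elements in $C_G(s)$ and Lemma \ref{xy8} is reasonable, but the second half of your plan cannot be carried out: there is \emph{no} regular semisimple element of $\PSU_n(q)$ whose order divides $n-1$ (or $n-2$). If $h\in\SU_n(q)$ satisfies $h^k\in Z(\SU_n(q))$, then every eigenvalue of $h$ is a root of a single polynomial $x^k-c$, so $h$ has at most $k$ distinct eigenvalues; regularity of a semisimple element in type $A_{n-1}$ forces $n$ pairwise distinct eigenvalues, whence $k\geq n$. So any regular semisimple witness to which Gow's theorem (Lemma \ref{Gow}) can be applied has order at least $n$ modulo the center, and the best you can extract from that route is $e_G(s^2)\leq n$ or $n+1$ (exactly Lemmas \ref{un2} and \ref{u42}), giving $e(g)\leq 2n$ or $2n+2$ — not $2n-2$. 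Your "main obstacle" in Step 3 is therefore not a delicate case analysis but an impossibility; this already shows concretely for $n=4$, $q$ even, where $\PSU_4(q)=\SU_4(q)$ and an element of order $3$ has at most three eigenvalues on a four-dimensional space, hence is never regular. A secondary slip: non-regularity of $s^2$ does not ensure a nondegenerate subspace on which $s^2$ is scalar (a repeated eigenvalue $\lambda$ with $\lambda^{q+1}\neq 1$ lives on a totally isotropic eigenspace paired with the $\lambda^{-q}$-eigenspace), and in any case Gow's theorem requires the \emph{witness} $h$, not $s^2$, to be regular, so the position of $s^2$ in a Levi subgroup is irrelevant to that argument.

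The paper's proof is entirely different and avoids the semisimple/unipotent decomposition altogether: by Lemma \ref{5eg} (the Ellers--Gordeev--Herzog theorem), every element of $G$ is a product of two unipotent elements, so $e(G)$ is at most the exponent of a Sylow $2$-subgroup; for $\PSU_n(q)$ with $q$ even this exponent is the power $2^m$ with $n\leq 2^m<2n-1$, and since $2^m$ is even it is at most $2n-2$. If you want to salvage your approach you would need a genuinely new bound of the form $e_G(t)\leq n-1$ for non-regular semisimple $t$, proved by some mechanism other than Gow's theorem applied to a short regular witness.
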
 

\begin{proof} If $2|q$ then the exponent of a Sylow $2$-subgroup of $G$ equals $2^m$, where $n\leq 2^m\leq2n-2$. Similarly, if $3|q$ then the exponent of a Sylow $3$-subgroup of $G$ equals $3^m$, where $n\leq 3^m\leq 3n-3$. The result follows from Lemma \ref{5eg}.\end{proof}

\section{Exceptional groups}

By Lemma \ref{ee7}, we are left to review the following exceptional groups: ${\rm Sz}(q), p=2$; $F_4(q), p=2$; ${^2}F_4(q), p=2$; $E_6(q)$, ${^2}E_6(q)$; $E_7(q), p=2$; $E_8(q), p \in \{2,5\}$. For $G=G_2(q), p=2$, we obtain a better bound than in Lemma \ref{ee7}. 



\begin{lemma} \label{g24}
Let $G=G_2(q)$, q even. Then $e(G)=3$. 
\end{lemma}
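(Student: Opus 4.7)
The plan is to write $g \in G$ as $g = su$ with $s,u \in \langle g\rangle$, $s$ semisimple, $u$ unipotent and $su = us$, and to bound $e(g)$ in the three resulting cases.

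The two easy cases come first. If $u = 1$, then $g = s$ is semisimple; since $G_2(q)$ does not appear in the list of exceptions of Lemma \ref{sn2}, $s$ is real in $G$, so $e(g) \le 2$. If $s = 1$, then $g = u$ is unipotent, and Lemma \ref{ev1} applies to $G_2(q)$ with $q$ even: $u$ is rational, hence real, so again $e(g) \le 2$.

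Now assume $s \neq 1 \neq u$. Because the algebraic group $\mathbf{G}$ of type $G_2$ is simply connected, a theorem of Steinberg guarantees that $C_{\mathbf{G}}(s)$ is connected reductive, so $C_G(s) = C_{\mathbf{G}}(s)^F$ is the group of $F$-fixed points of a connected reductive $F$-stable subgroup of $\mathbf{G}$. Its root system is an $F$-stable proper subsystem of $G_2$, and is nonempty because $C_G(s)$ contains the nontrivial unipotent element $u$. The only such subsystems are $A_2$ (long roots), $\tilde A_2$ (short roots) and $A_1 + \tilde A_1$; in the last case $F$ cannot exchange the two factors, as they contain roots of distinct lengths. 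Consequently $C_G(s)$ is isomorphic to $\SL_3(q)$, $\SU_3(q)$, or $\SL_2(q) \times \SL_2(q)$, each factor being a classical group in even characteristic.

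In every one of these possibilities each component of $C_G(s)$ appears in the list of Lemma \ref{ev1} with defining characteristic $2$. Hence the unipotent elements of each component are rational, and in particular $u \sim u^{-1}$ inside $C_G(s)$. This gives $e_{C_G(s)}(u) \le 2$. Now apply Lemma \ref{xy8} with $x = s$, $y = u$, $m = e_{C_G(s)}(u) \le 2$, and $n = e_G(s^2)$. Since $s^2$ is semisimple, it is real in $G$ by Lemma \ref{sn2}, whence $n \le 2$, and therefore $e(g) = e_G(su) \le nm \le 4$.

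The main obstacle is the mixed case $s \neq 1 \neq u$: one must pin down the isomorphism type of $C_G(s)$ and show that $u$ is real inside this centralizer. What makes this manageable is that $\mathbf{G}$ is simply connected, so the centralizer is automatically connected, and the classification of reductive rank-$2$ subsystems of $G_2$ is elementary; in every case the resulting $F$-rational group is a (central) product of classical groups in even characteristic, to which Lemma \ref{ev1} directly applies.
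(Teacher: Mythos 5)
Your argument takes a genuinely different and more structural route than the paper, which instead inspects the Enomoto--Yamada character table of $G_2(2^n)$, observes that only the two classes $B_2(1)$, $B_2(2)$ carry non-real character values, and for a representative $g$ of those classes shows directly that $g^2$ is real, whence $e(g)\leq 4$. Your reduction via $g=su$, reality of $s$ (Lemma \ref{sn2}), reality of $u$ inside $C_G(s)$, and Lemma \ref{xy8} is exactly the template the paper uses elsewhere (Lemma \ref{ee7}, Theorem \ref{sf4}); it avoids the character table at the cost of needing the semisimple centralizer structure of $G_2$ in characteristic $2$.

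That centralizer analysis, however, is incorrect as written, and this leaves a (fortunately harmless) hole. First, a nonempty proper closed subsystem of $G_2$ may have rank $1$: centralizers of type $A_1$ and $\tilde A_1$, i.e. $C_G(s)\cong \SL_2(q)\times Z$ with $Z$ cyclic of order $q\mp 1$, do occur for $q$ even, and these cases are simply missing from your list. Second, the short-root $A_2$ is not closed under addition inside $G_2$ (a sum of two short roots can be a long root), so it cannot arise as $\{\alpha : \alpha(s)=1\}$; the centralizer $\SU_3(q)$ in fact comes from the long-root $A_2$ with $F$ acting as a twisted Frobenius. Third, in characteristic $2$ the type $A_1+\tilde A_1$ cannot occur for semisimple $s\neq 1$: with simple roots $\alpha$ (short), $\beta$ (long), the conditions $\beta(s)=1$ and $(2\alpha+\beta)(s)=1$ force $\alpha(s)^2=1$, hence $\alpha(s)=1$ and $s=1$. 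None of this sinks the proof, because in every centralizer that actually occurs the element $u$ lies in a component isomorphic to $\SL_2(q)$, $\SL_3(q)$ or $\SU_3(q)$ with $q$ even, where it is real (for $\SL_2(q)$, $q$ even, the nontrivial unipotent elements are involutions forming a single class; for the others invoke Lemma \ref{ev1}); so $e_{C_G(s)}(u)\leq 2$ and the final application of Lemma \ref{xy8} with $e_G(s^2)\leq 2$ goes through. But as stated your case analysis both asserts cases that cannot occur and omits cases that do, so it must be corrected before the proof is complete.
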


\begin{proof} The character table of $G$ is available in \cite{EY}. Table  \ref{tableG2}
below is a part of the character table given there. The irredicible characters of $G$ are partitioned as sets labeled in the 1-st column, the last column contains the number of the characters in the corresponding set. We only need the character values at the non-real conjugacy classes. These are exactly the classes whose charater values are not all   real numbers.

By inspection of \cite{EY} one observes that the irreducible character values are not real only at two conjugacy classes, denoted by  $B_2(1)$ and $ B_2(2)$. So $e(g)=2$ unless $g\in B_2(1)\cup B_2(2)$.


We do the calculation for $g\in B_2(1)$, since the one for $B_2(2)$ is similar. The characters that take nonzero value on $g\in B_2(1)$ are  in Table  \ref{tableG2} where $q=2^n$, $ \varepsilon=(-1)^n$, $\omega$ is a primitive $3$-root of unity so its absolute value equals 1 and $\omega_k = \omega^k+\omega^{-k}$. So the absolute value of $\omega_k$ is at most $2$.  We split the following sum grouping the first 6 nontrivial character lines and the last 8 lines above (type $\theta$ and type $\chi$). We have

$$\left| \sum_{1 \neq \chi \in \mbox{Irr}(G)} \frac{\chi(g)^3}{\chi(1)} \right| \leq \sum_{1 \neq \chi \in \mbox{Irr}(G)} \frac{|\chi(g)|^3}{\chi(1)} \leq \frac{7(4q/3)^3}{q^5/6} + \frac{8(3q^2/4+5q/2)}{q^5/2} < 1$$
for $q \geq 16$. The calculation for small $q$ can be done by hand.
\end{proof}

\begin{theor} \label{sf4} 
For $q$ even let  $G\in\{\Sz(q),F_4(q),{}^2F_4(q)\}$ be a simple group. Then $e(G)\leq 3,4,4$, respectively.\end{theor}

\begin{table}[ht]
  \centering
	  \caption{Nonzero irreducible $G_2(q)$ character values for class $B_2(1)$}\label{tableG2}
  {\fontsize{9}{11}\selectfont
	\begin{tabular}{|c|c|c|c|}
      \hline
			$\chi$ & $\chi(g)$ & $\chi(1)$ & Number of characters \\ \hline \hline
$\theta_0$ & 1 & 1 & 1 \\ \hline
$\theta_1$ & $(-1/3)(q-1)$ & $(1/6)q(q+1)^2(q^2+q+1)$ & 1 \\ \hline
$\theta_1'$ & $(-1/3)(q-1)$ & $(1/6)q(q-1)^2(q^2-q+1)$ & 1 \\ \hline
$\theta_3$ & $(1/3)(q-1)$ & $(1/3)q(q^4+q^2+1)$ & 1 \\ \hline
$\theta_4$ & $(1/3)(q+2)$ & $(1/3)q(q^4+q^2+1)$ & 1 \\ \hline
$\theta_8$ & 1 & $q(q+\varepsilon)(q^3+\varepsilon)$ & 1 \\ \hline
$\theta_9(k)$ & $(1/3)(q-1)+q \omega^k$ & $(1/3)q(q^2-1)^2$ & 2 \\ \hline \hline
$\chi_1(k)$ & 1 & $(q^3+1)(q^2+q+1)$ & $(q-3-\varepsilon)/2$ \\ \hline
$\chi_1'(k)$ & $-1$ & $(q^3-1)(q^2-q+1)$ & $(q-1+\varepsilon)/2$ \\ \hline
$\chi_2(k)$ & 1 & $q(q^3+1)(q^2+q+1)$ & $(q-3-\varepsilon)/2$ \\ \hline
$\chi_2'(k)$ & -1 & $q(q^3-1)(q^2-q+1)$ & $(q-1+\varepsilon)/2$ \\ \hline
$\chi_3(k)$ & $\omega_k$ & $(q^3+1)(q^2+q+1)$ & $(q-2)/2$ \\ \hline
$\chi_5(k,l)$ & $\omega_{k+l}$ & $(q+1)(q^3+1)(q^2+q+1)$ & $(q-3-\varepsilon)(q-5+\varepsilon)/12$ \\ \hline
$\chi_6'(k)$ & $-\omega_k$ & $q^6-1$ & $q(q-2)/4$ \\ \hline
$\chi_7(k)$ & $\omega_k$ & $(q-1)(q^2-1)(q^3+1)$ & $(q-\varepsilon)(q+1+\varepsilon)/6$ \\
			\hline
\end{tabular}
  }
\end{table}

\begin{proof}For the Suzuki groups $G={\rm Sz} (2^{2m+1})$, $m>0$, we have $e(G)=\cn(G)=3$ by \cite[Theorem 4.1, p.239]{AH} as  $G$ contains nonreal elements.

Suppose now that $G \in \{F_4(q),{}^2F_4(q)\}$. By \cite[Proposition  10.11]{TZ04}, $G$ has property $\frac{1}{2}\mathcal{SR}^*$. This means (see \cite[p. 333]{TZ04}) that the unipotent elements $u \in C_G(s)$ are semirational. Indeed, the property $\mathcal{SR}^*$ means that  $u,u^m$ are conjugate in $C_G(s)$ whenever $(m,|u|)=1$ and $4|(m-1)$. This is equivalent to saying that $u^m$ is conjugate to $u$ or to $u^{-1}$ in $C_G(s)$. In particular, every unipotent element $u$ of $G$ is semirational in $G$. So, by Corollary \ref{cor881}, $e(u)\leq 4$. 

Suppose that $g\in G$ is not unipotent, and write $g=su$, where $s$ is semisimple, $u$ is unipotent and $su=us$. By Lemma \ref{sn2}, $s$ is real. So  $e(g)=e(s)=2$ if $u=1$.

Suppose that $s\neq 1,u\neq 1$. Suppose that $s\neq 1,u\neq 1$. As observed in \cite[pp. 381-382]{TZ04}, either $u$ is real in $C_G(s)$ and then $e(g) \leq 4$ by Lemma \ref{xy8} or $G={}^2F_4(q)$ and $C_G(s)\cong H\times C$, where $H\cong \Sz(q)$ and $C$ is a cyclic group. 
Thus, if $G=F_4(q)$ then $e(G)\leq 4$.

Suppose that $G={}^2F_4(q)$. 
We  prove that $g^2$ is real, which implies the result in this case.

  Shinoda \cite{Shi} describes the conjugacy classes and element centralizers in $G$. The non-trivial non-regular semisimple elements of $G$, in particular $s$, are of type $t_i$ with $i=1,2,4,5,7,9,$ in notation of \cite{Shi}. 

Assume first that $3$ divides $|s|$. Then $g\in M$, where $M$ is a subgroup of $G$ described in  \cite[Proposition 1.2, items (1),(2)]{Ma}. In item (2) $M=(Z_{q^2+1} \times Z_{q^2+1}):\GL_2(3)$, the
elements of order $2^m3^k$ with $m,k>1$ are in $\GL_2(3)$, and hence  real (as $q^2+1$ is coprime to $6$). 
In item (1) $M=\SU_3(q).2$ so $g^2\in \SU_3(q)$. 

Suppose first that $s \in Z(\SU_3(q))$. Then $u\in \SU_3(q)$ so $|u^2|\leq 2$.
If $u^2=1$ then $g^2$ is semisimple, hence real. Suppose that $u^2\neq 1$. 
One easily observes that the involutions are conjugate in $\SU_3(q)$. 
In addition,  $s$ is real in $M$ (see the proof of Proposition 1.2 in \cite{Ma}). 
Then $xsx^{-1} = s^{-1}$ for some $x\in M$. Then $xg^2x^{-1} = s^{-2} \cdot xu^2x^{-1}$ and $xu^2x^{-1}$ is an involution. So $yxu^2x^{-1}y^{-1} = u^{-2}$ for some $y \in \SU_3(q)$. Then $yxg^2x^{-1}y^{-1}=s^{-2}u^{-2}$, that is $g^2$ is real.

Suppose that $s\notin Z(\SU_3(q))$. Then $s^2$ is not scalar. If $u^2=1$ then $g^2$ is semisimple and hence real.
Assume that $u^2\neq 1$. As $u^2,s^2\in \SU_3(q)$, we have  $g^2\in \SU_3(q)$.
The element $g^2$ is conjugate in $\GL_3(q^2)$ to a matrix $\diag(a,a,a^{-2})$ for some $a\in \mathbb{F}_{q^2}$, $a\neq a^{-2}$. (The latter implies $q>2$.) Let $W$ be the underlying space for $\SU_3(q)$, and let $W_0$ be the $a^{-2}$-eigenspace of $g^2$ on $W$. Then $\dim W_0=1$.

We claim that $g^2$ acts on $W/W_0^{\perp}$ as the multiplication by $a^{2q}$. Indeed, if $w \in W_0$, $v \in W$ then $w=a^2a^{-2}w=a^2 g^2w$ so since $a^{q^2}=a$, and $\sigma(a)=a^q$, where $\sigma$ is the Galois automorphism of $\mathbb{F}_{q^2}/\mathbb{F}_{q}$ of order 2. We have
\begin{align*}
(w,g^2v-a^{2q}v) 
& = (w,g^2v)-(w,a^{2q}v) = (a^2g^2w,g^2v) - \sigma(a^{2q}) (w,v) \\
& = a^2(g^2 w,g^2v) - a^{2q^2} (w,v) = a^2(w,v) - a^2(w,v) = 0.
\end{align*}
and hence $g^2v-a^{2q}v \in W_0^{\perp}$, proving the claim. Then $a^{2q}=a$ and $a^{2q-1}=1$. Since $a^{q^2-1}=1$, we deduce that $1 < (2q-1,q^2-1) = (2q-1,q+1) \leq 3$, therefore $(2q-1,q^2-1)=3$ and hence $a^3=1$, a contradiction.

Next suppose that $(|s|,3)=1$. Then $\langle s\rangle$ contains an element $h$, say, of a prime order $r>3$.  Then $g\in M$, where $M$ is one of the subgroups described in \cite[Proposition 1.3, items $(1)-(5)$]{Ma} (as $u\neq 1$). In these items 
$M= N_G(\langle h\rangle )$, and $h$ is real in $M$. In items (2),(3) we have $u^2=1$,
so $g^2$ is semisimple, and hence real.  In items (1),(4),(5) we have 
$M = H\times T$, where $H \cong \Sz(q)$ and $T$ has a cyclic normal subgroup of odd order and index $2$ or $4$.
If $u$ is an involution then $g^2$ is semisimple and $e(g)\leq 4$.
Otherwise, $|u|=4$ and $u^2$ is an involution. Then $g^2 = s^2u^2$,
where  $u^2 = u_1u_2$,  $u_1\in T^2$, $u_2\in H$, 
and $u_1s = su_1$ so the group $\langle s,u_1\rangle$ is cyclic. (In item (1) $u_1=1$.) 
Note that the involutions of $\Sz(q)$ are conjugate \cite[Proposition 7]{Su} as well as those of $T^2$ if $|T^2|$ is even.

Suppose first that $s\in T$.   As $s$ is real, we have  $xsx ^{-1}=s^{-1}$ for some $x\in T$. 
If $u_1=1$ then $u^2$ is an involution of $H$ hence $g^2$ is real. If $u_1\neq 1$ then 
 $T$ has an element of order $4$, so $|T^2|$ is even and
$T^2=\langle u_1,T^4\rangle$ so $s\in Z(T^2)$.
If $|T^2|$ is even then  the involutions of $T^2$ are conjugate in $T^2$ and
the conjugation elements commute with $s$. Therefore, the involutions of 
$C_M(s)$ are conjugate in  $C_M(s)$. This again implies that $g^2$ is real.

Suppose that $s\not\in T$. Then $s=s_1s_2$ for some $s_1\in T,1\neq s_2\in H$. Then $C_H(s_2)$ consists of odd order (hence semisimple) elements, see \cite[Prop 1,2]{Su}. So $u\in T$. As above, we conclude that $g^2$ is real.
\end{proof}

The group $G={}^2F_4(2)$ is not simple, but the derived group  ${}^2F_4(2)'$ is simple. For this group we have:

\begin{lemma}\label{2sf4} Let $G={}^2F_4(2)'$. Then $e(G)= 3$.\end{lemma}

\begin{proof} 
By \cite[p. 75]{at}, if $g\in G$ and $|g|\neq 8,16$ then $g$ is real so $e(g)=2$. 
More precisely, if $g$ is not real then $g$ is in one of the classes $8A$, $8B$, $16A$, $16B$, $16C$, $16D$. Using \cite[p. 75]{at}, it is easy to check that $\sum_{\chi \in \mbox{Irr}(G)} \chi(g)^3/\chi(1) \neq 0$, so $e(g)=3$.
\end{proof}
 
We now discuss the groups of type $E_6$, $E_7$, $E_8$. 

For every quasisimple group of Lie type $G$ in defining characteristic $p > 0$ let $U$ be a Sylow $p$-subgroup of $G$ and $B = N_G(U)$. Then $B = HU$, where $H \cap U = \{1\}$. By \cite[Proposition 2.5.5]{C}, there  exists a conjugate $U^-$ of $U$ such that $N_G(U) \cap N_G(U^-) = H$. By \cite[Theorem H, p. 344]{egh}, 
for every non-central conjugacy class $C$ of $G$ and every $h\in H$ there exists 
$u_1\in U^-$, $u_2\in U$ such that $u_1 h u_2 \in C$. In particular, $C \cap  U^-U$ is not empty. 

\begin{lemma} \label{e62}
Let $G\in \{E_6(q), {^2}E_6(q)\}$ be a simple group, and $p|q$ a prime. Then $e(G)\leq d$, where $d=p$ for $p>11$, and $16,27,25,49,121$ for  $p=2,3,5,7,11$, respectively. \end{lemma}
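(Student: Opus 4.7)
The plan is to invoke Lemma~\ref{5eg}, which yields $e(G)\leq \exp(U)$ for a Sylow $p$-subgroup $U$ of $G$ and thereby reduces the problem to a $p$-group exponent computation. Since $p\mid q$ for $G$ of type $E_6$ or ${}^2E_6$, the centre of the simply connected cover of $G$ has order $(3,q\mp 1)$, coprime to $p$; hence a Sylow $p$-subgroup of the simple quotient is isomorphic to a Sylow $p$-subgroup of the simply connected cover, and it suffices to bound the exponent of the latter.

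Next, every element of $U$ is unipotent in the ambient simple algebraic group $\mathbf{G}$, so $\exp(U)$ coincides with the largest order of a unipotent element of $G$. By a theorem of Testerman, the order of a regular unipotent element of a simple algebraic group of Coxeter number $h$ in characteristic $p$ equals $p^{\lceil\log_p h\rceil}$, and this is the maximum unipotent element order in the finite group. For both $E_6$ and ${}^2E_6$ the relevant Coxeter number is $h=12$: in the twisted case one uses that ${}^2E_6(q)$ embeds in $E_6(q^2)$ and that the Steinberg endomorphism fixes a regular unipotent element of $E_6(\bar{\mathbb F}_q)$, so the regular unipotent of ${}^2E_6(q)$ has the same order formula. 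Consequently $\exp(U)=p^{\lceil\log_p 12\rceil}$ in either case.

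The final step is the arithmetic: $\lceil\log_p 12\rceil$ equals $4,3,2,2,2$ for $p=2,3,5,7,11$ and equals $1$ for $p\geq 13$, giving the bounds $16,27,25,49,121,p$ listed in the statement. The only non-routine step is the appeal to Testerman's formula, together with the fact that the regular unipotent realises the maximal order among unipotent elements (implicit in the $A_1$-type overgroup analysis), and the verification that the fixed-point construction yielding ${}^2E_6(q)$ preserves a regular unipotent of the ambient algebraic group. I do not anticipate any conceptual obstacle beyond these bibliographic checks.
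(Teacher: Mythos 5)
Your proposal is correct and follows essentially the same route as the paper: both reduce via Lemma \ref{5eg} to bounding the exponent of a Sylow $p$-subgroup, and both arrive at the same values $p^{\lceil\log_p 12\rceil}$. The only difference is bibliographic — the paper reads the Sylow exponent off Lawther's tables of Jordan block sizes, whereas you derive it from Testerman's Coxeter-number formula for the order of a regular unipotent element (and note that for the upper bound you do not even need the regular unipotent class to survive the twisting that produces ${}^2E_6(q)$; every unipotent element of the finite group is unipotent in the ambient algebraic group of type $E_6$, which already gives $\exp(U)\leq p^{\lceil\log_p 12\rceil}$).
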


\begin{proof} Note that the exponent of a Sylow $p$-subgroup of $G$ equals $p$
for $p>11$, $p^2$ for $3<p\leq 11$, and $16,27$ for $p=2,3$  \cite{Lw}. So the result follows from  Lemma \ref{5eg}.\end{proof}

We are indebted to Frank L\"ubeck (University of Aachen)
for the following fact (obtained by computations with the computer package ``CHEVIE''):

\begin{lemma} \label{613} 
Let $G \in \{E_6(q), {^2}E_6(q)\}$. Then $G$ contains a regular semisimple element of order $13$; if $(q,6)=1$ then G contains a regular semisimple element of order $12$.
\end{lemma}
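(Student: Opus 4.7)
The plan is to apply the standard parametrization of maximal tori in finite groups of Lie type: conjugacy classes of maximal tori of $\mathbf{G}^F$ correspond to $F$-conjugacy classes of Weyl group elements, and the order of the torus $T_w$ attached to $w$ is obtained by substituting $q$ in the characteristic polynomial of $wF$ acting on the character lattice. For $\mathbf{G}$ of type $E_6$, the cyclotomic factors $\Phi_d(q)$ that appear in torus orders have $d \in \{1,2,3,4,5,6,8,9,10,12\}$. In particular, $E_6(q)$ possesses a Coxeter-type torus of order $\Phi_3(q)\Phi_{12}(q) = (q^2+q+1)(q^4-q^2+1)$ (reflecting the exponents $1,4,5,7,8,11$ of $E_6$), and ${}^2E_6(q)$ has the Ennola-dual Coxeter-type torus of order $\Phi_6(q)\Phi_{12}(q) = (q^2-q+1)(q^4-q^2+1)$.

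For the order 13 assertion (where $p \neq 13$ is tacitly assumed, since otherwise 13-elements are unipotent), the multiplicative order of $q$ modulo $13$ is a divisor of $12$, so it lies in $\{1,2,3,4,6,12\}$. For each such value $d$, a table of maximal torus orders for $E_6(q)$ and for ${}^2E_6(q)$ displays a torus whose order is divisible by $\Phi_d(q)$, and hence by $13$; pick any element $s$ of order $13$ in such a torus. For the order $12$ assertion with $(q,6)=1$, an analogous search of the torus order list produces, for each residue class of $q$ modulo $12$ coprime to $6$, a maximal torus whose order is divisible by $12$; pick any $s$ of order $12$.

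The final step is to confirm that the chosen $s$ is regular, i.e.\ $C_{\mathbf{G}}(s) = T$. Since $13$ (and, for $(q,6)=1$, also $12$) is coprime to the torsion primes $\{2,3\}$ of $\mathbf{G}$, the centralizer $C_{\mathbf{G}}(s)$ is automatically connected reductive, so regularity is equivalent to $\alpha(s) \neq 1$ for every root $\alpha$ of $E_6$. For the Coxeter torus this is immediate because the torus is not contained in any proper Levi subgroup of $\mathbf{G}$; for the other tori, the verification is a finite check of the $72$ root values at the explicit element, and by counting one can always arrange the element to avoid every root kernel. The principal obstacle is managing these root-kernel verifications uniformly across all congruence classes of $q$ modulo $12$ and modulo $13$ — a bookkeeping task that is conceptually routine but combinatorially heavy, which is exactly what the cited CHEVI computation of Lübeck discharges.
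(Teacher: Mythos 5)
The paper does not actually prove this lemma: it is stated as a fact communicated by Frank L\"ubeck and obtained by a computation with the package CHEVI, so there is no argument in the text to compare against. Your proposal sets up the standard and correct frame (maximal tori of $\mathbf{G}^F$ indexed by $F$-classes in the Weyl group, torus orders as products of cyclotomic polynomials $\Phi_d(q)$, regularity as $\alpha(s)\neq 1$ for all $72$ roots), but, as you yourself say in the last sentence, the decisive step --- exhibiting in \emph{every} congruence class of $q$ an element of order $13$ (resp.\ $12$) avoiding all root kernels --- is discharged to exactly the same CHEVI computation, so in substance your proof and the paper's are the same. Two of your own assertions deserve correction. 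First, ``by counting one can always arrange the element to avoid every root kernel'' is not routine: in the hardest case $q\equiv 1\pmod{13}$ one works in $(\mathbb{Z}/13)^6$ inside the split torus, and the naive union bound $72\cdot 13^5$ exceeds $13^6$, so it proves nothing; what saves the day is the finite-field evaluation of the characteristic polynomial of the $E_6$ reflection arrangement, $\prod_i(13-e_i)=(13-1)(13-4)(13-5)(13-7)(13-8)(13-11)=51840>0$ (legitimate since $13$ is not a torsion prime), or else the explicit computation. Second, $12$ is \emph{not} coprime to the torsion primes $\{2,3\}$ of $E_6$, so your stated reason for connectedness of $C_{\mathbf{G}}(s)$ fails for the order-$12$ element; one should instead invoke Steinberg's theorem for the simply connected cover. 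Finally, note that the order-$13$ claim tacitly requires $13\nmid q$ (you flag this; the paper does not), which matters because the lemma is quoted in Theorem \ref{666} without that hypothesis.
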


\begin{theor} \label{666} 
Let $G\in \{E_6(q), {^2}E_6(q)\}$, $q=p^k$ for a prime $p$. Then $e(G) \leq 16,27,25$ for $p=2,3,5$, respectively,  $e(G)\leq p$ for $13 \leq p \leq 31$ and $e(G)\leq 36$ for $p=7,11$ and $p>31$. If $s\in G$ is a semisimple element, then $e(s)\leq 12$ if $(q,6)=1$ and $e(s) \leq 13$ otherwise. 
\end{theor}

\begin{proof} For $p\leq 31$ see Lemma \ref{e62}.  By Lemmas \ref{Gow} and \ref{613}, $s$ is a product of two elements of order $13$, and even $12$ if $(q,6)=1$. Then $e(s)\leq 13,12$, respectively.  

In general, let $g=su\in G$ be the Jordan decomposition of $g$, and let $C$ be the conjugacy class of $G$ containing $g$. By Lemma \ref{u5n},
the generators of $\langle u\rangle$ lie in at most two conjugacy classes of $C_G(s)$. Therefore, $u$ is either real or semirational in $C_G(s)$, hence $e_{C_G(s)}(u)\leq 3$. By Corollary \ref{cor881},  $s^m\in C^m$ for $m=2$ or $3$. Now we apply Lemma \ref{xy8}. If $(q,6)=1$ then $e(s) \leq 12$, so $e(g) \leq 36$. If $(q,6) \neq 1$ then the claim follows from Lemma \ref{e62}.\end{proof}

\begin{lemma}\label{e88}  Let $\mathbf{G}$ be the simple algebraic group of type $ E_8$ in characteristic $p>0$ and let $1\neq s\in  \mathbf{G}$  be a semisimple element.

$(1)$ The group $\mathbf{C}:=C_{\mathbf{G}}(s)$ is connected reductive, $\mathbf{C}=Z(\mathbf{C})\cdot \mathbf{G}_1\cdots \mathbf{G}_m$ for some simple algebraic groups $\mathbf{G}_1\,\ldots ,\mathbf{G}_m$;

$(2)$ the groups $\mathbf{G}_i$, $1\leq i\leq m$, are either of exceptional type distinct from $E_8$, or of classical groups of rank at most $8$.\end{lemma}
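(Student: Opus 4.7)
The plan is to prove (1) by invoking Steinberg's connectedness theorem together with the standard structure theorem for connected reductive groups, and then to deduce (2) from a simple rank count combined with the fact that $E_8$ has trivial center.

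For (1), I would first observe that for the root system $E_8$ the fundamental group is trivial, so the simple algebraic group $\mathbf{G}$ of this type is simultaneously simply connected and adjoint. By Steinberg's theorem on centralizers of semisimple elements in a simply connected semisimple algebraic group (see e.g.\ \cite[Theorem 14.16]{DM} or the discussion at \cite[\S 2]{Hm}), $\mathbf{C}=C_{\mathbf{G}}(s)$ is connected. It is moreover reductive, being the centralizer of a semisimple element. The standard structure theorem for connected reductive groups then gives $\mathbf{C}=Z(\mathbf{C})^\circ\cdot [\mathbf{C},\mathbf{C}]$ as an almost direct product, and $[\mathbf{C},\mathbf{C}]$ is a connected semisimple group and hence decomposes as an almost direct product $\mathbf{G}_1\cdots \mathbf{G}_m$ of simple algebraic groups.

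For (2), the key point is that any maximal torus $\mathbf{T}$ of $\mathbf{G}$ containing $s$ is contained in $\mathbf{C}$, and is necessarily a maximal torus of $\mathbf{C}$ as well, so $\mathrm{rk}(\mathbf{C})=\mathrm{rk}(\mathbf{G})=8$. Consequently
\[
\dim Z(\mathbf{C})^\circ \;+\; \sum_{i=1}^m \mathrm{rk}(\mathbf{G}_i)\;=\;8,
\]
and in particular $\mathrm{rk}(\mathbf{G}_i)\leq 8$ for each $i$. It then remains only to exclude the case where some $\mathbf{G}_i$ is itself of type $E_8$. If this occurred, then $\mathrm{rk}(\mathbf{G}_i)=8$ would force $m=1$ and $Z(\mathbf{C})^\circ$ trivial, so $\mathbf{C}=\mathbf{G}_1$ would be a closed connected simple subgroup of $\mathbf{G}$ with $\dim \mathbf{G}_1=\dim \mathbf{G}$; since $\mathbf{G}$ is irreducible, this forces $\mathbf{C}=\mathbf{G}$. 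Hence $s\in Z(\mathbf{G})=1$, contradicting $s\neq 1$.

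There is no real obstacle in this argument: the content lies entirely in citing Steinberg's connectedness theorem correctly (which requires simple connectedness of $\mathbf{G}$, holding automatically for $E_8$) and in noting that $E_8$ has trivial center. The rank-at-most-$8$ bound for the simple factors is immediate from the maximal-torus observation, and no appeal to the Borel--de Siebenthal classification of maximal-rank subsystems is needed to obtain the qualitative statement.
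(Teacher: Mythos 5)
Your proof is correct and follows essentially the same route as the paper's: connectedness of $\mathbf{C}$ via Steinberg's theorem (using simple connectedness of $E_8$), reductivity, the standard decomposition, a rank count giving $\mathrm{rk}(\mathbf{G}_i)\leq 8$, and exclusion of an $E_8$ factor by deriving $s=1$ from the triviality of the relevant center. The only cosmetic difference is that you reach the final contradiction through $\mathbf{C}=\mathbf{G}$ and $Z(\mathbf{G})=1$ by a dimension argument, while the paper argues that $\mathbf{C}$ itself would be simple of type $E_8$ with $Z(\mathbf{C})=1$; these are interchangeable.
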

 
\begin{proof} (1) $\mathbf{C}$ is reductive (that is, it contains no non-trivial unipotent normal subgroup), see \cite[3.9 on p. 197]{Sbo}, and connected  by \cite[Theorem 14.16(i) or 14.20]{MTe}, since the simple algebraic group of type $ E_8$ is simply connected. So the claim is a well known fact of general theory of algebraic groups, see for instance \cite[\S 1.8, pp.16-17]{C}.
 
(2) A maximal torus of $\mathbf{C}$ is the product of some maximal tori of the multiples $Z(\mathbf{C})$, $\mathbf{G}_1$, $\ldots$, $\mathbf{G}_m$, and the center of a simple algebraic group is a finite group. Therefore, the rank of $\mathbf{C}$ equals the sum of the ranks
of the multiples, and hence the rank of every group $\mathbf{G}_i$ is at most $8$. 
If some $\mathbf{G}_i$ is of type $E_8$
then $m=1$ and the rank of 
$ Z(\mathbf{C})$ equals $0$; as $\mathbf{C}$ is connected,  $ Z(\mathbf{C})$ is contained in 
$\mathbf{G}_1$, so $\mathbf{C}$ is a simple algebraic group of type $E_8$. 
In this case $Z(\mathbf{C})=1$, hence $s=1$, a contradiction. 
\end{proof}

\begin{theor} \label{ue67} If $G$ is of type $E_7(q)$ or  $E_8(q)$ and $q$ is even then $e(G)\leq 4$, $e(G)\leq 8$ respectively. If $G=E_8(q)$ and $5$ divides $q$, then $e(G) \leq 6$. \end{theor}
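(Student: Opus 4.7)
My plan is to write any $g \in G$ as $g = su$ with $s, u \in \langle g \rangle$, $s$ semisimple, $u$ unipotent, $su = us$, then apply Lemma \ref{xy8} with $x = s$, $y = u$ to obtain $e_G(g) \leq e_G(s^m) \cdot m$ where $m = e_{C_G(s)}(u)$. Since neither $E_7(q)$ nor $E_8(q)$ appears in the exceptional list of Lemma \ref{sn2}, every power $s^m$ is a real semisimple element of $G$, so $e_G(s^m) \leq 2$. Hence $e_G(g) \leq 2m$, and the task reduces to bounding $m$; the purely unipotent case $s = 1$ is handled directly by Theorem \ref{u55}.

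Assuming $s \neq 1$, I would invoke the structure of $C_{\mathbf{G}}(s)$: since $\mathbf{G}$ is simply connected, $C_{\mathbf{G}}(s)$ is connected reductive (Lemma \ref{e88}(1) for $E_8$, and the same argument works for $E_7$), and its derived subgroup is a central product $\mathbf{G}_1 \cdots \mathbf{G}_r$ of simple algebraic groups. Because $E_7$ and $E_8$ are simply laced, so is every sub-root system, hence every $\mathbf{G}_i$ is of simply-laced type: of type $A_n$, $D_n$, or $E_6$ (all of rank $\leq 7$) in the $E_7$ case, and additionally possibly $A_8$, $D_8$, $E_7$ in the $E_8$ case. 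Taking $F$-fixed points, each finite simple factor $G_i$ of $C_G(s)$ is a Lie-type group over some $\mathbb{F}_{q^k}$ (possibly a twisted form) of one of these types. The unipotent element $u$ decomposes uniquely as a commuting product $u = u_1 \cdots u_r$ with $u_i \in G_i$, and concatenating sequences of conjugates multiplying to $1$ yields the bound $e_{C_G(s)}(u) \leq \operatorname{lcm}_i e_{G_i}(u_i)$.

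The three cases then diverge in the bound on each $e_{G_i}(u_i)$. For $E_7(q)$ with $q$ even, every $G_i$ appears in the list of Lemma \ref{ev1}, so $u_i$ is rational, hence real, in $G_i$, giving $e_{G_i}(u_i) \leq 2$, whence $m \leq 2$ and $e_G(g) \leq 4$. For $E_8(q)$ with $q$ even, the extra possibility $G_i \cong E_7(q^k)$ gives $e_{G_i}(u_i) \leq 4$ by Theorem \ref{u55}(3), while the classical and $E_6$-type factors still give $\leq 2$ by Lemma \ref{ev1}; hence $m \leq 4$ and $e_G(g) \leq 8$. For $E_8(q)$ with $5 \mid q$, the prime $p = 5$ is almost good for every type distinct from $E_8$, so Lemma \ref{u5n} applies inside each $G_i$ (with trivial semisimple part), forcing $u_i$ to be semirational in $G_i$; since $|u_i|$ is a power of the odd prime $5$, Corollary \ref{cor881} gives $e_{G_i}(u_i) \leq 3$, so $m \leq 3$ and $e_G(g) \leq 6$. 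In each of the three cases the purely unipotent subcase $s = 1$ is covered by Theorem \ref{u55}(3), (4), (1) respectively. The main obstacle I anticipate is the careful identification of the simple factors of $C_G(s)$ as finite Lie-type groups, including their twisted forms and field extensions, together with verifying that these factors truly lie in the hypothesis lists of Lemmas \ref{ev1} and \ref{u5n}; the secondary technical point is justifying the concatenation bound $e_{C_G(s)}(u) \leq \operatorname{lcm}_i e_{G_i}(u_i)$.
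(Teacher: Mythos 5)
Your overall reduction (write $g=su$, bound $e_{C_G(s)}(u)$, use reality of $s$ via Lemma \ref{sn2} and Lemma \ref{xy8}) matches the shape of the paper's argument for the mixed case, and your factor-by-factor analysis of $C_G(s)$ is essentially what the paper does. But there is a genuine circularity at the core of your proposal: you dispose of the purely unipotent case $s=1$ by citing Theorem \ref{u55}(3),(4),(1), and you also use Theorem \ref{u55}(3) to bound $e_{G_i}(u_i)\leq 4$ when a factor $G_i\cong E_7(q^k)$ occurs inside $C_{E_8(q)}(s)$. In the paper, however, items (1), (3) and (4) of Theorem \ref{u55} are themselves \emph{deduced from} Theorem \ref{ue67} (see the proof of Theorem \ref{u55} in the final section). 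So your argument assumes exactly the statement it is trying to prove in the cases that are actually hard: unipotent elements of $E_7(q)$ and $E_8(q)$ in characteristic $2$ are not covered by Lemma \ref{ev1} or Lemma \ref{u5n} (both exclude $p=2$ for these types), and no other result you invoke supplies the bound.

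The missing content is precisely the paper's self-contained treatment of unipotent elements. For $q$ even the paper embeds $u$ in a maximal parabolic $P=LU$ chosen so that $(P/U)'\cong E_6(q)$ (for $E_7$) or $\Omega^+_{14}(\mathbb{F}_q)$ (for $E_8$); unipotent elements of these Levi quotients are real by \cite[Theorem 1.8]{TZ04}, so a product of two conjugates of $u$ lands in $U$, and then one uses that $U$ is abelian of exponent $2$ (for $E_7$) or nilpotent of class $2$ and exponent $4$ (for $E_8$) to get $e(u)\leq 4$, resp.\ $8$. For $E_8(q)$ with $5\mid q$ the paper uses the direct computation $u^{25}=1$, $u\sim u^6$, hence $u\cdot(u^6)^4=u^{25}=1$, giving $e(u)\leq 5$. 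Without an argument of this kind (or some other independent bound on $e(u)$ for these unipotent classes), your proof does not close. A secondary, fixable point: the concatenation bound $e_{C_G(s)}(u)\leq\operatorname{lcm}_i e_{G_i}(u_i)$ needs a word about the center and about $C_G(s)$ possibly being larger than the central product of the $G_i$, but that is a matter of care rather than a gap.
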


\begin{proof} Let $p|q$ be a  prime. There is a maximal parabolic subgroup $P$ of 
$G$ such that, for $U=O_p(P)$, we have $(P/U)' \cong E_6(q)$ or $\Omega^+_{14}(\mathbb{F}_q)$, respectively, for $E_7(q)$, $E_8(q)$. Indeed, by general theory of parabolic subgroups of simple algebraic groups  \cite[\S 12.1]{MTe}, the parabolic subgroups of $G$ are determined by subsets of the nodes at the Dynkin diagram $D$ of $G$, and the semisimple component of $P/U$ corresponds to the subdiagram obtained by removing from $D$ the nodes determining $P$. In case of $E_7,E_8$ use the node 7, 1, respectively. Then the subdiagram obtained is of type $E_6, D_7$, respectively. 

Suppose that  $q$ is even. Then the universal group of 
type $D_7(q)$ has trivial center, so this is isomorphic to $\Omega^+_{14}(\mathbb{F}_q)$.  By \cite[Theorem 1.8(ii,iii)]{TZ04}, every unipotent element of $(P/U)'$ is real. 
 
Note that the index of $(P/U)'$ in $P/U$ is odd. For this observe that $P=LU$, a semidirect product, where $L\cong P/U$ is a Levi subgroup of $P$ (see \cite[Proposition 12.6.3]{MTe}), and $P=LU$.  
Let $\mathbf{G}$ be the simple algebraic group such that $G=\mathbf{G}^F$ for a Frobenius endomorphism $F$ of $\mathbf{G}$. By \cite[Proposition  26.1]{MTe},
$P=\mathbf{P}^F$ and $L= \mathbf{L}^F$ for a suitable parabolic subgroup $\mathbf{P}$
of $\mathbf{G}$ and a Levi subgroup $\mathbf{L}$ of $\mathbf{P}$.  
By \cite[Proposition  12.6]{MTe}, $\mathbf{L}$ is a connected reductive group.  Note that a simple algebraic group of type $E_8$
 is   of simply connected type \cite[Table 9.2]{MTe}. As  $q$ even,  a simple algebraic group of type  $E_7$ is isomorphic to that of simply connected type.  So we can assume this of simply connected type too.   Then   $\mathbf{L}'$  is   of simply connected type \cite[Proposition  12.14]{MTe}.  
Therefore, $(L')^F$ is generated by the unipotent elements of $L^F$  \cite[Theorem 24.15]{MTe}, and hence quasisimple by
\cite[Corollary 24.17]{MTe}, in fact simple as $Z((L')^F)=1$. In addition, $|L^F/(L')^F|$ is odd (as the unipotent elements are 2-elements). 

Let $g$ be a unipotent element in $P$. Then $gU \in (P/U)'$ by the above paragraph, since the order of $gU$ in  $P/U$ is a power of $2$. Since the unipotent elements of $(P/U)'$ are real, there exist $u_1,u_2\in L'U$ conjugate to $g$ in $L'U$ such that $u_1u_2\in U$. The structure of $U$ is known, in particular, the consecutive factors of the lower central series of $U$ are of exponent 2 \cite[Proposition  17.3]{MTe}.  
If  $G=E_7(q)$, then $U$ is abelian of exponent 2 \cite[Table 5]{Ko}, and hence $e(g)\leq 4$. If  $G=E_8(q)$, then $U$ is nilpotent of class 2 (see \cite[Table 5]{Ko}), and hence $U$ is of exponent at most $4$ (in fact, equal to $4$). Since $P$ contains a Sylow $2$-subgroup of $G$, we conclude that $e(g)\leq 8$ if $g\in G$ is unipotent.

Let $g=su=us\in G$ be the Jordan decomposition of $g$. 
If $s=1$ then $g$ is unipotent, so we are done. Suppose that $s \neq 1$. If $G=E_7(q)$, then every semisimple component of $C_{\mathbf{G}}(s)$ is either classical or of type $E_6$ and $u$ is real in $C_G(s)$, see the proof of Theorem 10.13 in \cite{TZ04}; as $s$ is real in $G$, we conclude that $e(g)\leq 4$. Assume now that $G=E_8(q)$ with $q$ even. Then every semisimple component of $C_G(s)$ is either classical or of type $E_6,E_7$. This follows from   \cite[Table 5.1]{LSS} as $C_G(s)$  contains a maximal torus of $G$. Since $u\in C_G(s)$, by the above the conjugacy class exponent of $u$ in  $ C_G(s)$
 is at most 4, and $s$ is real. Hence $e(g)\leq 8$. 

 Let $G=E_8(q)$ with $5$ dividing $q$, and let $g \in G$. Let $g=su$ be the Jordan decomposition of $g$.  

Suppose first that  $s=1$ so $g=u$ is unipotent. Then $u^{25}=1$ by \cite{Lw}. 
By \cite[the proof of Lemma 10.12]{TZ04}, $u$ is $G$-conjugate to $u^l$ if $l\equiv 1\pmod 5$. In particular, the elements  $u,u^6$ are conjugate. As $u  (u^6)^4 = u^{25} = 1$, we have $e(u)\leq 5$.
 
If $u=1$ then $g=s$ is real by \cite[Proposition 3.11(ii)]{TZ05}, hence $e(g)=2$. So we assume $s\neq 1$, $u\neq 1$, in particular, $s$ is not regular. 

Let $\mathbf{G}$ be the simple algebraic group of type $ E_8$, and let
$\sigma$ be the Frobenius endomorphism of $\mathbf{G}$ such that $G=C_{\mathbf{G}}(\sigma)$. Note that this group is usually denoted by $\mathbf{G}^\sigma$ in the literature, and we use this notation below whenever it is convenient. Obviously, we have $C_G(s)=C_{\mathbf{G}}(s)^\sigma$. We can apply Lemma \ref{u26} 
as soon as we observe that 
$C_{\mathbf{G}}(s)^\sigma$
has no composition factor of type $E_8$ and $C_G(s)$ has no Suzuki group composition factor.  The former is true by Lemma \ref{e88}. 

Observe that the non-abelian composition factors of $C_G(s)$ are simple groups of Lie type in defining characteristic $5$, none of which is a Suzuki group ${}^2B_2(2^{2k+1})$, 
the abelian composition factors are $5'$-groups. This facts follow for instance
from  \cite[Lemma 2.2]{TZ04}. (In the  degenerate case with 
  $k=0$ the group  ${}^2B_2(2)$ is not simple  \cite[Table 24.1, p. 208]{MTe}.) 
Now, by  Lemma \ref{u26}, we have $e_{C_G(s)}(u)\leq 3$ by Corollary \ref{cor881}.
Therefore,  the product of at most $3$ conjugates of $u$ in $C_G(s)$ lies in $\langle s\rangle$. As $s$ is real \cite[Proposition  3.1(ii)]{TZ05}, it follows that $e(g)\leq 6$.    
\end{proof}

\section{Proofs of Theorems \ref{tt2}, \ref{tt3}, \ref{ag9}, \ref{u55}, \ref{s67} and Proposition \ref{uuu3}.} 

\begin{proof}[Proof of Theorem {\rm \ref{tt2}}]
Item $(1)$ follows from Lemma \ref{ma3}, item $(2)$ follows from Theorem \ref{sporadicthm}, item $(3)$ follows from \cite[Theorem 4.2, p.240]{AH}, item $(4)$ follows from Proposition \ref{s4q}, item $(5)$ follows from Lemmas \ref{g24} and \ref{ma3}, 
item $(6)$ follows from Theorem\ref{sf4} and Lemma \ref{2sf4},
item $(7)$ follows from Lemma \ref{ma3}, item $(8)$ follows from Lemmas \ref{ma3} and \ref{u3q1}, item $(9)$ follows from Lemmas \ref{6ps}, \ref{s25} and \ref{s53}.\end{proof}

 \begin{proof}[Proof of Theorem {\rm \ref{tt3}}]
By Table \ref{Table_classical}, $e(g)$ does not exceed the value indicated in the theorem. The opposite inequality is proven for items  (1),(2) in Corollary \ref{boundsPSL}. Items (3),(4) follow from Corollary \ref{oo7}.\end{proof}

\begin{proof}[Proof of Theorem {\rm \ref{ag9}}] See  the references in Tables 1,2.
\end{proof}

\begin{proof}[Proof of Theorem {\rm \ref{u55}}]
Item (1) follows from Lemma \ref{u5n}, Corollary \ref{cor881} and Theorem \ref{ue67}. 
Items (2),(3) follow from \cite[Theorem 4.1 in Chapter 4]{AH} and Theorems \ref{sf4} and \ref{ue67}. Item (5) is exactly \cite[Theorem 1.4]{TZ05}. Recall that a good prime is a prime that is not bad; there are no bad primes for the root system $A_r$; the only bad prime for $B_r$, $C_r$, $D_r$ is $2$; the bad primes for $E_6$, $E_7$, $F_4$, $G_2$ are $2,3$; the bad primes for $E_8$ are $2,3,5$; see \cite[Page 178]{Sbo}.
\end{proof}

\begin{proof}[Proof of Theorem {\rm \ref{s67}}] 
For item (1) see Table \ref{Table_classical} and  Theorem \ref{gg6}, Lemmas \ref{33s} and \ref{44s} for $G=\PSL_n(q)$ and $q=2,3,4$, respectively. Item (2) follows from Lemmas \ref{u5n}, \ref{5eg}, \ref{un2} and \ref{u42}. Items (3),(4) follow from Theorems \ref{orth4} and \ref{666}, respectively.
\end{proof}

\begin{proof}[Proof of Proposition {\rm \ref{uuu3}}]
Item $(1)$ follows from Theorem \ref{gg6}, item $(2)$ follows from Lemma \ref{33s}, item $(3)$ follows from Lemma \ref{pu3}.
\end{proof}

Acknowledgements. We are grateful to Frank L\"ubeck for sending to us his computation of minimal order semisimple elements in finite simple groups of types $E_6(q)$ and ${}^2E_6(q)$.

\end{document}